\documentclass{amsart}
\usepackage{graphicx}
\usepackage{xcolor,tikz}
\usepackage{amsthm,amssymb,mathrsfs,mathtools}
\usepackage{hyperref}
\usepackage{cleveref}
\usepackage{bbm}
\hypersetup{
    colorlinks,
    linkcolor={red!50!black},
    citecolor={blue!50!black},
    urlcolor={blue!80!black}
}

\usepackage{todonotes}

\newtheorem{theorem}{Theorem}[section]
\newtheorem*{theoremA}{Theorem}
\newtheorem{theoremM}{Theorem}

\newtheorem{lemma}[theorem]{Lemma}
\newtheorem{proposition}[theorem]{Proposition}
\newtheorem{corollary}[theorem]{Corollary}

\theoremstyle{remark}
\newtheorem{remark}[theorem]{Remark}
\theoremstyle{definition}
\newtheorem{definition}[theorem]{Definition}

\newtheorem{example}[theorem]{Example}
\newtheorem{notation}[theorem]{Notation}

\newtheorem*{disclaimer}{Disclaimer}

\def\orp{\mathscr{O}}

\newcommand{\eqehr}[3]{
\operatorname{EE}_{#1} ( {#2}, {#3} )
}
\newcommand{\ehr}[2]{
\operatorname{Ehr}_{#1} ( {#2} )
}
\newcommand{\hilb}[2]{
\operatorname{Hilb}({#1}, {#2} )
}
\newcommand{\eqhilb}[3]{
\operatorname{EHilb}_{#1}( #2 , #3 )
}

\def\Lip{\operatorname{Lip}}

\def\homchar{\mathfrak{b}}

\newcommand{\supp}[1]{\vert #1 \vert}
\newcommand{\usc}[1]{\underline{ #1 }}
\newcommand{\gl}[2]{\operatorname{GL}_{#1}(#2)}
\newcommand{\bij}[1]{\operatorname{Bij}( #1 )}

\newcommand{\basering}[1]{\operatorname{R}_{#1}}
\newcommand{\classf}[2]{\operatorname{C}_{#2}(#1)}
\newcommand{\conv}[1]{\operatorname{conv} #1 }
\newcommand{\Aut}[1]{\operatorname{Aut} #1 }
\newcommand{\FF}[1]{\mathcal F #1 }
\newcommand{\field}[1]{\mathbb K #1 }
\newcommand{\sgn}{\operatorname{sgn}}

\renewcommand{\AA}{\mathscr{A}}
\newcommand{\TT}{\mathscr{T}}
\newcommand{\fchar}{\mathfrak{f}}
\newcommand{\hchar}{\mathfrak{h}}
\newcommand{\mchar}{\mathfrak{Mon}}
\newcommand{\Oc}{\mathcal{O}}

\def\tW{\widetilde{W}}
\def\AW{\widetilde{\AA}_W}
\newcommand{\AWA}[1]{\widetilde{\AA}_{A_{#1}}}

\title[Equivariant Hilbert and Ehrhart series under translative actions]{Equivariant Hilbert and Ehrhart series\\under translative group actions}
\author{Alessio D'Al\`i}
\address{Dipartimento di Matematica, Politecnico di Milano, Italy}
\email{alessio.dali@polimi.it}
\author{Emanuele Delucchi}
\address{IDSIA-DTI, University of applied arts and sciences of Southern Switzerland, Lugano, Switzerland}
\email{emanuele.delucchi@supsi.ch}
\subjclass[2020]{Primary: 05E18; Secondary: 52B20, 13F55, 05C15.}
\keywords{Equivariant Ehrhart series, equivariant Hilbert series, lattice polytope, group action, triangulation, coloring, order polytope, alcoved polytope, translative action}

\newcommand{\defil}[1]{{\em #1}}

\begin{document}

\begin{abstract}
We study representations of finite groups on Stanley--Reisner rings of simplicial complexes and on lattice points in lattice polytopes. The framework of translative group actions allows us to use the theory of proper colorings of simplicial complexes without requiring an explicit coloring to be given.

We prove that the equivariant Hilbert series of a Cohen--Macaulay simplicial complex under a translative group action 
admits a rational expression whose numerator is a positive integer combination of irreducible characters. This implies an analogous rational expression for the equivariant Ehrhart series of a lattice polytope with a unimodular triangulation that is invariant under a translative group action.

As an application, we study the equivariant Ehrhart series of alcoved polytopes in the sense of Lam and Postnikov and derive explicit results in the case of order polytopes and of Lipschitz poset polytopes.
\end{abstract}

\maketitle

\setcounter{tocdepth}{1}

\section{Introduction}

The {\em Ehrhart series} of a polytope $P$ whose vertices are in a lattice $M\subseteq \mathbb R^d$ is the formal power series
\begin{equation} \label{eq:non-equivariant Ehrhart}
\ehr{P}{t} = 1 + \sum_{m\geq 1} \vert mP \cap M \vert \cdot t^m
\end{equation}
where the degree $m$ coefficient counts the lattice points in the $m$-th dilation of $P$.

The {\em Hilbert series} of a finite nonempty simplicial complex $\Sigma$ is the Hilbert series of the (complex)\footnote{All statements about the Hilbert series of a Stanley--Reisner ring work over an arbitrary field $\mathbb{K}$; we choose $\mathbb{K} = \mathbb{C}$ for representation-theoretic reasons.} Stanley--Reisner ring of $\Delta$, i.e., of the quotient 
$$\mathbb C[x_v\mid v\in V(\Sigma)]/J_\Sigma$$
of the complex polynomial ring with variables indexed by vertices of $\Sigma$, where $J_\Sigma$ is the ideal generated by all monomials $x_{v_1}\cdots x_{v_k}$ such that $\{v_1,\ldots,v_k\}$ is not the set of vertices of a simplex in $\Sigma$. As an immediate consequence of the definition, the Stanley--Reisner ring $\mathbb{C}[\Sigma]$ admits a $\mathbb{C}$-basis consisting of those monomials supported on simplices of $\Sigma$, whence the alternative name ``face ring'' for $\mathbb{C}[\Sigma]$. One then has that

\begin{equation} \label{eq:non-equivariant Hilbert}
\mathrm{Hilb}(\mathbb{C}[\Sigma], t) = 1 + \sum_{i \geq 1}\dim_{\mathbb{C}}\mathbb{C}[\Sigma]_i \cdot t^i = 1 + \sum_{i \geq 1}|\mathrm{Mon}^i(\mathbb{C}[\Sigma])| \cdot t^i,
\end{equation}

\noindent where $\mathrm{Mon}^i(\mathbb{C}[\Sigma])$ denotes the set of monomials of $\mathbb{C}[\Sigma]$ of degree $i$.

The Ehrhart series of a lattice polytope and the Hilbert series of any of its unimodular lattice triangulations (if one exists) are linked as follows.

\begin{theoremA}[Betke--McMullen {\cite{BetkeMcMullen}}] Let $P$ be a lattice polytope and let $\Sigma$ be a unimodular lattice triangulation of $P$. Then
$$
\ehr{P}{t} = \hilb{\mathbb{C}[\Sigma]}{t}.
$$
\end{theoremA}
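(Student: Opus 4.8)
The plan is to prove the identity coefficient by coefficient: since both sides of the asserted equation have constant term $1$ by \eqref{eq:non-equivariant Ehrhart} and \eqref{eq:non-equivariant Hilbert}, it suffices to check that for every $m\geq 1$ the number $|mP\cap M|$ equals the number of degree-$m$ monomials of $\mathbb{C}[\Sigma]$, which by the face-ring basis recalled above is exactly $\dim_{\mathbb{C}}\mathbb{C}[\Sigma]_m$. The bridge between the two counts is the decomposition of $P$ furnished by the triangulation: every point of $P$ lies in the relative interior of a unique simplex of $\Sigma$, so $P=\bigsqcup_{\emptyset\neq F\in\Sigma}\operatorname{relint}(F)$, and scaling by $m$ (using $\operatorname{relint}(mF)=m\cdot\operatorname{relint}(F)$) gives $mP=\bigsqcup_{\emptyset\neq F\in\Sigma}\operatorname{relint}(mF)$, hence $mP\cap M=\bigsqcup_{\emptyset\neq F\in\Sigma}\bigl(\operatorname{relint}(mF)\cap M\bigr)$. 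I would also remark at the outset that one may assume $P$ is full-dimensional in $\mathbb{R}^d$ (otherwise replace $\mathbb{R}^d,M$ by the affine hull of $P$ and the induced lattice), which makes the notion of a unimodular simplex cleanest to state.

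Next I would fix a simplex $F\in\Sigma$ with vertex set $\{v_0,\dots,v_k\}$ and set up a bijection between lattice points of $\operatorname{relint}(mF)$ and degree-$m$ monomials whose support equals $F$. In one direction, given nonnegative integers $a_0,\dots,a_k$ with $\sum_i a_i=m$, the point $\sum_i a_i v_i$ lies in $mF$, and it lies in $\operatorname{relint}(mF)$ precisely when all $a_i>0$; the map $\prod_i x_{v_i}^{a_i}\mapsto\sum_i a_i v_i$ is injective because $v_0,\dots,v_k$ are affinely independent, so a point of $mF$ has uniquely determined barycentric coordinates. In the other direction — and this is the only place the unimodularity hypothesis enters — if $x\in mF\cap M$, then $x-mv_0$ lies both in $M$ and in the linear span of $v_1-v_0,\dots,v_k-v_0$; since $F$ is unimodular these differences form a $\mathbb{Z}$-basis of $M\cap\operatorname{span}(v_1-v_0,\dots,v_k-v_0)$, so $x-mv_0=\sum_{i=1}^k a_i(v_i-v_0)$ with $a_i\in\mathbb{Z}$, and putting $a_0:=m-\sum_{i\geq 1}a_i$ we get $x=\sum_i a_i v_i$ with all $a_i\in\mathbb{Z}$; nonnegativity of the $a_i$ follows from $x\in mF$, and $x\in\operatorname{relint}(mF)$ forces $a_i>0$ for all $i$. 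This exhibits the desired per-simplex bijection.

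Finally I would sum this bijection over all nonempty faces $F\in\Sigma$: by the disjoint decomposition of $mP\cap M$ above, the right-hand side of the sum totals $|mP\cap M|$, while the left-hand side totals the number of all degree-$m$ monomials of $\mathbb{C}[\Sigma]$ (each such monomial has a well-defined support, necessarily a face of $\Sigma$), i.e.\ $\dim_{\mathbb{C}}\mathbb{C}[\Sigma]_m$. Comparing with \eqref{eq:non-equivariant Ehrhart} and \eqref{eq:non-equivariant Hilbert} concludes the argument. I expect the only step demanding genuine care to be the surjectivity half of the per-simplex bijection, namely the passage from a lattice point of a dilated unimodular simplex to its integral barycentric coefficients; the decomposition of $mP$ and the monomial bookkeeping are routine.
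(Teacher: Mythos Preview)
Your argument is correct and is essentially the same as the one the paper uses. Note that the paper does not give a standalone proof of the classical Betke--McMullen theorem (it is cited from \cite{BetkeMcMullen} and \cite[Theorem 10.3]{BeckRobins}); however, the paper proves its equivariant generalization in \Cref{thm:EBM}, and the bijection $f_m\colon p\mapsto x_{v_1^p}^{\lambda_1^p}\cdots x_{v_k^p}^{\lambda_k^p}$ constructed there---via the unique simplex of $\Delta$ whose dilation contains $p$ in its relative interior, together with unimodularity forcing the barycentric coefficients to be positive integers---is exactly your per-simplex bijection summed over faces. The only cosmetic difference is that the paper embeds $P$ at height $1$ in $\widetilde{M}=M\oplus\mathbb{Z}$ so that the condition $\sum_i\lambda_i^p=m$ is encoded by the last coordinate, whereas you keep track of it directly.
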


If a group $G$ acts on the given lattice polytope, respectively simplicial complex, equivariant analogues of the generating functions in \eqref{eq:non-equivariant Ehrhart} and \eqref{eq:non-equivariant Hilbert} are defined by replacing their coefficients by effective elements of the ring $\basering{G}$ of (complex) virtual characters of the group, see \Cref{sec:representation preliminaries}, \Cref{def:eqehr} and \Cref{subsec:eqHilb in general}.

This point of view led Stapledon to lay the foundations of \defil{equivariant Ehrhart theory} in \cite{Stapledon}. Since then, this topic has been investigated by several authors, see \cite{ASuV}, \cite{AScV}, \cite{CHK}. In particular, Elia, Kim and Supina \cite{EKS} developed methods for computing equivariant Ehrhart series via special subdivisions of the given polytope. 
The object of our paper is an equivariant version of the above-mentioned Betke-McMullen theorem (see the Disclaimer below) and its implications for the structure of the equivariant analogues of Ehrhart and Hilbert series. Our main result in this sense is \Cref{TMB} below, which we obtain as a consequence of a more general statement about group actions on simplicial complexes.

\medskip

The study of group actions on posets and simplicial complexes is a classical topic, see for instance work of Stanley \cite{StanleyActions}, Garsia and Stanton \cite{GarsiaStanton}. In particular, Stembridge \cite{StembridgeWeyl} introduced the notion of {\em proper} group actions on simplicial complexes.
More recently, Adams and Reiner \cite{AdamsReiner} have studied the equivariant Hilbert series of an abstract simplicial complex, focusing in particular on the case where the group action preserves a proper coloring (see \Cref{df:propercoloring}) of the complex itself. 

An action of a group $G$ on a partially ordered set is called {\em translative} if it satisfies a simple local condition (see \Cref{def:translative}). 
The word ``translative'' was introduced in \cite{DelucchiRiedel} in the context of hyperplane arrangements that are preserved by a group of translations, as an abstraction of the induced action on the arrangement's poset of intersections. 
Such group actions have appeared in various contexts in the literature, see \Cref{rem:translative}.  In the case of posets of faces of a simplicial complex, we observe that translative actions coincide with actions that preserve some proper coloring of the complex (\Cref{lem:translative action on sc}). Moreover, translativity is a strictly stronger condition than properness in the sense of Stembridge (\Cref{lem:translative implies proper} and \Cref{translativeisstronger}). Our first main result is the following characterization of translativity in terms of linear systems of parameters.

\begin{theoremM}[{see \Cref{prop:translativity via lsop} below}]\label{TMA}
Let $\mathbb{K}$ be an infinite field. The action of a group $G$ on a finite nonempty simplicial complex $\Sigma$ is translative if and only if the ring $\field[\Sigma]$ has a linear system of parameters $\{\theta_i\}_i$ where each $\theta_i$ lies in the invariant subring $\field[\Sigma]^G$.
\end{theoremM}

As a consequence, when $\Sigma$ is a Cohen--Macaulay complex and $G$ acts translatively on it, the equivariant Hilbert series of $\Sigma$ has a rational expression whose numerator is a polynomial with \emph{effective} coefficients (\Cref{thm:effective numerator for SR}). An equivariant version of the aforementioned Betke--McMullen theorem enables us to translate these considerations into the context of Ehrhart theory, obtaining the following result (see \Cref{subsec:eqHilb_SC_CM} and \Cref{subsec:eqHilb_colorful} for the definition of $\hchar_i$ and $\hchar_S$).

\begin{theoremM}[{see {\Cref{thm:EBM}} below}]\label{TMB}
Let $M \subseteq \mathbb{R}^d$ be a lattice and let $P \subseteq \mathbb{R}^d$ be a $d$-dimensional $M$-lattice polytope with a unimodular lattice triangulation $\Delta$. Let $\rho\colon G \to \mathrm{Aff}(M)$ be an action of a finite group $G$ by affine transformations of $M$ preserving $\Delta$. Then the following equality holds in $\basering{G}[[t]]$:
\begin{equation} \label{eq:intro_ehrhart_hilbert}
\eqehr{\rho}{P}{t} = \eqhilb{\rho}{\mathbb{C}[\Delta]}{t}.
\end{equation}
Assume further that the action $\rho$ is translative on $\Delta$ and let $\gamma\colon V(\Delta) \to \Gamma$ be a proper coloring of $\Delta$ preserved by $\rho$. Then 
\begin{equation} \label{eq:intro_translative_ehrhart}
    \eqehr{\rho}{P}{t} = \frac{\sum_{S \subseteq \Gamma} \hchar^{\Delta}_{S} t^{|S|}}{(1-t)^{|\Gamma|}} = \frac{\sum_{i=0}^{d+1}\hchar_i^{\Delta}t^i}{(1-t)^{d+1}},
\end{equation}
where each $\hchar^{\Delta}_i$ (but not necessarily each $\hchar^{\Delta}_S$) is effective. If moreover $|\Gamma|=d+1$, i.e.,~if $\gamma$ is balanced, then each $\hchar^{\Delta}_S$ in \eqref{eq:intro_translative_ehrhart} is effective.
\end{theoremM}

\medskip

Even in its most restrictive form, \Cref{thm:EBM} applies to some natural and important examples of polytopes and affords explicit computations. 
A notable example is given by {\em order polytopes} of posets and, more generally, {\em alcoved polytopes}.

\medskip

The {\emph{order polytope}} $\orp(X)$ associated with a partially ordered set $X$ has been defined by Stanley \cite{StanleyTwo} and has received wide attention ever since. 
We show that every poset automorphism of $X$ induces a translative action on the canonical regular unimodular triangulation of $\orp(X)$ (see \Cref{sec:OP}), and such an action preserves a balanced coloring of the triangulation (\Cref{lem:translative_order_polytope}). We give a formula for the equivariant Ehrhart series of $\orp(X)$ 
and we illustrate our result by some explicit computations (see \Cref{ex:radiotower} and \Cref{prop:joinorder}).

\begin{theoremM} [{see {\Cref{thm:orderp}} below for a more detailed statement}] \label{TMC}
Let $G$ be a group of automorphisms of a finite poset $X$. Then the equivariant Ehrhart series of $\orp(X)$ coincides with the equivariant Hilbert series of its canonical triangulation. Moreover, all entries of the equivariant flag $h$-vector are effective.
\end{theoremM}

\medskip

{\em Alcoved polytopes} have been introduced by Lam and Postnikov \cite{LaPo} as $\mathbb Z^d$-lattice polytopes that are triangulated by a certain standard simplicial subdivision of $\mathbb R^d$ (see \Cref{eq:Ad}). The same authors in \cite{LaPo2} later extended the theory to polytopes that are triangulated by the ``alcoves'' of any affine Coxeter system (see \Cref{def:Walcoved} for a precise definition), with the previous case corresponding to the Coxeter case $A_d$ via an affine isomorphism that we describe in \Cref{twoalcoves}. In \Cref{sec:alcoved} we show that, if $P$ is an alcoved polytope that is invariant by the action of some subgroup of the associated affine Coxeter group, then the induced action on the alcoved triangulation of $P$ satisfies the most restrictive hypotheses of \Cref{thm:EBM} and thus, for instance, has effective equivariant $h$- and flag $h$-vectors.

\begin{theoremM} [{see {\Cref{thm:Walcove}} below for a more detailed statement}] \label{TMD}
Let $P$ be an alcoved polytope of type $W$ and let $G$ be a subgroup of $\tW$ preserving $P$. Then the equivariant Ehrhart series of $P$ equals the equivariant Hilbert series of the alcoved triangulation of $P$ (with respect to the induced $G$-action). Morever, all entries of the equivariant flag $h$-vector are effective.
\end{theoremM}

Even though order polytopes are a special case of alcoved polytopes (see \Cref{rem:ORPA}), we decided to treat the former separately for the benefit of those readers not familiar with the Coxeter viewpoint.

Another special class of alcoved polytopes associated with posets is that of {\emph{Lipschitz polytopes of posets}} in the sense of Sanyal and Stump \cite{SaSt}. Again, every automorphism of the poset $X$ induces a translative action on the triangulation of the Lipschitz polytope of $X$ by alcoves, preserving a natural balanced coloring. In \Cref{ex:lips} we exemplify how the equivariant Ehrhart series of a poset Lipschitz polytope  can be computed via \Cref{thm:EBM}, verifying the effectiveness of the associated equivariant $h$- and flag $h$-vectors. 

\medskip

Our paper is organized as follows. In \Cref{sec:lpt} we lay out our setup on lattice polytopes and triangulations, we define translative and proper group actions and relate them to other notions of group actions in the literature. \Cref{sec:eHS} introduces the concept of equivariant Hilbert series, while \Cref{sec:eHS_SR} specializes it to the context of Stanley--Reisner rings, proving \Cref{TMA} and its consequences for the case of Cohen--Macaulay complexes. In \Cref{sec:main} we recall some basics of equivariant Ehrhart theory and relate the equivariant Ehrhart series of a lattice polytope and the equivariant Hilbert series of a unimodular triangulation under suitable hypotheses, in particular proving \Cref{TMB}. \Cref{sec:OP} is devoted to the study of the equivariant Ehrhart series of order polytopes under an action of a finite group via poset automorphisms and contains \Cref{TMC} among other results. Finally, in \Cref{sec:alcoved} we apply our results to alcoved polytopes, proving in particular \Cref{TMD}, and carry out some explicit computations in the case of Lipschitz polytopes of posets.

\medskip

\begin{disclaimer}
 An earlier version of this manuscript proved that \eqref{eq:intro_ehrhart_hilbert} holds under some additional hypothesis on the action $\rho$, see \Cref{rem:alternative proof of eBM}. We subsequently learned that the more general version stated above was proved in unpublished notes by Katharina Jochemko, Lukas Katth\"an and Victor Reiner. We thank them for sharing their unpublished work and allowing us to include their proof.

  Moreover, as the present paper was in preparation, a preprint by Alan Stapledon \cite{StapledonNew} appeared on arXiv. After some friendly exchanges, the updated versions of both our paper and Stapledon's make use of each other's results. In particular, Stapledon obtained an even more general form of \eqref{eq:intro_ehrhart_hilbert}, see \cite[Proposition 4.40, Remark 4.41]{StapledonNew} and  \Cref{rem:disclaimer} below.

\end{disclaimer}

\noindent {\bf Acknowledgements.} We thank Alan Stapledon for friendly discussions on his latest work \cite{StapledonNew} and catching some mistakes in an earlier draft, Victor Reiner for sharing his unpublished work with Katharina Jochemko and Lukas Katth\"an, the anonymous referee and Matt Beck for their valuable comments. We also thank Mariel Supina for introducing us to the topic with her talk at the workshop {\em Combinatorial and Algebraic Aspects on Lattice Polytopes} at Kwansei Gakuin University, and we are grateful to the organizers of this workshop for their support. AD is a member of INdAM--GNSAGA and has been partially supported by the PRIN 2020355B8Y grant ``Squarefree Gr\"obner degenerations, special varieties and related topics'' and by the INdAM--GNSAGA project ``New theoretical perspectives via Gr\"obner bases'', CUP E53C22001930001.

\newpage
\tableofcontents

\section{Lattice polytopes, simplicial complexes and group actions}\label{sec:lpt}

Throughout this paper we fix an integer $d>0$ and let $M$ denote a rank $d$ discrete subgroup of $\mathbb R^d$.

\subsection{Definitions}
A convex polytope $P$ is the convex hull of finitely many points in $\mathbb R^d$. The set of vertices of a polytope $P$ will be denoted by $V(P)$. If $V(P)\subseteq M$, we call $P$ a \defil{lattice polytope}. A geometric simplex is the convex hull of any set of affinely independent points. A lattice simplex $S=\operatorname{conv}\{x_0,x_1,\ldots,x_k\}$ is called \defil{unimodular} if the vectors $x_1-x_0,\ldots,x_k-x_0$ form a lattice basis of $\{\sum_{i=0}^k \lambda_i x_i \mid \lambda_0+\ldots +\lambda_k = 1\} \cap M$ (i.e., of the subset of $M$ contained in the affine span of $S$) \cite[\S 10.1]{BeckRobins}.

A \defil{geometric simplicial complex} in $\mathbb R^d$ is a set $\Delta$ of geometric simplices with the properties that $\Delta$ contains every face (including the empty face) of any of its elements, and that the intersection of any two simplices in $\Delta$ is a face of both \cite[\S 12.1]{TopologicalMethods}. The family $\Delta$ ordered by inclusion is a partially ordered set denoted by $\FF(\Delta)$ and called the \emph{poset of faces} of $\Delta$. Write $V(\Delta)=\bigcup_{\sigma\in \Delta}V(\sigma)$ for the set of vertices of $\Delta$.  
The support of a geometric simplicial complex $\Delta$ is the space $\supp{\Delta}:=\bigcup_{\sigma\in \Delta} \sigma \subseteq \mathbb R^d$. 
A geometric simplicial complex $\Delta$ is a {\em triangulation} of the polytope $P$ if $\supp{\Delta}=P$. A \defil{lattice triangulation} of a lattice polytope $P$ is any triangulation of $P$ whose vertices are points of $M$. A lattice triangulation $\Delta$ is \defil{unimodular} if every $\sigma\in \Delta$ is unimodular.

An \defil{abstract simplicial complex} $\Sigma$ on a finite set $W$ is any family of subsets of $W$ that is closed under taking subsets. A {\em subcomplex} of $\Sigma$ is any abstract simplicial complex $\Sigma'$ such that $\Sigma'\subseteq\Sigma$. The vertex set of $\Sigma$ is 
$V(\Sigma):=\bigcup_{\sigma\in \Sigma} \sigma
\subseteq W
$ and the poset of faces $\FF(\Sigma)$ is the  family $\Sigma$ itself, partially ordered by inclusion. The dimension of $\sigma\in \Sigma$ is $\dim(\sigma):=\vert \sigma\vert -1$, and the dimension of $\Sigma$ is the maximum of the dimensions of its elements. 
With every geometric simplicial complex $\Delta$ is associated an underlying abstract simplicial complex
$
\usc{\Delta} := \{V(\sigma) \mid \sigma\in \Delta\}
$. Note that $V(\Delta)=V(\usc{\Delta})$.

We say that an abstract simplicial complex $\Sigma$ is \emph{Cohen--Macaulay} over the field $\mathbb{K}$ if $\widetilde{H}_i(\mathrm{lk}_{\Sigma}(F), \mathbb{K}) = 0$ for every $F \in \Sigma$ and for every $i < \dim(\mathrm{lk}_{\Sigma}(F))$. Here $\mathrm{lk}_{\Sigma}(F)$ is the \emph{link} of $F$ in $\Sigma$, i.e.~the subcomplex of $\Sigma$ defined as $\{G \in \Sigma \mid F \cap G = \emptyset, \ F \cup G \in \Sigma\}$. If $\Sigma$ is Cohen--Macaulay over $\mathbb{K}$, then it is pure, i.e.~all facets of $\Sigma$ have the same cardinality.

\begin{definition} \label{df:propercoloring}
A \emph{proper coloring} of an abstract simplicial complex $\Sigma$ with values in the set of colors $\Gamma$ is any function $\gamma: V(\Sigma)\to\Gamma$ with the property that $\gamma(x)\neq \gamma(y)$ for every simplex $\sigma\in \Sigma$ and every pair of distinct elements $x,y\in \sigma$. If there exists a proper coloring with as few colors as possible, i.e.~$|\Gamma| = \dim(\Sigma)+1$, the complex $\Sigma$ is said to be \emph{balanced}.
\end{definition}

\subsection{Group actions}

An {\em affine transformation} of a $d$-dimensional $\mathbb{K}$-vector space $V$ is any function $f\colon V\to V$ of the form $f(x)= Ax + b$ with $A\in \mathrm{GL}(V)$ and $b\in V$. The set $\operatorname{Aff}(V)$ is a group under composition.

An action of a group $G$ on $\mathbb R^d$ by {affine transformations} is a group homomorphism $\rho:G\to\operatorname{Aff}(\mathbb R^d)$.
We say that the action $\rho$ preserves a geometric simplicial complex $\Delta$ if $\rho(g)(\Delta)=\Delta$ for all $g\in G$. Given an action of $G$ on $\Delta$, for every $g\in G$ we define $$\Delta^g:=\{\sigma\in \Delta \mid \rho(g)(\sigma)=\sigma\},$$ the set of simplices of $\Delta$ that are (setwise) fixed by $g$.

Given a polytope $P$ in $\mathbb R^d$ and $g\in G$, let
$$
P^g:=\{x\in P \mid \rho(g)(x)=x\}
$$
denote the set of fixed points of $g$ contained in $P$.

An action of $G$ on an abstract simplicial complex $\Sigma$ is a group homomorphism $\rho: G \to \bij{V(\Sigma)}$ into the symmetric group on $V(\Sigma)$ such that $\rho(g)(\Sigma)=\Sigma$ for all $g\in G$. For $g\in G$ let
$$\Sigma^g:=\{\tau\in \Sigma \mid \rho(g)(\tau)=\tau\}.$$ Any group action on a geometric simplicial complex $\Delta$ induces an action on its underlying abstract simplicial complex $\usc{\Delta}$. In this case, we write $\usc{\Delta}^g$ for $(\usc{\Delta})^g = \usc{(\Delta^g)}$.

An action of $G$ on a partially ordered set $X$ is a homomorphism $G\to \Aut(X)$ from $G$ into the group of automorphisms of $X$. (Recall that a poset automorphism is an order-preserving self-map with an order-preserving inverse. For basics on posets, we refer the reader to \cite[Chapter 3]{StanleyEnumerative}.) For $x,y\in X$ we define the set $x\vee y$ of all minimal upper bounds of $x$ and $y$ and the set $x\wedge y$ of all minimal upper bounds of $x$ and $y$. If $x\wedge y =\{z\}$ is a singleton, we abuse notation writing simply $x\wedge y = z$, and similarly for $x\vee y$.

\subsection{Translative and proper actions}

We now need to introduce \emph{translative} and \emph{proper} group actions. The reader interested in color-preserving actions on simplicial complexes will find a strong connection to translativity in \Cref{lem:translative action on sc}.

\begin{definition}[Translative and proper actions] \label{def:translative}
An action of a group $G$ on a partially ordered set $X$ is called:
\begin{itemize}
    \item \emph{translative} if, for every $g\in G$ and every $x\in X$, $gx\vee x\neq\emptyset$ implies $gx=x$;
    \item \emph{proper} if, for every $g \in G$ and every $x \in X$, $gx = x$ implies that $gy = y$ for every $y \leq x$.
\end{itemize}

An action of $G$ on a simplicial complex $\Sigma$ is called translative (respectively, proper) if the induced action on $\FF(\Sigma)$ is translative (respectively, proper).
\end{definition}

\begin{remark}\label{rem:translative} Translative actions on posets arose from the study of matroid structures related to toric arrangements \cite{DelucchiRiedel}, and were further investigated in \cite{DaliDelucchi} (in the context of invariant rings of Stanley--Reisner rings) and in \cite{BibbyDelucchi} (in connection to supersolvability of geometric posets).
   If $X$ is the poset of faces of a simplicial fan, proper actions are Stembridge's proper actions \cite{StembridgeWeyl}. If $X$ is any poset regarded as a small category without loops, any group action in the sense of Bridson and Haefliger \cite[Definition 1.11]{BriHae} is proper.
   \end{remark}

The following lemma addresses the relationship between translative actions and color-preserving automorphisms of a simplicial complex in the sense of \Cref{df:propercoloring}.

\begin{lemma} \label{lem:translative action on sc}
Let $\Sigma$ be a simplicial complex and let $G$ be a group acting on it. Then the action is translative if and only if it preserves a proper coloring of $\Sigma$, i.e.,~there exists a proper coloring $\gamma$ such that $\gamma(gv) = \gamma(v)$ for every $v \in V(\Sigma)$ and $g \in G$.
\end{lemma}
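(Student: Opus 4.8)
The plan is to prove the two implications separately, using the equivalence between the action on $\Sigma$ and the induced action on the face poset $\FF(\Sigma)$, since translativity is defined via the latter.

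\textbf{The easy direction: color-preserving implies translative.} Suppose there is a proper coloring $\gamma\colon V(\Sigma)\to\Gamma$ with $\gamma(gv)=\gamma(v)$ for all $v\in V(\Sigma)$, $g\in G$. I must show that for every $g\in G$ and every face $\sigma\in\FF(\Sigma)$, the condition $g\sigma\vee\sigma\neq\emptyset$ forces $g\sigma=\sigma$. Now $g\sigma\vee\sigma\neq\emptyset$ means there is a face $\tau\in\Sigma$ containing both $\sigma$ and $g\sigma$, hence containing $\sigma\cup g\sigma$ as a subset; since $\Sigma$ is closed under taking subsets, $\sigma\cup g\sigma\in\Sigma$ is itself a face. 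The vertices of $\sigma\cup g\sigma$ must therefore receive pairwise distinct colors under $\gamma$. But $\gamma$ restricted to $\sigma$ and $\gamma$ restricted to $g\sigma$ have the same image (as $\gamma(gv)=\gamma(v)$), and within each of $\sigma$, $g\sigma$ the coloring is injective, so $\sigma$ and $g\sigma$ carry the same multiset — in fact set — of colors, say with $|\sigma|=|g\sigma|=k$ colors. If $v\in\sigma$, then $\gamma(v)=\gamma(gv)$ and $gv\in g\sigma\subseteq\sigma\cup g\sigma$; since the colors on $\sigma\cup g\sigma$ are distinct, the only vertex of $\sigma\cup g\sigma$ with color $\gamma(v)$ is $v$ itself, forcing $gv=v$. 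Thus $g$ fixes every vertex of $\sigma$ pointwise, so $g\sigma=\sigma$.

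\textbf{The harder direction: translative implies color-preserving.} Here I need to manufacture a proper coloring out of thin air and verify it is $G$-invariant. The natural construction is to take $\Gamma$ to be the set of orbits of the action of $G$ on $V(\Sigma)$ and let $\gamma\colon V(\Sigma)\to\Gamma$ send each vertex to its orbit; this is tautologically $G$-invariant. The content is that $\gamma$ is a \emph{proper} coloring, i.e.~no face contains two vertices in the same $G$-orbit. Suppose $x,y$ are distinct vertices of a common face $\sigma\in\Sigma$ with $y=gx$ for some $g\in G$. The two-vertex face $\{x,y\}$ lies in $\Sigma$, and it is an upper bound for the singletons $\{x\}$ and $\{gx\}=\{y\}$ in $\FF(\Sigma)$; hence $g\{x\}\vee\{x\}\neq\emptyset$. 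Translativity then gives $g\{x\}=\{x\}$, i.e.~$y=gx=x$, contradicting $x\neq y$. (One subtlety to flag: translativity is phrased for all faces $x\in X=\FF(\Sigma)$, and here I only need it for singleton faces, which is legitimate; also I should make sure the case $\sigma=\{x,y\}$ being a face is guaranteed by downward closure from any face containing both.) This shows $\gamma$ is proper, completing the proof.

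\textbf{Main obstacle.} I expect no serious obstacle — both directions are short and combinatorial. The one place to be careful is translating fluently between ``$\sigma\vee\tau\neq\emptyset$ in the face poset'' and ``$\sigma\cup\tau$ is a face of $\Sigma$'': for an abstract simplicial complex closed under subsets, $\sigma$ and $\tau$ have a common upper bound if and only if $\sigma\cup\tau\in\Sigma$, and then $\sigma\cup\tau$ is their unique minimal upper bound, so $\sigma\vee\tau=\{\sigma\cup\tau\}$ is actually a singleton join. Making this observation explicit at the start streamlines both halves of the argument. A second minor point is to note that in the forward direction we in fact proved something slightly stronger — that $g$ fixes $\sigma$ pointwise, not merely setwise — which is exactly what is needed and costs nothing extra.
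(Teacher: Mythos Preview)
Your proof is correct and takes essentially the same approach as the paper's: both directions use the orbit coloring for the forward implication and, for the converse, reduce to the observation that $\{v,gv\}$ is a face with two vertices of the same color, forcing $gv=v$. The only cosmetic difference is that you make explicit the equivalence between $\sigma\vee\tau\neq\emptyset$ in $\FF(\Sigma)$ and $\sigma\cup\tau\in\Sigma$, which the paper leaves implicit.
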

\begin{proof}
First of all, note that both translativity of an action and properness of a coloring only depend on the structure of the face poset $\FF(\Sigma)$. Hence, it is irrelevant whether the simplicial complex $\Sigma$ is abstract or geometric.

Assume first that the action of $G$ is translative. Consider the partition of the vertices of $\Sigma$ induced by the orbits of the action of $G$, and assign a different color to each orbit. This coloring is obviously preserved by $G$, and we will now prove that it is proper. Indeed, if $\{v, w\}$ is a monochromatic face of $\Sigma$, it must be that $w = gv$ for some $g \in G$. But then, by translativity, it follows that $gv = v$, and hence $w = v$. Thus the only monochromatic faces are vertices, which proves properness of the coloring.

Now let $\gamma$ be a proper coloring of $\Sigma$ preserved by the action $\rho$. Pick a face $\sigma \in \Sigma$ and an element $g \in G$ such that $g\sigma \vee \sigma$ is nonempty in $\FF(\Sigma)$, i.e,~there exists a face of $\Sigma$ containing both $\sigma$ and $g\sigma$. If $\sigma$ is the empty face of $\Sigma$, there is nothing to prove; assume otherwise. Since $g\sigma \vee \sigma$ is nonempty, for every vertex $v \in \sigma$ one has that $\{v, gv\} \in \Sigma$. Since $\gamma(v) = \gamma(gv)$ and the coloring $\gamma$ is proper, the only possibility is that $gv = v$. Hence, $g\sigma = \sigma$ and the action is translative.
\end{proof}

\Cref{lem:translative action on sc} shows that translativity of an action on a simplicial complex only depends on the 1-skeleton of the complex itself. We now make this observation more precise.

\begin{remark} \label{rem:quotient coloring}
Let $G$ be a finite group acting via simplicial automorphisms on the finite simplicial complex $\Sigma$, let $V$ be the set of vertices of $\Sigma$, and let $\Sigma^{(1)}$ be the 1-skeleton of $\Sigma$ (considered as a graph). We can then consider the graph obtained as the topological quotient $\Sigma^{(1)}/G$, whose vertex set is the set $V/G$ of $G$-orbits of vertices of $\Sigma$, and with an edge with ends at the orbits $\Oc$, $\Oc'$  whenever there exist $v \in \Oc$ and $v' \in \Oc'$ such that $v \neq v'$ and $\{v, v'\} \in \Sigma$.   
In particular, $\Sigma^{(1)}/G$ has one loop at $\Oc$ for every pair of distinct $v$, $v'$ in $\Oc$ such that $\{v, v'\} \in \Sigma$. Proper colorings of $\Sigma^{(1)}/G$ are then in bijection with those proper colorings of $\Sigma$ that are preserved by the action of $G$. In particular, the action of $G$ on $\Sigma$ is translative if and only if the graph $\Sigma^{(1)}/G$ is loopless or, equivalently, colorable. When this is the case, the number of colors of a proper coloring of $\Sigma$ preserved by the $G$-action is bounded above by $|V/G|$ and bounded below by the chromatic number of the graph $\Sigma^{(1)}/G$.
\end{remark}

\begin{corollary}
Let $\Sigma$ be a finite simplicial complex, let $G$ be a finite group acting on it via simplicial automorphisms, and let $k$ be the number of $G$-orbits of vertices of $\Sigma$. If $k \leq \dim(\Sigma)$, then the action of $G$ on $\Sigma$ is not translative. 
\end{corollary}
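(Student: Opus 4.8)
The plan is to argue by contradiction, invoking \Cref{lem:translative action on sc}. Suppose the action of $G$ on $\Sigma$ were translative. Then by that lemma there is a proper coloring $\gamma\colon V(\Sigma)\to\Gamma$ preserved by $G$, i.e.\ $\gamma(gv)=\gamma(v)$ for all $v\in V(\Sigma)$ and $g\in G$. Since $\gamma$ is constant on each $G$-orbit of vertices, it factors through the orbit set $V(\Sigma)/G$, so the image of $\gamma$ has at most $k$ elements; without loss of generality $|\Gamma|\le k$. (This is exactly the upper bound recorded in \Cref{rem:quotient coloring}.)

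Next I would use the definition of dimension: since $\dim(\Sigma)$ is the maximum of $|\sigma|-1$ over $\sigma\in\Sigma$, there exists a simplex $\sigma\in\Sigma$ with $|\sigma|=\dim(\Sigma)+1$. Properness of $\gamma$ forces the $\dim(\Sigma)+1$ vertices of $\sigma$ to receive pairwise distinct colors, so $|\Gamma|\ge\dim(\Sigma)+1$. Combining with the previous paragraph gives $k\ge|\Gamma|\ge\dim(\Sigma)+1$, which contradicts the hypothesis $k\le\dim(\Sigma)$. Hence the action cannot be translative.

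There is no genuine obstacle here: the corollary is an immediate consequence of \Cref{lem:translative action on sc} together with the elementary fact that a proper coloring of a simplicial complex of dimension $D$ requires at least $D+1$ colors. The only point requiring a modicum of care is that $\dim(\Sigma)$ is defined via a largest face, so that a simplex with exactly $\dim(\Sigma)+1$ vertices really does exist to drive the counting argument.
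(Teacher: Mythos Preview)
Your proof is correct and matches the paper's intended argument. The paper states this corollary without proof, as an immediate consequence of \Cref{lem:translative action on sc} and \Cref{rem:quotient coloring}; your write-up simply makes explicit the two-line counting argument (a $G$-invariant proper coloring uses at most $k$ colors, but any proper coloring needs at least $\dim(\Sigma)+1$).
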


   We now wish to investigate the relationship between translativity and properness, beginning with the following immediate lemma.

\begin{lemma} \label{lem:translative implies proper}
Let $X$ be a partially ordered set and let $G$ be a group acting on $X$. If the action of $G$ is translative, then it is proper.
\end{lemma}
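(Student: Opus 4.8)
The plan is to unwind the two definitions from \Cref{def:translative} directly, working inside the face poset $\FF(X) = X$, so that no distinction between abstract and geometric simplicial complexes needs to be made (as in the proof of \Cref{lem:translative action on sc}). Suppose the action of $G$ on $X$ is translative, and fix $g \in G$ and $x \in X$ with $gx = x$. We must show that $gy = y$ for every $y \leq x$. So fix such a $y$.

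The key observation is that $y$ and $gy$ have a common upper bound, namely $x$ itself: indeed $y \leq x$ by assumption, and applying $g$ (which is a poset automorphism) to $y \leq x$ gives $gy \leq gx = x$. Hence $x$ is an upper bound for the pair $\{y, gy\}$, so in particular the set $gy \vee y$ of minimal upper bounds of $y$ and $gy$ is nonempty. By translativity applied to the element $y$ and the group element $g$, we conclude $gy = y$. Since $y \leq x$ was arbitrary, this shows the action is proper.

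There is essentially no obstacle here: the only thing to be slightly careful about is the logical direction of the definitions — translativity is a statement about $gx \vee x \neq \emptyset$ (a \emph{hypothesis} forcing $gx = x$), while properness is a statement about $gx = x$ (a \emph{hypothesis} forcing $gy = y$ for all $y \leq x$) — and the bridge between them is precisely that a fixed point $x$ of $g$ is automatically a common upper bound of any $y \leq x$ and its image $gy$. One should also note that the empty face (bottom element, if present) is trivially fixed, so no separate case analysis is needed. This is why the lemma is described as ``immediate''; the proof is a two-line application of the definitions.

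\begin{proof}
Since both translativity and properness of the action depend only on the poset structure of $X$, we argue directly in $X$. Assume the action is translative and fix $g \in G$ and $x \in X$ with $gx = x$. Let $y \in X$ with $y \leq x$. Applying the poset automorphism $\rho(g)$ to the relation $y \leq x$ yields $gy \leq gx = x$, so $x$ is a common upper bound of $y$ and $gy$; in particular $gy \vee y \neq \emptyset$. By translativity (applied to $g$ and $y$) we conclude $gy = y$. As $y \leq x$ was arbitrary, the action is proper.
\end{proof}
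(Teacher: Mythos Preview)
Your proof is correct and follows exactly the same approach as the paper: both argue that if $gx = x$ and $y \leq x$, then $gy \leq gx = x$, so $gy \vee y \neq \emptyset$, whence translativity gives $gy = y$.
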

\begin{proof}
Let $g \in G$ and $x \in X$ be such that $gx = x$. Take $y \in X$ such that $y \leq x$. Since $gy \leq gx = x$, the set $gy \vee y$ is nonempty; hence, by translativity, it follows that $gy = y$.
\end{proof}

\begin{remark}\label{translativeisstronger} 
Translativity is strictly stronger than properness; for an example of a proper action failing to be translative, take the ``butterfly'' poset $X$ with elements $x_0, x_1, y_0, y_1$ and covering relations $y_i < x_j$ for every $i,j \in \{0,1\}$. 
The action of $G = \mathbb{Z}_2 = \{1, g\}$ on $X$ defined by $g x_i = x_{1-i}$ and $g y_i = y_{1-i}$ for every $i \in \{0,1\}$ is trivially proper, but it is not translative since $g y_0 \vee y_0 = \{x_0, x_1\}$ is nonempty and $y_0 \neq g y_0 = y_1$.

However, if the group $G$ has exponent two (i.e.,~all the elements of the group $G$ have order at most two) and $X$ is a meet-semilattice (i.e.,~for every pair of distinct elements $x, y \in X$ the set of maximal lower bounds consists of a unique element, the \emph{meet}),
then properness and translativity coincide.
\end{remark}

\begin{lemma}
Let $G$ be a group of exponent two acting on a meet-semilattice $X$. Then the action of $G$ is translative if and only if it is proper.
\end{lemma}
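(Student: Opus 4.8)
The forward implication is \Cref{lem:translative implies proper} and uses no hypothesis on $G$ or $X$, so the plan is to prove only the converse: assuming $G$ has exponent two, $X$ is a meet-semilattice, and the action $\rho$ is proper, I would show it is translative. First I would fix $g\in G$ and $x\in X$ with $gx\vee x\neq\emptyset$ and pick any element $z\in gx\vee x$; by definition $z\geq x$ and $z\geq gx$. The point to exploit is that $gz$ is \emph{also} a common upper bound of $x$ and $gx$: applying the poset automorphism $\rho(g)$ to $z\geq gx$ yields $gz\geq g(gx)=g^2x=x$ (here I use that $G$ has exponent two, so $g^2=1$), and applying it to $z\geq x$ yields $gz\geq gx$.

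Next I would pass to the meet $w:=z\wedge gz$, which exists because $X$ is a meet-semilattice (in the degenerate case $z=gz$ simply set $w:=z$). Since $x$ is a lower bound of both $z$ and $gz$, maximality of the meet gives $x\leq w$, and likewise $gx\leq w$; so $w$ is still a common upper bound of $x$ and $gx$. Crucially, $w$ is $g$-fixed: as $\rho(g)$ is an order-isomorphism it carries the meet of $\{z,gz\}$ to the meet of $\{gz,g^2z\}=\{gz,z\}$, so $gw=gz\wedge z=w$.

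Finally I would invoke properness of the action: from $gw=w$ and $x\leq w$ it follows that $gx=x$. Hence $gx\vee x\neq\emptyset$ forces $gx=x$, which is exactly translativity, completing the proof.

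The argument is short and I do not expect a real obstacle; the only step using the hypotheses is the construction of the $g$-fixed common upper bound $w$, which needs both the exponent-two condition (to see that $gz$, and not just $z$, lies above $x$) and the meet-semilattice structure (to form $w$ and to recognize that it is $g$-invariant). Properness is then used exactly once, at the very end, to push the $g$-invariance of $w$ downward to $x$ — note that this downward direction is the only way properness can be applied, which is precisely why the weaker hypothesis is not enough in general, cf.\ the butterfly example above.
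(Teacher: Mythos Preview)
Your proof is correct and follows essentially the same approach as the paper: pick a common upper bound $z$ of $x$ and $gx$, use the exponent-two hypothesis to see that $gz$ is also a common upper bound, take the meet $w=z\wedge gz$ (which is $g$-fixed since $g$ preserves meets and $g^2=1$), and then apply properness at $w$ to conclude $gx=x$. The paper's argument is identical up to notation (it uses $y$ for your $z$ and does not name the meet).
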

\begin{proof}
By \Cref{lem:translative implies proper}, if the action is translative, then it is proper. Let us now assume that the action is proper. Let $x \in X$ and $g \in G$ be such that $gx \vee x$ is nonempty. We need to prove that $gx = x$. Let $y$ be an element of $X$ such that $x \leq y$ and $gx \leq y$. Since $G$ acts by automorphisms of $X$, it is also true that $g^2x \leq gy$, i.e.~$x \leq gy$, since $g^2 = \mathrm{id}_G$ because of the hypothesis on $G$. Hence, $x$ lies below both $y$ and $gy$, and hence below the meet $gy \wedge y$ (that exists since $X$ is a meet-semilattice). If we can prove that $g(gy \wedge y) = gy \wedge y$, then properness will imply that $x = gx$, as desired. Since $G$ acts by automorphisms of $X$ and $X$ is a meet-semilattice, then the automorphism associated with $g$ must preserve the meet operation. In particular, \[g(gy \wedge y) = g^2y \wedge gy = y \wedge gy = gy \wedge y,\]
which ends the proof.
\end{proof}

\begin{corollary}\label{cor:tranproper}
Let $G$ be a group of exponent two acting on the face poset of a polytopal complex or of a simplicial complex. Then the action of $G$ is translative if and only if it is proper.
\end{corollary}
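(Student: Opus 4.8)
The plan is to deduce this statement as a direct corollary of the preceding lemma, whose hypothesis is that $G$ has exponent two and acts on a \emph{meet-semilattice}. So the only thing to verify is that the face poset $X$ of a polytopal complex (hence also of a simplicial complex, which is the special case in which every cell is a simplex) is a meet-semilattice, where, as always, we include the empty face as the minimum of $X$.

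First I would recall the defining axioms of a polytopal complex: it contains every face of each of its cells, and the intersection of any two cells is a common face of both. In particular, for cells $F$ and $G$ the intersection $F \cap G$ is again a cell of the complex, hence an element of $X$; moreover, for cells the relation ``is a face of'' coincides with set-theoretic containment, since if $H \subseteq F$ with $H, F$ both cells, then $H = H \cap F$ is a face of $F$ by the intersection axiom.

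Next I would check that $F \wedge G = F \cap G$. By the above, $F \cap G \in X$ with $F \cap G \leq F$ and $F \cap G \leq G$. Conversely, if $H \in X$ satisfies $H \leq F$ and $H \leq G$, then $H \subseteq F \cap G$, and since both $H$ and $F \cap G$ are cells, $H \leq F \cap G$ in $X$. Thus $F \cap G$ is the unique maximal common lower bound of $F$ and $G$, i.e.\ their meet. Hence $X$ is a meet-semilattice, and applying the preceding lemma (with this meet-semilattice $X$ and the exponent-two group $G$) concludes the proof.

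There is essentially no obstacle here: once the meet-semilattice property is in place, the argument is a one-line specialization, and that property is immediate from the axioms of a polytopal complex. The only point requiring a modicum of care is the bottom element—one must either include the empty face so that $F \cap G \in X$ even when $F$ and $G$ are disjoint, or restrict to complexes in which any two cells meet—the former being the convention already adopted in the paper for simplicial complexes.
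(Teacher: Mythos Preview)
Your proposal is correct and matches the paper's intended approach: the paper states this corollary without proof, treating it as an immediate consequence of the preceding lemma on exponent-two groups acting on meet-semilattices. You have correctly supplied the one detail the paper leaves implicit, namely that the face poset of a polytopal (or simplicial) complex, with the empty face as minimum, is a meet-semilattice with meet given by intersection.
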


We close this section by proving two properties of proper actions that will come in handy when dealing with equivariant Hilbert and Ehrhart series, see \Cref{prop:eqhilb_proper}.(ii) and \Cref{rem:alternative proof of eBM}. As a slogan, when an action on a simplicial complex $\Sigma$ is proper, we will be able to read off some information about $\Sigma$ by considering all the possible $g$-fixed sets $\Sigma^g$ (that in this case are in fact simplicial complexes) as $g$ ranges over $G$.

\begin{lemma} \label{lem:proper action on sc}
Let $\Delta$ be an abstract (respectively, geometric) simplicial complex and let $G$ be a group acting on it. Then the action is proper if and only if $\Delta^g$ is an abstract (respectively, geometric) simplicial complex for every $g \in G$.
\end{lemma}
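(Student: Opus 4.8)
The plan is to prove both implications directly from the definition of properness, using the key observation that a family of subsets of a finite set is an abstract simplicial complex precisely when it is closed under taking subsets (and, in the geometric case, that $\Delta^g$ automatically inherits the intersection condition from $\Delta$, so the only issue is downward closure). So the whole statement reduces to the assertion: the action is proper if and only if, for every $g \in G$, the set $\Delta^g$ of $g$-fixed faces is closed under passing to subfaces.

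First I would handle the ``only if'' direction. Suppose the action is proper and fix $g \in G$. Take $\sigma \in \Delta^g$, so $g\sigma = \sigma$, and let $\tau \le \sigma$ be a face of $\sigma$. Properness applied to the pair $(g, \sigma)$ with $y = \tau$ immediately gives $g\tau = \tau$, i.e.\ $\tau \in \Delta^g$. Hence $\Delta^g$ is downward closed, and since $\Delta$ is a simplicial complex, $\Delta^g$ is too; in the geometric case, the faces of $\Delta^g$ are geometric simplices and the pairwise-intersection-is-a-common-face condition is inherited from $\Delta$, so $\Delta^g$ is a geometric simplicial complex.

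For the converse, suppose $\Delta^g$ is a simplicial complex for every $g \in G$; I would show the action on $\FF(\Delta)$ is proper. Let $g \in G$ and $\sigma \in \Delta$ with $g\sigma = \sigma$, and let $\tau \le \sigma$. Then $\sigma \in \Delta^g$ by definition, and since $\Delta^g$ is closed under subfaces, $\tau \in \Delta^g$, i.e.\ $g\tau = \tau$. This is exactly the defining condition of properness (for the induced action on the face poset). Note that this direction does not even need the full simplicial-complex hypothesis on $\Delta^g$ — downward closure alone suffices — but the stated hypothesis certainly implies it.

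I do not expect a genuine obstacle here; the lemma is essentially an unpacking of definitions, and the one point requiring a sentence of care is why $\Delta^g$ satisfies the intersection axiom in the geometric setting (it does, because any two simplices of $\Delta^g$ are in particular simplices of $\Delta$, whose intersection is a common face $\nu$ of both, and $\nu$ is fixed by $g$ since it is a subface of the fixed simplices once we know downward closure — or, more simply, $\nu$ is itself the intersection of two $g$-invariant sets hence $g$-invariant). The only subtlety worth flagging is that ``$\Delta^g$ is a simplicial complex'' is a slightly stronger-sounding statement than ``$\Delta^g$ is downward closed'', but the two are equivalent given that $\Delta^g \subseteq \Delta$ and $\Delta$ is a simplicial complex, so nothing is lost.
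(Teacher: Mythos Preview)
Your proposal is correct and follows essentially the same approach as the paper: both recognize that the abstract case is a direct unpacking of the definition of properness as downward closure of each $\Delta^g$. The only cosmetic difference is that the paper handles the geometric case by formally reducing to the abstract case via the underlying abstract complex $\usc{\Delta}$ (noting $\FF(\Delta)=\FF(\usc{\Delta})$ and that $\Delta^g$ is a geometric simplicial complex iff $\usc{\Delta}^g$ is an abstract one), whereas you treat the geometric case directly by observing that the intersection axiom is inherited from $\Delta$; both arguments are equivalent and equally short.
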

\begin{proof}
If $\Delta$ is an abstract simplicial complex, then the claim is a rewording of the definition of proper action.

Now, let $\Delta$ be a geometric simplicial complex, and let $\usc{\Delta}$ be the underlying abstract simplicial complex. Since $\FF(\Delta) = \FF(\usc{\Delta})$, the action of $G$ on $\Delta$ is proper if and only if the action of $G$ on $\usc{\Delta}$ is. Moreover, given any $g \in G$, one has that $\Delta^g$ is a geometric simplicial complex if and only if $\Delta^g$ is a subcomplex of $\Delta$ or, equivalently, $\usc{\Delta}^g$ is a subcomplex of $\usc{\Delta}$ and this, in turn, is equivalent to $\usc{\Delta}^g$ being an abstract simplicial complex. The claim now follows from the first statement of this lemma.
\end{proof}

\begin{lemma} \label{lem:proper triangulation}
Let $P$ be an $M$-lattice polytope in $\mathbb R^d$ with a lattice triangulation $\Delta$, and let $\rho$ be an affine action of a group $G$ on $\mathbb R^d$ that preserves $M$ and $\Delta$. If the action of $G$ on $\Delta$ is proper, then for every $g \in G$ one has that $P^g$ is a lattice polytope triangulated by $\Delta^g$. If moreover $\Delta$ is unimodular, so is each $\Delta^g$.
\end{lemma}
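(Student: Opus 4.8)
The plan is to verify that $\Delta^g$ meets the two defining conditions of a triangulation of $P^g$ --- that it is a geometric simplicial complex and that $\supp{\Delta^g} = P^g$ --- and then to read off the lattice and unimodularity statements almost for free. The first condition is immediate: since the action of $G$ on $\Delta$ is proper by hypothesis, \Cref{lem:proper action on sc} tells us that $\Delta^g$ is a geometric simplicial complex (a subcomplex of $\Delta$).

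The heart of the matter is the identity $\supp{\Delta^g} = P^g$. For the inclusion $\supp{\Delta^g}\subseteq P^g$, I would fix $\sigma\in\Delta^g$ and invoke properness of the induced action on $\FF(\Delta)$: from $\rho(g)(\sigma)=\sigma$ we get $\rho(g)(\tau)=\tau$ for every face $\tau\leq\sigma$, in particular $\rho(g)(v)=v$ for every vertex $v$ of $\sigma$. Since $\rho(g)$ is affine and fixes the vertices of $\sigma$, which affinely span $\operatorname{aff}(\sigma)$, it fixes $\sigma$ pointwise; and as $\sigma\subseteq\supp{\Delta}=P$, this gives $\sigma\subseteq P^g$. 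This is precisely where properness is used --- for a non-proper action a simplex of $\Delta^g$ need not be pointwise fixed (think of an edge whose endpoints are swapped by $g$). For the reverse inclusion I would use the standard fact that each point $x$ of $P=\supp{\Delta}$ lies in the relative interior of a unique face $\sigma\in\Delta$: because $\rho(g)$ is an affine homeomorphism permuting the faces of $\Delta$, it carries $\operatorname{relint}(\sigma)$ onto $\operatorname{relint}(\rho(g)(\sigma))$, so if $x\in P^g$ then $x\in\operatorname{relint}(\sigma)\cap\operatorname{relint}(\rho(g)(\sigma))$, forcing $\rho(g)(\sigma)=\sigma$ by uniqueness; hence $\sigma\in\Delta^g$ and $x\in\supp{\Delta^g}$. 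The two inclusions together show that $\Delta^g$ triangulates $P^g$; in particular $P^g=P\cap\operatorname{Fix}(\rho(g))$, the intersection of a polytope with an affine subspace, is itself a polytope (and nonempty, since $\rho(g)$ fixes the barycenter of $P$, although this is not needed).

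It remains to check the lattice conditions. Every vertex of $\Delta^g$ is a vertex of $\Delta$, hence a point of $M$, so $\Delta^g$ is a lattice triangulation; and since every vertex of a polytope is a vertex of each of its triangulations, $V(P^g)\subseteq V(\Delta^g)\subseteq M$, so $P^g$ is an $M$-lattice polytope. If $\Delta$ is moreover unimodular, then every simplex of $\Delta$ is unimodular, and unimodularity of a lattice simplex is a property intrinsic to the simplex and the lattice $M$ (it does not refer to any ambient complex); hence every simplex of $\Delta^g\subseteq\Delta$ is unimodular and $\Delta^g$ is a unimodular lattice triangulation of $P^g$. I expect the only genuinely delicate point to be the identity $\supp{\Delta^g}=P^g$: the inclusion $\subseteq$ is where properness has to be brought in, while $\supseteq$ relies on the unique-face property of triangulations; everything else is bookkeeping.
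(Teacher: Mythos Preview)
Your proposal is correct and follows essentially the same route as the paper's proof: both invoke \Cref{lem:proper action on sc} to get that $\Delta^g$ is a subcomplex, use properness to show each $\sigma\in\Delta^g$ is pointwise fixed (hence $\supp{\Delta^g}\subseteq P^g$), and use the unique-relative-interior property of triangulations for the reverse inclusion. The remaining lattice and unimodularity claims are handled identically, the only cosmetic difference being that the paper deduces ``$P^g$ is a lattice polytope'' directly from $\Delta^g$ being a lattice triangulation of it, rather than first observing $P^g$ is a polytope via intersection with an affine subspace.
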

\begin{proof}
    Assume the action of $G$ is proper and fix $g \in G$. By \Cref{lem:proper action on sc}, $\Delta^g$ is a geometric simplicial complex; moreover, all its vertices are lattice points of $P^g$, since they form a subset of the vertices of the original  lattice triangulation $\Delta$. If we can prove that \begin{equation}P^g = \bigcup_{\sigma \in \Delta^g}\sigma,\end{equation}
    then $\Delta^g$ will be a triangulation of $P^g$ and, as a consequence, $P^g$ will be a lattice polytope. 
Fix a face $\sigma = \conv\{v_1, \ldots, v_s\}$ of $\Delta$. If $\sigma \in \Delta^g$, then $\rho(g)(v_i) = v_i$ for every $i \in [s]$ by \Cref{lem:proper action on sc}. It then follows immediately that $\sigma \subseteq P^g$. This proves $P^g \supseteq \bigcup_{\sigma \in \Delta^g}\sigma$.

Let us now show that $P^g \subseteq \bigcup_{\sigma \in \Delta^g}\sigma$. Let $x \in P^g$; then, by definition, $x \in P$ and $\rho(g)(x) = x$. Since $\Delta$ triangulates $P$, the point $x$ lies in the relative interior of a unique face $\tau = \conv\{v_1, \ldots, v_s\}$ of $\Delta$  (see, e.g., \cite[\S 2.3.1]{DRS}). However, it is also true that $x = \rho(g)(x)$ lies in the relative interior of $\rho(g)(\tau) = \{\rho(g)(v_1), \ldots, \rho(g)(v_s)\}$, which is also a face of $\Delta$. By the aforementioned uniqueness, it follows that $\rho(g)(\tau) = \tau$, i.e.~$\tau \in \Delta^g$.

We have proved that $\Delta^g$ triangulates $P^g$. If $\Delta$ is unimodular then $\Delta^g$, being a subcomplex of $\Delta$, is unimodular as well (since faces of unimodular simplices are unimodular).
\end{proof}

\section{Background on equivariant Hilbert series} \label{sec:eHS}

\subsection{Representations of finite groups} \label{sec:representation preliminaries}

Let $G$ be a finite group and $\mathbb K$ be a field. A representation of $G$ over $\mathbb K$ is any homomorphism $\rho:G\to \gl{d}{\mathbb K}$. The representation ring $R(G, \mathbb K)$ (or ``Grothendieck ring of representations'') of $G$ over $\mathbb K$ is the ring generated by all isomorphism classes of representations of $G$ over $\mathbb K$ with addition and multiplication induced by taking direct sum, resp.\ tensor product of representations, see \cite[Definition 2.3]{AdamsReiner}.

With every representation $\rho$ is associated a character $\chi_\rho: G\mapsto \mathbb K$ defined by $\chi_\rho(g)=\operatorname{trace}(\rho(g))$. Every character is constant on conjugation classes of $G$ and thus is an element of the ring $\classf{G}{\mathbb K}$ of class functions. The ring of virtual characters $\basering{G}(\mathbb K)$ is the subring of $\classf{G}{\mathbb K}$ 
generated by all 
integer multiples of characters.  The function $\rho \mapsto \chi_\rho$ maps $R(G, \mathbb K)$ to $\basering{G}(\mathbb K)$. When $\mathbb K$ has characteristic zero this mapping is injective 
 and defines an isomorphism onto its image (see, e.g., \cite[Corollary 30.14]{CurtisReiner}). 
 
 In the following, when not otherwise stated we will consider complex representations and write
 $$ \basering{G}:=\basering{G}(\mathbb C)\cong R(G, \mathbb C).$$

If an element of $R(G,\mathbb{C})$  is the class of a representation of $G$ over $\mathbb{C}$, we will say that such an element (or, equivalently, its image in $\basering{G}$) is \emph{effective}. An element of $R(G,\mathbb{C})$ (respectively, $\basering{G}$) is effective if and only if it is a positive integer combination of irreducible $G$-representations (respectively, $G$-characters).

 Let $A(t)=\sum a_it^i$ be an element of the ring $\basering{G}[[t]]$ of formal power series over $\basering{G}$. Evaluating $A(t)$ at any $g\in G$ yields a formal power series
 $$
 A(t)(g) = \sum a_i(g) t^i \in \mathbb C[[t]].
 $$
\begin{remark} \label{rem:evaluation at every g}
   For $A(t), B(t)\in \basering{G}[[t]]$, we have $A(t)=B(t)$ if and only if $A(t)(g)=B(t)(g)$ for all $g\in G$. The ``only if'' implication is trivial. For the other statement write $A(t)=\sum a_it^i$, $B(t)=\sum b_it^i$. Fix an index $i$. If $a_i(g)=b_i(g)$ for every $g \in G$, then it must be that $a_i=b_i$, since characters are uniquely determined by their evaluations. Since this holds for every $i$ by assumption, we conclude that $A(t)=B(t)$ in $\basering{G}[[t]]$.
\end{remark}

\subsection{The equivariant Hilbert series of a Cohen--Macaulay standard graded algebra} \label{subsec:eqHilb in general}

Let $\mathbb{K}$ be a field and let $R = \bigoplus_{i \in \mathbb{N}}R_i$ be a standard graded $\mathbb{K}$-algebra, i.e.,~an $\mathbb{N}$-graded $\mathbb{K}$-algebra finitely generated by its degree one part $R_1$ and such that $R_0 = \mathbb{K}$.
Note that, letting $S := \mathrm{Sym}^{\bullet}_{\mathbb{K}}R_1$ be the symmetric algebra over $\mathbb{K}$ of the finite-dimensional $\mathbb{K}$-vector space $R_1$, one gets a natural surjection $S \twoheadrightarrow R$, whose kernel $I$ lives in degrees two and higher. In particular, $S_1$ and $R_1$ can be canonically identified.

Let $\rho\colon G \to \mathrm{GrAut}_{\mathbb{K}}(R)$ be an action of $G$ on $R$ by graded $\mathbb{K}$-algebra automorphisms. (We record here that any such $\rho$ lifts to an action $\hat{\rho}$ of $G$ on $S$ by graded $\mathbb{K}$-algebra automorphisms, and that $\hat{\rho}(g)(I) = I$ for every $g \in G$; in other words, $I$ is invariant under the $G$-action $\hat{\rho}$.)

Since by construction $G$ acts via $\rho$ on each graded component of $R$ (and acts trivially on $R_0 = \mathbb{K}$), it makes sense to consider the following formal power series in $R(G,\mathbb{K})[[t]]$:
\[\sum_{i \geq 0}[R_i]t^i = 1 + [R_1]t + [R_2]t^2 + \ldots,\] where $[R_i]$ is the isomorphism class of the $G$-representation on $R_i$ induced by $\rho$, as an equivariant analogue of the Hilbert series of $R$.
Since for the scope of this paper we are only interested in complex representations of finite groups, unless otherwise stated we will assume that $G$ is finite, $\mathbb{K} = \mathbb{C}$, and reserve the name \emph{equivariant Hilbert series} of $R$ for the following formal power series in $\basering{G}[[t]]$:
\begin{equation} \label{eq:equivariant Hilbert series}
\eqhilb{\rho}{R}{t} := \sum_{i \geq 0}\chi_{R_i}t^i = 1 + \chi_{R_1}t + \chi_{R_2}t^2 + \ldots,
\end{equation}
where $\chi_{R_i}$ is the character of the complex $G$-representation of $R_i$ induced by $\rho$.

In the non-equivariant case (over an arbitrary field $\mathbb{K})$, it is customary to express the Hilbert series rationally as $\frac{h(t)}{(1-t)^d}$, where $h(t)$ is a polynomial with integer coefficients and $d$ is the Krull dimension of $R$. Such a rational expression is unique and $h(t)$ is known as the \emph{$h$-polynomial} of $R$. However, when dealing with equivariant Hilbert series, things become more complicated. 
In the special case when $R$ is the Stanley--Reisner ring of a simplicial complex endowed with a proper action (in the sense of \Cref{def:translative}), we will see in \Cref{subsec:eqHilb_SC_CM} how to use a suitable multigraded structure of $R$ to obtain a rational expression of the equivariant Hilbert series with a power of $(1-t)$ as its denominator. Even in that case, though, other rational expressions are possible, and it is not a priori obvious which choice of a denominator is the most informative one.

Recall from commutative algebra that a \emph{homogeneous system of parameters} for the $\mathbb{N}$-graded $d$-dimensional $\mathbb{K}$-algebra $R$ is a collection $\theta_1, \ldots, \theta_d$ of homogeneous elements of $\mathfrak{m} = (x \in R \mid \deg(x)>0)$ such that $\sqrt{(\theta_1, \ldots, \theta_d)} = \mathfrak{m}$. If $\theta_1, \ldots, \theta_d$ all have the same degree $r$, we say that the homogeneous system of parameters has degree $r$; in the special case when $r=1$, we say that the linear forms $\theta_1, \ldots, \theta_d$ form a \emph{linear system of parameters} for $R$. If $z_1, \ldots, z_r \in R$ are such that $R/(z_1, \ldots, z_r) \neq 0$ and, for every $i$, the element $z_i$ is a nonzerodivisor on the quotient ring $R/(z_1, \ldots, z_{i-1})$, we say that the ordered sequence $z_1, \ldots, z_r$ is a \emph{regular sequence} for $R$. When the ring $R$ is Cohen--Macaulay, then every homogeneous system of parameters is a regular sequence, and this is in fact an alternative characterization of Cohen--Macaulayness for $R$: see, e.g., \cite[Theorem I.5.9]{StanleyCombCommAlg}.

For the rest of the section, we will assume that $G$ is finite, $R$ is Cohen--Macaulay, and $\mathbb{K} = \mathbb{C}$. Under these assumptions, the key observation is that \emph{different choices of $G$-invariant linear system of parameters will yield different rational expressions for the equivariant Hilbert series of $R$}. This was essentially already known to Stembridge \cite[proof of Theorem 1.4]{StembridgeWeyl} and has recently been carefully restated by Stapledon \cite[Lemma 4.8]{StapledonNew}. For ease of reference, we include here Stapledon's version of the statement, and refer the interested reader to \cite{StapledonNew} for a proof: 

\begin{lemma}[{\cite[Lemma 4.8]{StapledonNew}}] \label{lem:artinian_reduction}
Let $R = \bigoplus_{i \in \mathbb{N}}R_i$ be a positively graded, finitely generated, $d$-dimensional Cohen--Macaulay $\mathbb{C}$-algebra with $R_0 = \mathbb{C}$. Let $\rho$ be an action of a finite group $G$ on $R$ via graded $\mathbb{C}$-algebra automorphisms. Let ${\theta}_1, \ldots, {\theta}_d$ be a homogeneous system of parameters of degree $r$ for $R$ and let $J \subseteq R$ be the ideal generated by ${\theta}_1, \ldots, {\theta}_d$. Assume that $J$ is $G$-invariant. Then
\[\eqhilb{\rho}{R}{t} = \frac{\eqhilb{\overline{\rho}}{R/J}{t}}{\det(\mathrm{Id} - J_rt^r)},\]
where:
\begin{itemize}
\item $\overline{\rho}$ is the $G$-action induced by $\rho$ on the (Artinian) quotient $\mathbb{C}$-algebra $R/J$;   
\item $J_r$ is the degree $r$ component of $J$ (i.e., the $\mathbb{C}$-vector space generated by ${\theta}_1, \ldots, {\theta}_d$) with the representation induced by $\rho$;
\item $\det(\mathrm{Id} - J_rt^r) = \sum_{i=0}^{d}(-1)^i\chi_{\wedge^i\!J_r}t^i$.
\end{itemize}
\end{lemma}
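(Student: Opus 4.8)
The plan is to present $R$ as a free module over the polynomial ring generated by $\theta_1,\dots,\theta_d$, to make the splitting $G$-equivariant so that the classical factorization of the Hilbert series upgrades to an equality of characters, and then to read the denominator off the standard generating-function identity relating symmetric and exterior powers of a representation. Concretely, since $\theta_1,\dots,\theta_d$ is a homogeneous system of parameters for the $d$-dimensional algebra $R$, the $\theta_i$ are algebraically independent, so $A:=\mathbb{C}[\theta_1,\dots,\theta_d]\subseteq R$ is a graded polynomial subring with each $\theta_i$ of degree $r$, and $R$ is a finitely generated graded $A$-module. Because $J=(\theta_1,\dots,\theta_d)$ is $G$-invariant and its degree-$r$ component is exactly $J_r=\mathbb{C}\theta_1\oplus\dots\oplus\mathbb{C}\theta_d$ (any multiple $f\theta_i$ with $\deg f>0$ has degree $\geq 2r$), the subspace $J_r\subseteq R_r$ is a subrepresentation; hence $A\cong\operatorname{Sym}^{\bullet}_{\mathbb{C}}(J_r)$ is a $G$-stable graded subring of $R$, carrying the $G$-action functorially induced from the one on $J_r$, so $\eqhilb{\rho}{A}{t}=\sum_{k\geq 0}\chi_{\operatorname{Sym}^{k}J_r}\,t^{kr}$.

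The core step is an equivariant structure theorem for $R$ as an $A$-module. Since $R$ is Cohen--Macaulay of dimension $d=\dim A$ and module-finite over $A$, it is a maximal Cohen--Macaulay $A$-module, hence graded free over $A$, of rank $\dim_{\mathbb{C}}(R/JR)=\dim_{\mathbb{C}}(R/J)$, which is finite because $R/J$ is Artinian. As $G$ is finite and $\operatorname{char}\mathbb{C}=0$, Maschke's theorem provides, degree by degree, a graded $G$-equivariant $\mathbb{C}$-linear section $s\colon R/J\to R$ of the projection $\pi\colon R\to R/J$. Then the graded $A$-linear map
\[\Phi\colon A\otimes_{\mathbb{C}}(R/J)\longrightarrow R,\qquad a\otimes\bar x\longmapsto a\cdot s(\bar x),\]
with $G$ acting diagonally on the source, is $G$-equivariant (using that $G$ acts on $R$ by ring automorphisms and that $A$ and $s$ are $G$-equivariant), and modulo $A_{>0}$ it reduces to the identity of $R/J$; hence $\Phi$ is surjective by graded Nakayama, and a graded surjection between finitely generated free $A$-modules of the same finite rank is an isomorphism. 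Therefore $R\cong A\otimes_{\mathbb{C}}(R/J)$ as graded $\mathbb{C}[G]$-modules, and comparing $G$-characters in each degree gives
\[\eqhilb{\rho}{R}{t}=\eqhilb{\rho}{A}{t}\cdot\eqhilb{\overline{\rho}}{R/J}{t}.\]

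Finally I would identify $\eqhilb{\rho}{A}{t}$ with $1/\det(\mathrm{Id}-J_rt^r)$. By \Cref{rem:evaluation at every g} it suffices to check the identity after evaluation at each $g\in G$: if $\lambda_1,\dots,\lambda_d$ are the eigenvalues of $\rho(g)$ acting on $J_r$, then $\chi_{\operatorname{Sym}^{k}J_r}(g)=h_k(\lambda)$ and $\chi_{\wedge^{j}J_r}(g)=e_j(\lambda)$, while $\det(\mathrm{Id}-J_rt^r)$ evaluates to $\det(\mathrm{Id}_{J_r}-\rho(g)t^r)=\prod_{l=1}^{d}(1-\lambda_l t^r)$; the classical identity $\big(\sum_k h_k u^k\big)\big(\sum_j(-1)^j e_j u^j\big)=1$ with $u=t^r$ then gives $\eqhilb{\rho}{A}{t}(g)=\prod_l(1-\lambda_l t^r)^{-1}$. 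Combining this with the previous display proves the lemma. The only genuinely delicate point is the equivariant splitting in the core step: a naively chosen homogeneous $A$-basis of $R$ need not be $G$-stable, and it is precisely semisimplicity of $\mathbb{C}[G]$-modules that upgrades the free-module decomposition $R\cong A\otimes_{\mathbb{C}}(R/J)$ to a $G$-equivariant one; everything else is bookkeeping with characters.
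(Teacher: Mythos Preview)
Your argument is correct. The paper does not give its own proof of this lemma; it explicitly defers to Stapledon's preprint \cite{StapledonNew} and to Stembridge's earlier argument \cite[proof of Theorem 1.4]{StembridgeWeyl}. The approach you take---presenting $R$ as a free module over the polynomial ring $A=\operatorname{Sym}^\bullet(J_r)$, using Maschke's theorem to choose a $G$-equivariant graded section $R/J\to R$, and then reading off the denominator from the classical $\operatorname{Sym}/\wedge$ generating-function identity---is precisely the standard proof going back to Stembridge, so there is nothing to contrast.

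One minor remark: the displayed expansion of $\det(\mathrm{Id}-J_rt^r)$ in the statement should have $t^{ir}$ rather than $t^i$ in each summand (this is immaterial in the paper's applications, which all have $r=1$). You implicitly use the correct version when you substitute $u=t^r$ in the symmetric-function identity, so your computation is fine.
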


\begin{corollary} \label{cor:effective numerator}
    Let $R = \bigoplus_{i \in \mathbb{N}}R_i$ be a standard graded Cohen--Macaulay $\mathbb{C}$-algebra. Let $\rho$ be an action of a finite group $G$ on $R$ via graded $\mathbb{C}$-algebra automorphisms. Let ${\theta}_1, \ldots, {\theta}_d$ be a linear system of parameters for $R$ and assume that each ${\theta}_i$ lies in the invariant ring $R^G$, i.e.,~$\rho(g)(\theta_i) = \theta_i$ for every $g \in G$. Then
\begin{equation} \label{eq:good rational eqHilb}
    \eqhilb{\rho}{R}{t} = \frac{\eqhilb{\overline{\rho}}{R/J}{t}}{(1-t)^d},
\end{equation}
where $J$ is the $G$-invariant ideal of $R$ generated by $\theta_1, \ldots, \theta_d$ and $\overline{\rho}$ is the $G$-action induced by $\rho$ on the (Artinian) quotient $\mathbb{C}$-algebra $R/J$. In particular, the numerator of the rational expression in \eqref{eq:good rational eqHilb} is effective.
\end{corollary}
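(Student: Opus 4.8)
The plan is to obtain \Cref{cor:effective numerator} as the special case $r = 1$ of \Cref{lem:artinian_reduction}, so the whole argument reduces to showing that the denominator $\det(\mathrm{Id} - J_1 t)$ appearing there collapses to the scalar polynomial $(1-t)^d$, and that the resulting numerator is a genuine (not merely virtual) character-valued polynomial. First I would record that a linear system of parameters for a $d$-dimensional graded algebra consists of $d$ algebraically independent elements, so in particular $\theta_1, \ldots, \theta_d$ are linearly independent in $R_1$; hence the degree-one component of $J = (\theta_1, \ldots, \theta_d)$ is exactly the $d$-dimensional subspace $J_1 = \mathbb{C}\theta_1 \oplus \cdots \oplus \mathbb{C}\theta_d \subseteq R_1$.

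Next I would identify $J_1$ as a $G$-representation: since each $\theta_i$ lies in $R^G$, the group $G$ fixes a basis of $J_1$ pointwise, so $J_1$ is the trivial representation of dimension $d$, and therefore $\wedge^i J_1$ is the trivial representation of dimension $\binom{d}{i}$ for every $i$. Substituting into the identity $\det(\mathrm{Id} - J_1 t) = \sum_{i=0}^{d}(-1)^i \chi_{\wedge^i J_1} t^i$ supplied by \Cref{lem:artinian_reduction} gives
\[\det(\mathrm{Id} - J_1 t) = \Big(\sum_{i=0}^{d}(-1)^i \binom{d}{i} t^i\Big)\chi_{\triv} = (1-t)^d \quad\text{in } \basering{G}[[t]],\]
using that $\chi_{\triv}$ is the multiplicative identity of $\basering{G}$. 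Since $J$ is generated by $G$-invariant elements it is a $G$-invariant ideal, so all the hypotheses of \Cref{lem:artinian_reduction} are met with $r = 1$, and the lemma yields the identity \eqref{eq:good rational eqHilb}.

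It remains to argue that the numerator $\eqhilb{\overline{\rho}}{R/J}{t}$ is effective. Because $\theta_1, \ldots, \theta_d$ is a homogeneous system of parameters for the $d$-dimensional ring $R$, the quotient $R/J$ is Artinian, so $\eqhilb{\overline{\rho}}{R/J}{t} = \sum_{i \geq 0}\chi_{(R/J)_i} t^i$ is a polynomial in $t$; and since $J$ is $G$-invariant, each graded component $(R/J)_i$ carries an honest complex $G$-representation $\overline{\rho}$, so every coefficient $\chi_{(R/J)_i}$ is the character of an actual representation and is thus effective by the characterization recalled in \Cref{sec:representation preliminaries}.

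I do not expect a real obstacle here: the mathematical content is entirely carried by \Cref{lem:artinian_reduction}, and the only delicate points are the linear independence of a linear system of parameters (needed so that $\dim_{\mathbb{C}} J_1 = d$) and the bookkeeping that identifies $\det(\mathrm{Id} - J_1 t)$ with the scalar $(1-t)^d$ inside $\basering{G}[[t]]$. For a reader who prefers to bypass \Cref{lem:artinian_reduction}, I would note in passing that the same formula falls out of the Koszul complex on the regular sequence $\theta_1, \ldots, \theta_d$ — which is $G$-equivariant precisely because the $\theta_i$ are invariant — by comparing Euler characteristics degree by degree; but routing through \Cref{lem:artinian_reduction} is the most economical path.
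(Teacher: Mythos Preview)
Your proposal is correct and follows essentially the same approach as the paper: the paper's proof is the single sentence ``This is a direct consequence of \Cref{lem:artinian_reduction}, after noting that $G$ acts trivially on $\bigwedge^i\!J_1$ for every $i$,'' and you have simply unpacked that observation in more detail (including the linear independence of the $\theta_i$ and the explicit binomial computation), together with a routine justification of effectiveness of the numerator.
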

\begin{proof}
This is a direct consequence of \Cref{lem:artinian_reduction}, after noting that $G$ acts trivially on $\bigwedge^i\!J_1$ for every $i$.
\end{proof}

\begin{example} \label{ex:equivariant Artinian reduction}
Let $R = \mathbb{C}[x,y]/(xy)$ and consider the $\mathbb{Z}_2$-action $\rho$ where the nontrivial element of $\mathbb{Z}_2$ swaps (the classes of) $x$ and $y$. Then, for each $i \geq 1$, one has that $\chi_{R_i} = 1 + \chi_{\sgn}$, where $\chi_{\sgn}$ is the character of the sign representation. Hence,
\[\eqhilb{\rho}{R}{t} = 1 + \sum_{i \geq 1}(1+\chi_{\sgn})t^i.\]
One checks that $R$ is Cohen--Macaulay and $1$-dimensional; hence, in order to apply \Cref{lem:artinian_reduction}, we just need to find a single regular element in $R_1$ generating a $\mathbb{Z}_2$-invariant ideal. The linear forms $\ell = x-y$ and $\ell' = x+y$ both have the desired properties. Let us first take $\ell = x-y$. In this case, the ideal $J = (\ell)$ is $\mathbb{Z}_2$-invariant, and since the nontrivial element of $\mathbb{Z}_2$ sends $\ell$ to $-\ell$ we get that $\det(I-J_1t) = 1 - \chi_{\sgn}t$. Since $\overline{x} \in R/J$ is fixed by the $\mathbb{Z}_2$-action, the equivariant Hilbert series of the Artinian ring $R/J$ is $1+t$. Overall, by \Cref{lem:artinian_reduction}, we get that \[\eqhilb{\rho}{R}{t} = \frac{1+t}{1-\chi_{\sgn}t}.\]
Now take $\ell' = x+y$ and $J' = (\ell')$. Now the denominator is easier to compute; indeed, since $\ell'$ is fixed by the $\mathbb{Z}_2$-action, we just get that $\det(I-J'_1t) = 1 - t$. Since the nontrivial element of $\mathbb{Z}_2$ sends $\overline{x} \in R/J'$ to $-\overline{x}$, the equivariant Hilbert series of $R/J'$ is $1+\chi_{\sgn}t$, and overall we have that 
\[\eqhilb{\rho}{R}{t} = \frac{1+\chi_{\sgn}t}{1-t}.\]
This example, despite its simplicity, is quite instructive: indeed, $\{\ell'\} = \{x+y\}$ is a linear system of parameters for $R$ that is contained in the invariant subring $R^{\mathbb{Z}_2}$. We will see in \Cref{prop:translativity via lsop} a characterization of when such linear systems of parameters can be constructed for Stanley--Reisner rings. On the other hand, choosing the linear system of parameters $\{\ell\} = \{x-y\}$ yields a palindromic numerator in the rational expression of the equivariant Hilbert series of $R$, mirroring what happens for the non-equivariant Hilbert series of the Gorenstein ring $R$. 
\end{example}

\section{Equivariant Hilbert series under translative actions} \label{sec:eHS_SR}

\subsection{The Cohen--Macaulay approach} \label{subsec:eqHilb_SC_CM}

The goal of this section is to show that, when $\Sigma$ is a simplicial complex that is Cohen--Macaulay over $\mathbb{C}$ and $G$ is a finite group acting translatively on it, the Stanley--Reisner ring $\mathbb{C}[\Sigma]$ fits the setting of \Cref{cor:effective numerator}. 

Let us first consider a more general setup where $\Sigma$ is a $d$-dimensional simplicial complex and $\rho$ is a proper (in the sense of \Cref{def:translative}) action of a finite group $G$ on $\Sigma$. Let us denote by $\Sigma/G$ the set of $G$-orbits of faces of $\Sigma$. We can make $\Sigma/G$ into a poset by defining the following partial order: $\Oc' \preceq_{\Sigma/G} \Oc$ if, given any representatives $\tau$ and $\sigma$ for $\Oc'$ and $\Oc$ respectively, there exists $g\in G$ such that $g\tau \subseteq \sigma$. The poset $\Sigma/G$ is graded if we define its rank function to be the one inherited from (the face poset of) $\Sigma$: for any $\Oc \in \Sigma/G$, we set $\mathrm{rk}_{\Sigma/G}(\Oc) = |\sigma| = \dim(\sigma)+1$, where $\sigma$ is any representative of $\Oc$.

Let $\Oc \in \Sigma/G$. Each $\rho(g)$ permutes the faces of $\Sigma$ belonging to the orbit $\Oc$, and one can thus define the character $\fchar_{\Oc}^{\Sigma}$ of the associated complex permutation representation. (When $\Oc$ is the orbit of the empty face of $\Sigma$, we set $\fchar_{\Oc}^{\Sigma} = 1$.) Defining $\fchar_{i-1}^{\Sigma}$ to be the character of the complex permutation representation on the set of all $(i-1)$-dimensional faces of $\Sigma$, it follows that $\fchar_{i-1}^{\Sigma} = \sum_{\mathrm{rk}(\Oc)=i}\fchar^{\Sigma}_{\Oc}$.

For every $k \in \{0,1,\ldots,d+1\}$, let us denote by $\hchar_k^{\Sigma}$ the virtual character
\[\hchar_k^{\Sigma} := \sum_{i=0}^{k}(-1)^{k-i}\binom{d+1-i}{k-i}\fchar_{i-1}^{\Sigma}.\]

The collections $(\fchar_{i-1}^{\Sigma})_{i=0}^{d+1}$ and $(\hchar_{i}^{\Sigma})_{i=0}^{d+1}$ are the equivariant versions of the $f$-vector and the $h$-vector of $\Sigma$, and yield precisely those when evaluated at the identity element of $G$.

We now collect for later use some useful results already known to Stembridge \cite[Lemmas 1.1 and 1.2]{StembridgeWeyl}. Here and in what follows, if $m = \prod_{i \in V(\Sigma)}x_i^{\alpha_i}$ is a monomial in $\mathbb{C}[\Sigma]$, we will call \emph{support} of $m$ the set $\mathrm{supp}(m) := \{i \in V(\Sigma) \mid \alpha_i > 0\}$. It follows immediately from the definition of Stanley--Reisner ring that the support of any monomial in $\mathbb{C}[\Sigma]$ must be a face of $\Sigma$.

\begin{proposition} \label{prop:eqhilb_proper}
    Let $\Sigma$ be a finite abstract simplicial complex and let $\rho$ be a proper action of the finite group $G$ on $\Sigma$. Then,

    \begin{enumerate}
    \item the following equalities hold in $\basering{G}[[t]]$:
    \begin{equation} \label{eq:eqhilb_proper}
    \eqhilb{\rho}{\mathbb{C}[\Sigma]}{t} = 
    \sum_{\Oc \in \Sigma/G} \fchar^{\Sigma}_{\Oc}
\frac{t^{\mathrm{rk}(\Oc)}}{(1-t)^{\mathrm{rk}(\Oc)}},
\end{equation}
    
    \begin{equation} \label{eq:hilbfaces_general}
\eqhilb{\rho}{\mathbb{C}[\Sigma]}{t} = \sum_{i=0}^{d+1} \fchar^{\Sigma}_{i-1}
\frac{t^{i}}{(1-t)^{i}} 
= \frac{\sum_{i=0}^{d+1} \hchar^\Sigma_i t^i}{(1-t)^{d+1}},
\end{equation}
where $d=\dim(\Sigma)$.

    \item for every $g \in G$, the set $\Sigma^g$ is a simplicial complex and the Hilbert series of $\mathbb{C}[\Sigma^g]$ coincides with the evaluation at $g$ of $\eqhilb{\rho}{\mathbb{C}[\Sigma]}{t}$.
    \end{enumerate}

\end{proposition}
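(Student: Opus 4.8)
The plan is to prove the two parts of \Cref{prop:eqhilb_proper} in sequence, using the monomial basis of the Stanley--Reisner ring throughout. For part (i), the starting point is the observation recorded just before the statement: $\mathbb{C}[\Sigma]$ has a $\mathbb{C}$-basis consisting of monomials whose support is a face of $\Sigma$, and this basis is permuted (up to nothing — it is literally permuted) by the $G$-action $\rho$, because $\rho$ acts by permuting the variables $x_v$. Grouping the degree-$i$ monomials by their support $\sigma \in \Sigma$, the permutation representation $\mathbb{C}[\Sigma]_i$ decomposes as a direct sum over $G$-orbits of faces. For a fixed face $\sigma$ with $|\sigma| = \mathrm{rk}(\Oc) =: k$, the monomials of degree $i$ with support exactly $\sigma$ are in bijection with the compositions of $i$ into $k$ positive parts, i.e.\ there are $\binom{i-1}{k-1}$ of them, and $G$ acts on the collection $\bigcup_i \{\text{monomials supported on the orbit }\Oc\}$ compatibly with the action on $\Oc$ itself — here properness is what guarantees that $\Sigma^g$ behaves well, but for the generating-function identity what one really needs is just that $\mathrm{stab}_G(\sigma)$ acts on the set of degree-$i$ monomials supported on $\sigma$ and that, summing a geometric-type series, $\sum_{i \geq k} \binom{i-1}{k-1} t^i = t^k/(1-t)^k$. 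The cleanest way to make this rigorous is to evaluate at each $g \in G$ (using \Cref{rem:evaluation at every g}): the coefficient of $t^i$ in $\eqhilb{\rho}{\mathbb{C}[\Sigma]}{t}(g)$ is the number of degree-$i$ monomials of $\mathbb{C}[\Sigma]$ fixed by $g$, and a monomial $m$ is fixed by $g$ iff $\mathrm{supp}(m) \in \Sigma^g$ and $m$ is constant on the $\langle g\rangle$-orbits of $\mathrm{supp}(m)$ — but since $\mathrm{supp}(m) \in \Sigma^g$ and $g$ fixes $\mathrm{supp}(m)$ setwise while also (by properness, via \Cref{lem:proper action on sc}) $\Sigma^g$ is a simplicial complex whose faces are pointwise fixed, in fact $g$ fixes each vertex of $\mathrm{supp}(m)$, so \emph{every} monomial supported on a face of $\Sigma^g$ is $g$-fixed. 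This directly gives part (ii) and simultaneously furnishes the evaluation-at-$g$ proof of part (i).

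More concretely, for part (ii): \Cref{lem:proper action on sc} tells us that $\Sigma^g$ is a genuine abstract simplicial complex. Its Stanley--Reisner ring $\mathbb{C}[\Sigma^g]$ has monomial basis indexed by monomials supported on faces of $\Sigma^g$. By the discussion above, these are exactly the $g$-fixed monomials in the basis of $\mathbb{C}[\Sigma]$: the inclusion $\subseteq$ is clear since a face of $\Sigma^g$ is $g$-fixed pointwise; conversely if $m \in \mathbb{C}[\Sigma]$ is $g$-fixed then $\mathrm{supp}(m)$ is $g$-fixed setwise, hence $\mathrm{supp}(m) \in \usc{\Delta}^g = \Sigma^g$ (here we use that properness forces setwise-fixed to be the same as an element of the subcomplex $\Sigma^g$), and then $g$ acts trivially on the vertices of $\mathrm{supp}(m)$, so $gm = m$ automatically regardless of exponents — meaning every such $m$ is fixed. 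Therefore, counting degree-$i$ monomials, $\dim_\mathbb{C} \mathbb{C}[\Sigma^g]_i = \#\{g\text{-fixed degree-}i\text{ monomials of }\mathbb{C}[\Sigma]\} = \mathrm{trace}(\rho(g)|_{\mathbb{C}[\Sigma]_i}) = \chi_{\mathbb{C}[\Sigma]_i}(g)$, which is precisely the coefficient of $t^i$ in $\eqhilb{\rho}{\mathbb{C}[\Sigma]}{t}(g)$. This is the asserted equality of (ordinary) Hilbert series.

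For the two displayed identities in part (i): the first equality in \eqref{eq:eqhilb_proper} follows from the orbit decomposition $\mathbb{C}[\Sigma]_i = \bigoplus_{\Oc \in \Sigma/G} (\text{monomials of degree }i\text{ supported on }\Oc)$ as $G$-representations, together with the fact that for a single orbit $\Oc$ of rank $k$ the graded permutation character $\sum_i \chi(\text{deg }i\text{ monomials on }\Oc) t^i$ equals $\fchar^\Sigma_\Oc \cdot t^k/(1-t)^k$; the last claim is checked orbit-by-$\langle g\rangle$-orbit after evaluating at $g$, using $\sum_{i\geq k}\binom{i-1}{k-1}t^i = t^k/(1-t)^k$ and the fact (again) that a monomial supported on a $g$-fixed face of the orbit is itself $g$-fixed, so the graded trace over that part of the orbit is $(\#\text{of deg-}i\text{ monomials on that face}) = \binom{i-1}{k-1}$ times the number of faces in $\Oc^g$, which is $\fchar^\Sigma_\Oc(g)$. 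The identity \eqref{eq:hilbfaces_general} then follows: the first equality is just regrouping orbits by rank, $\sum_{\mathrm{rk}(\Oc)=i}\fchar^\Sigma_\Oc = \fchar^\Sigma_{i-1}$; the second equality is the purely formal manipulation $\sum_{i=0}^{d+1}\fchar^\Sigma_{i-1} t^i(1-t)^{d+1-i}/(1-t)^{d+1}$ together with the binomial expansion of $(1-t)^{d+1-i}$, which reorganizes the numerator coefficients into exactly the $\hchar^\Sigma_k$ defined above (this is the standard $f$-to-$h$ conversion, carried through verbatim with characters in place of integers). I expect the main obstacle — really the only subtle point — to be making precise the repeated claim that, when the action is proper, a monomial of $\mathbb{C}[\Sigma]$ is $g$-fixed precisely when its support lies in $\Sigma^g$ (equivalently, precisely when $g$ fixes its support \emph{pointwise}); once that equivalence is nailed down via \Cref{lem:proper action on sc} and the definition of proper action, everything else is either the orbit decomposition of a permutation module or the classical $f$-vector/$h$-vector bookkeeping, now carried out in $\basering{G}[[t]]$ and justified coefficientwise by \Cref{rem:evaluation at every g}.
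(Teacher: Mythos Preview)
Your proposal is correct and follows essentially the same approach as the paper: the key step in both is that properness forces $gm=m$ if and only if $g$ fixes $\mathrm{supp}(m)$ setwise (hence, by properness, pointwise), from which the identification $\mchar_{\Oc}^k = \fchar_{\Oc}^{\Sigma}\cdot\binom{k-1}{\mathrm{rk}(\Oc)-1}$ (verified by evaluation at each $g$ via \Cref{rem:evaluation at every g}) and the equality $\mathrm{Hilb}(\mathbb{C}[\Sigma^g],t)=\eqhilb{\rho}{\mathbb{C}[\Sigma]}{t}(g)$ both follow. Aside from a stray $\usc{\Delta}^g$ that should read $\Sigma^g$, and the fact that you interleave parts (i) and (ii) where the paper treats them sequentially, the arguments coincide.
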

\begin{proof}
\phantom{}
\begin{enumerate}
\item The first equality in \eqref{eq:hilbfaces_general} comes from combining \eqref{eq:eqhilb_proper} with the fact that $\fchar_{i-1}^{\Sigma} = \sum_{\mathrm{rk}(\Oc)=i}\fchar^{\Sigma}_{\Oc}$. The second equality in \eqref{eq:hilbfaces_general} is proved by a straightforward computation akin to the non-equivariant case: see, e.g., \cite[Chapter II]{StanleyCombCommAlg}.

Let us now prove that \eqref{eq:eqhilb_proper} holds. This is essentially \cite[Lemma 1.1]{StembridgeWeyl}, and we will reprove it here for completeness. Let $m$ be a monomial of $\mathbb{C}[\Sigma]$ supported on $\sigma \in \Sigma$. Since the action of $\rho$ is proper by assumption, one has that $gm = m$ if and only if $g\sigma = \sigma$, i.e.,~the supports of $gm$ and $m$ coincide. Indeed, if $g\sigma = \sigma$, then properness dictates that $gv = v$ for every vertex $v \in \sigma$, and hence $gm = m$. Vice versa, if $gm = m$, then $gm$ and $m$ must be supported on the same face of $\Sigma$, and thus $g\sigma = \sigma$.

Now let $\Oc \in \Sigma/G$ be the $G$-orbit of some nonempty face in $\Sigma$. The action $\rho$ permutes in a degree-preserving fashion the set of monomials of $\mathbb{C}[\Sigma]$ supported on representatives of $\Oc$; let $\mchar_{\Oc}^k$ be the character of the complex permutation action on the monomials of positive degree $k$ supported on any representative of $\Oc$. We claim that 
\begin{equation} \label{eq:mchar_proper}
    \mchar_{\Oc}^k = \fchar_{\Oc}^{\Sigma} \cdot \binom{k-1}{\mathrm{rk}(\Oc)-1}.
\end{equation}

To prove \eqref{eq:mchar_proper}, by \Cref{rem:evaluation at every g} it is enough to prove that $\mchar_{\Oc}^k$ and $\fchar_{\Oc}^{\Sigma} \cdot \binom{k-1}{\mathrm{rk}(\Oc)-1}$ coincide when evaluated at any $g \in G$. Since both $\mchar_{\Oc}^k$ and $\fchar_{\Oc}^{\Sigma}$ are permutation characters, their evaluation at $g$ yields the number of fixed points of the associated permutation. In particular, $\mchar_{\Oc}^k(g)$ is the number of monomials $m$ of degree $k$ such that $G \cdot \mathrm{supp}(m) = \Oc$ and $gm = m$; by properness, this is also the number of monomials $m$ of degree $k$ supported on a face $\sigma \in \Sigma$ such that $G\sigma = \Oc$ and $g\sigma = \sigma$. The latter quantity equals $\fchar_{\Oc}^{\Sigma}(g)\cdot \binom{k-1}{\mathrm{rk}(\Oc)-1}$, as desired.

A routine computation then yields that

\[
\begin{split}
\eqhilb{\rho}{\mathbb{C}[\Sigma]}{t} &= 1 + \sum_{k \geq 1}\left(\sum_{\substack{\Oc \in \Sigma/G\\ 0 < \mathrm{rk}(\Oc)\leq k}}\mchar^k_{\Oc}\right) t^k\\ 
&= 1 + \sum_{\substack{\Oc \in \Sigma/G \\ \mathrm{rk}(\Oc)>0}}\sum_{k \geq \mathrm{rk}(\Oc)}\mchar^k_{\Oc} \cdot t^k\\
&= 1 + \sum_{\substack{\Oc \in \Sigma/G \\ \mathrm{rk}(\Oc)>0}}\sum_{k \geq \mathrm{rk}(\Oc)} \fchar_{\Oc}^{\Sigma} \cdot \binom{k-1}{\mathrm{rk}(\Oc)-1} \cdot t^k\\ 
&= 1 + \sum_{\substack{\Oc \in \Sigma/G \\ \mathrm{rk}(\Oc)>0}}\fchar_{\Oc}^{\Sigma} \cdot t^{\mathrm{rk(\Oc)}}\sum_{k \geq \mathrm{rk}(\Oc)}  \binom{k-1}{\mathrm{rk}(\Oc)-1} \cdot t^{k-\mathrm{rk}(\Oc)}\\ 
&= 1 + \sum_{\substack{\Oc \in \Sigma/G \\ \mathrm{rk}(\Oc)>0}}\fchar_{\Oc}^{\Sigma} \cdot t^{\mathrm{rk(\Oc)}} \sum_{j \geq 0} \binom{\mathrm{rk}(\Oc)+j-1}{j} \cdot t^j\\
&= \sum_{\Oc \in \Sigma/G}\fchar_{\Oc}^{\Sigma} \cdot \frac{t^{\mathrm{rk(\Oc)}}}{(1-t)^{\mathrm{rk}(\Oc)}}.
\end{split}
\]

\item This is \cite[Lemma 1.2]{StembridgeWeyl}: we reproduce it here with our notation. Let $g \in G$. Since $\rho$ is a proper action, the set $\Sigma^g$ is a simplicial complex by \Cref{lem:proper action on sc}. Then

\begin{equation} \label{eq:hilbert Sigma^g}
\mathrm{Hilb}(\mathbb{C}[\Sigma^g], t) = \sum_{i=0}^{\dim(\Sigma^g)+1}f^{\Sigma^g}_{i-1} \frac{t^i}{(1-t)^i} = \sum_{\Oc \in \Sigma/G}f^{\Sigma^g}_{\Oc} \frac{t^{\mathrm{rk}(\Oc)}}{(1-t)^{\mathrm{rk}(\Oc)}},
\end{equation}

\noindent where by $f^{\Sigma^g}_{\Oc}$ we mean the number of faces $\sigma \in \Sigma^g$ such that $G\sigma = \Oc$. Now note that $f^{\Sigma^g}_{\Oc}$ equals $\fchar^{\Sigma}_{\Oc}(g)$, since both quantities equal the number of faces $\sigma \in \Sigma$ such that $g\sigma = \sigma$ and $G\sigma = \Oc$. Comparing \eqref{eq:hilbert Sigma^g} with the evaluation at $g$ of \eqref{eq:eqhilb_proper} yields the claim.
\end{enumerate}
\end{proof}

\begin{remark} Note that \eqref{eq:hilbfaces_general} might fail when the action $\rho$ on $\Sigma$ by simplicial automorphisms is not proper. As an example, pick as $\Sigma$ a single $1$-simplex and let $G = \mathbb{Z}_2$ act on $\Sigma$ by swapping its vertices. Then, analyzing the monomials in $\mathbb{C}[\Sigma]$, one has that 
\[\eqhilb{\rho}{\mathbb{C}[\Sigma]}{t} = 1 + (1+\chi)t + (2+\chi)t^2 + \boldsymbol{(2+2\chi)}t^3 + \ldots\]

On the other hand, $\fchar^{\Sigma}_{-1} = \fchar^{\Sigma}_1 = 1$ and $\fchar^{\Sigma}_0 = 1+\chi$, whence 
\[\begin{split}\sum_{i=0}^{2}\fchar^{\Sigma}_{i-1}\frac{t^i}{(1-t)^i} &= 1 + (1+\chi)\cdot\frac{t}{1-t} + 1\cdot\frac{t^2}{(1-t)^2}\\ &= 1 + (1+\chi)t + (2+\chi)t^2 + \boldsymbol{(3+\chi)}t^3 + \ldots\end{split}\]
\end{remark}

We now recall a useful characterization of linear system of parameters for Stanley--Reisner rings, known as the \emph{Kind--Kleinschmidt criterion}.

\begin{notation}
Let $\mathbb{K}$ be a field, let $\Sigma$ be a finite abstract simplicial complex. If $\theta = \sum_{v 
\in V(\Sigma)}\lambda_v x_v \in \field[\Sigma]_1$ and $\sigma \in \Sigma \setminus \{\emptyset\}$, we write $\theta\vert_\sigma$ to denote $\sum_{v \in \sigma}\lambda_v x_v$, i.e.,~the linear form obtained from $\theta$ by picking only the terms containing variables indexed by elements of $\sigma$.
\end{notation}

\begin{lemma}[Kind--Kleinschmidt criterion, {\cite[Lemma III.2.4]{StanleyCombCommAlg}}] \label{lem:kindkleinschmidt}
Let $\mathbb{K}$ be a field and let $\Sigma$ be a finite abstract simplicial complex of dimension $d \geq 0$. The linear forms $\theta_0,\ldots,\theta_d \in \field[\Sigma]_1$ form a linear system of parameters for $\field[\Sigma]$ if and only if for every facet (or, equivalently, every nonempty face) $\sigma$ of $\Sigma$ the $\mathbb{K}$-vector subspace
$$
\langle \theta_0\vert_\sigma, 
\theta_1\vert_\sigma,
\ldots
\theta_d\vert_\sigma \rangle \subseteq \field[\Sigma]_1
$$
has dimension equal to $\vert \sigma\vert$.
\end{lemma}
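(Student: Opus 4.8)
The plan is to reduce the criterion to a statement about each minimal prime of $\mathbb{K}[\Sigma]$ separately, where it becomes elementary linear algebra, and then reassemble. Write $R := \mathbb{K}[\Sigma]$ and $J := (\theta_0,\ldots,\theta_d)$. Since $\dim R = \dim(\Sigma)+1 = d+1$, a set of $d+1$ homogeneous elements of positive degree is a linear system of parameters for $R$ precisely when the quotient $R/J$ is Artinian, i.e.\ has Krull dimension zero; so the task is to show that $R/J$ is Artinian if and only if $\dim_{\mathbb{K}}\langle\theta_0|_\sigma,\ldots,\theta_d|_\sigma\rangle = |\sigma|$ for every facet $\sigma$ of $\Sigma$.

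The first step is to recall the standard primary decomposition $J_\Sigma = \bigcap_{\sigma\text{ facet}}\mathfrak{p}_\sigma$, where $\mathfrak{p}_\sigma := (x_v \mid v \notin \sigma)$; thus $R$ is reduced, its minimal primes are exactly the $\mathfrak{p}_\sigma$, and $R/\mathfrak{p}_\sigma \cong \mathbb{K}[x_v \mid v \in \sigma]$ (setting the variables outside $\sigma$ to zero kills $J_\Sigma$, since every subset of a face is a face). Because $R$ is reduced, $\mathrm{Supp}(R/J) = V(J) = \bigcup_{\sigma}V(\mathfrak{p}_\sigma + J)$, so $\dim R/J = \max_\sigma \dim R/(\mathfrak{p}_\sigma + J)$; hence $R/J$ is Artinian if and only if $R/(\mathfrak{p}_\sigma + J)$ is Artinian for every facet $\sigma$. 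Finally, reducing $\theta_i = \sum_v \lambda_v x_v$ modulo $\mathfrak{p}_\sigma$ simply deletes the variables not indexed by $\sigma$, so $R/(\mathfrak{p}_\sigma + J) \cong \mathbb{K}[x_v \mid v \in \sigma]/(\theta_0|_\sigma,\ldots,\theta_d|_\sigma)$.

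The second step is the single-facet computation: a polynomial ring in $n$ variables modulo an ideal generated by linear forms is itself a polynomial ring, in $n - r$ variables, where $r$ is the dimension of the span of those linear forms inside the degree-one piece; this quotient is Artinian exactly when $r = n$. Taking $n = |\sigma|$ and the forms $\theta_0|_\sigma,\ldots,\theta_d|_\sigma$ gives the equivalence for facets. To pass from facets to arbitrary nonempty faces, note that if $\tau \subseteq \sigma$ with $\sigma$ a facet, the substitution killing the variables in $\sigma\setminus\tau$ is a surjection $\mathbb{K}[x_v \mid v\in\sigma]_1 \twoheadrightarrow \mathbb{K}[x_v \mid v\in\tau]_1$ carrying $\theta_i|_\sigma$ to $\theta_i|_\tau$; so if the $\theta_i|_\sigma$ span the whole $|\sigma|$-dimensional space, then their images span the whole $|\tau|$-dimensional one. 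The converse implication (facets being among the nonempty faces) is immediate.

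The only genuinely delicate point is the reduction to minimal primes, i.e.\ the equality $\dim R/J = \max_\sigma \dim R/(\mathfrak{p}_\sigma + J)$. The care needed there is to argue through Krull dimension and support, rather than through a naive vanishing-locus picture, so that nothing depends on $\mathbb{K}$ being algebraically closed; once that is granted, the remaining steps are routine linear algebra.
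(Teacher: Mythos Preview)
The paper does not prove this lemma; it simply quotes it from Stanley's \emph{Combinatorics and Commutative Algebra} \cite[Lemma III.2.4]{StanleyCombCommAlg}. Your argument is correct and is essentially the standard one: reduce to each minimal prime $\mathfrak{p}_\sigma$ of $\mathbb{K}[\Sigma]$, where the quotient becomes a polynomial ring modulo linear forms, and check Artinianity there by linear algebra. One minor comment: the appeal to reducedness in the line ``Because $R$ is reduced, $\mathrm{Supp}(R/J)=V(J)=\bigcup_\sigma V(\mathfrak{p}_\sigma+J)$'' is unnecessary---both equalities hold simply because the $\mathfrak{p}_\sigma$ are the minimal primes of $R$, regardless of whether $R$ is reduced---but this does not affect the validity of the proof.
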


\begin{remark}
The existence of a linear system of parameters for $\field[\Sigma]$ (and, more generally, for any standard graded $\mathbb{K}$-algebra) is guaranteed whenever $\mathbb{K}$ is an \emph{infinite} field. When instead $\mathbb{K}$ is finite, a linear system of parameters for $\field[\Sigma]$ might not exist. For instance, take $\mathbb{K} = \mathbb{F}_2$ and let $\Sigma$ be the complete graph on four elements, seen as a $1$-dimensional simplicial complex.
\end{remark}

We are now ready to prove the key technical result of this section: a new characterization of translativity for group actions on simplicial complexes.

\begin{theorem} \label{prop:translativity via lsop}
Let $\mathbb{K}$ be an infinite field and let $\Sigma$ be a finite nonempty abstract simplicial complex of dimension $d$ with an action of a group $G$ via simplicial automorphisms. The action is translative if and only if the ring $\field[\Sigma]$ has a linear system of parameters $\{\theta_i\}_i$ where each $\theta_i$ lies in the invariant subring $\field[\Sigma]^G$, i.e., $g\theta_i=\theta_i$ for all $i$ and $g$.
\end{theorem}

\begin{proof} If $\Sigma$ is the complex consisting only of the empty face, there is nothing to prove. Assume thus that $d \geq 0$. By \Cref{lem:translative action on sc}, the action of $G$ is translative if and only if it preserves a proper coloring of $\Sigma$.

Suppose that the action of $G$ preserves a proper $(c+1)$-coloring $\gamma\colon V(\Sigma) \to \{0,\ldots,c\}$, where $c \geq d$. For all $i=0,\ldots,d$ define
$$
\theta_i:= \sum_{v\in V(\Sigma)} \gamma(v)^i x_v,
$$
where we are setting $0^0 = 1$ by convention. Since the coloring $\gamma$ is preserved by the group action and the action permutes the vertices of $\Sigma$, we have that
$$
g\theta_i = \sum_{v\in V(\Sigma)} \gamma(v)^i x_{gv} = \sum_{v\in V(\Sigma)} \gamma(gv)^i x_{gv} = \theta_i
$$
for all $g\in G$ and all $i$. It remains to be shown that  $\theta_0,\ldots,\theta_d$ is a linear system of parameters. For this, consider any nonempty face $\sigma=\{v_0,\ldots,v_k\}$ of $\Sigma$. Then the subspace 
$\langle \theta_0\vert_\sigma, 
\theta_1\vert_\sigma,
\ldots
\theta_d\vert_\sigma \rangle$
is the image of the $(k+1) \times (d+1)$ matrix
$$
T:=
\begin{pmatrix}
1 & \gamma(v_0) & \gamma(v_0)^2 & \ldots & \gamma(v_0)^d \\
1 & \gamma(v_1) & \gamma(v_1)^2 & \ldots & \gamma(v_1)^d \\
\vdots & \vdots & \vdots & & \vdots \\
1 & \gamma(v_k) & \gamma(v_k)^2 & \ldots & \gamma(v_k)^d \\
\end{pmatrix}
$$
Since $k\leq d$, the rank of the matrix is at least equal to the rank of its leftmost maximal submatrix, 
which is a Vandermonde matrix of size $(k+1)\times (k+1)$  with determinant
$$
\prod_{0\leq i < j \leq k} (\gamma(v_i) - \gamma(v_j))\neq 0
$$
(here we use that $\gamma$ is a proper coloring, implying $\gamma(v_i)\neq \gamma(v_j)$ for all $v_i\neq v_j$). Thus the matrix $T$ has maximal rank, implying that $\dim_{\mathbb{K}}\langle \theta_0\vert_\sigma, 
\theta_1\vert_\sigma,
\ldots
\theta_d\vert_\sigma \rangle = k+1 = \vert \sigma\vert$.

For the reverse implication, assume that we are given a linear system of parameters $\theta_0,\ldots,\theta_d$ such that $g\theta_i=\theta_i$ for all $i$ and $g$. This implies that in every $\theta_i$ the variables corresponding to monomials in the same $G$-orbit must have the same coefficient. 
Thus, if $w_0,\ldots,w_c$ is a system of representatives for the orbits of the action of $G$ on $V(\Sigma)$, then every $\theta_i$ can be written as
$$
\theta_i = \sum_{j=0}^c a^{(i)}_j\left(\sum_{g\in G} x_{gw_j}\right)
$$
for some $a^{(i)}_0,\ldots a^{(i)}_c\in \mathbb{K}$. Now define a coloring of $\Sigma$ by assigning to every vertex $v$ the unique color $\gamma(v)\in \{0,\ldots, c\}$ such that $v\in Gw_{\gamma(v)}$. This coloring is obviously preserved by $G$. In order to prove that the action of $G$ is translative, it is enough to prove that the coloring $\gamma$ is proper. If $\Sigma$ is $0$-dimensional, this is trivially true. Assume thus that $\Sigma$ has dimension at least $1$ and consider any nonempty edge $\sigma=\{v,w\} \in \Sigma$. By the Kind--Kleinschmidt criterion (\Cref{lem:kindkleinschmidt}), the vector subspace $\langle \theta_0\vert_\sigma, 
\theta_1\vert_\sigma,
\ldots
\theta_d\vert_\sigma \rangle$ has dimension $2$. By construction, the coefficient of $x_v$ (respectively, $x_w$) in $\theta_i$ is $a^{(i)}_{\gamma(v)}$ (respectively, $a^{(i)}_{\gamma(w)}$). As a consequence, the $2$-dimensional subspace 
$\langle \theta_0\vert_\sigma, 
\theta_1\vert_\sigma,
\ldots
\theta_d\vert_\sigma \rangle$ is the image of the $2 \times (d+1)$ matrix

\[\begin{pmatrix}
    a^{(0)}_{\gamma(v)} & a^{(1)}_{\gamma(v)} & \cdots & a^{(d)}_{\gamma(v)}\\[0.6em]
    a^{(0)}_{\gamma(w)} & a^{(1)}_{\gamma(w)} & \cdots & a^{(d)}_{\gamma(w)}
\end{pmatrix}\!,\]

\noindent and this matrix must have rank $2$. This implies immediately that $\gamma(v) \neq \gamma(w)$.
\end{proof}

\begin{remark}
The proof of \Cref{prop:translativity via lsop} shows that the statement holds also for those finite fields $\mathbb{K}$ containing at least as many distinct elements as the number of vertices of $\Sigma$. 
\end{remark}

We are now in the position to apply \Cref{cor:effective numerator}.

\begin{theorem} \label{thm:effective numerator for SR}
    Let $\Sigma$ be a finite abstract simplicial complex that is $d$-dimensional and Cohen--Macaulay over $\mathbb{C}$, let $G$ be a finite group, and let $\rho$ be a translative action of $G$ on $\Sigma$. Let $\theta_0, \ldots, \theta_d$ be a linear system of parameters for $\mathbb{C}[\Sigma]$ such that each $\theta_i$ lies in the invariant ring $\mathbb{C}[\Sigma]^G$. Then 
    \begin{equation} \label{eq:translative rational eqHilb}
    \eqhilb{\rho}{\mathbb{C}[\Sigma]}{t} = \frac{\eqhilb{\overline{\rho}}{\mathbb{C}[\Sigma]/J}{t}}{(1-t)^{d+1}},
    \end{equation}
    where $J$ is the $G$-invariant ideal of $\mathbb{C}[\Sigma]$ generated by $\theta_0, \ldots, \theta_d$ and $\overline{\rho}$ is the $G$-action induced by $\rho$ on the quotient $\mathbb{C}$-algebra $\mathbb{C}[\Sigma]/J$. In particular, the numerator of the rational expression in \eqref{eq:translative rational eqHilb} is effective.
\end{theorem}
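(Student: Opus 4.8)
The plan is to recognize that $\mathbb{C}[\Sigma]$, equipped with the action induced by $\rho$ and with the given family $\theta_0,\dots,\theta_d$, satisfies the hypotheses of \Cref{cor:effective numerator} verbatim, so that the statement follows by a direct application of that corollary.

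First I would record that the simplicial action of $G$ on $\Sigma$ induces an action of $G$ on $\mathbb{C}[\Sigma]$ by graded $\mathbb{C}$-algebra automorphisms: each $\rho(g)$ permutes the vertices of $\Sigma$, hence the degree-one variables $x_v$, and maps non-faces to non-faces, hence fixes the Stanley--Reisner ideal $J_\Sigma$ setwise. Next I would check the standing hypotheses of \Cref{cor:effective numerator}: the ring $\mathbb{C}[\Sigma]$ is standard graded by construction; it is Cohen--Macaulay as a ring because $\Sigma$ is Cohen--Macaulay over $\mathbb{C}$ by assumption (Reisner's criterion; see, e.g., \cite[Chapter II]{StanleyCombCommAlg}); and its Krull dimension equals $\dim(\Sigma)+1 = d+1$, using that a Cohen--Macaulay complex is pure of dimension $d$. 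Hence a linear system of parameters for $\mathbb{C}[\Sigma]$ consists of $d+1$ linear forms, which is exactly the cardinality of the given family $\theta_0,\dots,\theta_d$; and by hypothesis each $\theta_i$ lies in the invariant subring $\mathbb{C}[\Sigma]^G$, so in particular the ideal $J=(\theta_0,\dots,\theta_d)$ is $G$-invariant. (That such a linear system of parameters exists in the first place is exactly the content of \Cref{prop:translativity via lsop}, and is where translativity of $\rho$ is used; in the present statement the system is supplied as a hypothesis.)

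Applying \Cref{cor:effective numerator} to $\mathbb{C}[\Sigma]$ with the ideal $J$ then gives
\[
\eqhilb{\rho}{\mathbb{C}[\Sigma]}{t} = \frac{\eqhilb{\overline{\rho}}{\mathbb{C}[\Sigma]/J}{t}}{(1-t)^{d+1}},
\]
and effectiveness of the numerator is part of the conclusion of \Cref{cor:effective numerator}: indeed, the coefficient of $t^i$ there is the character of the honest finite-dimensional $G$-representation on the $i$-th graded component of the Artinian ring $\mathbb{C}[\Sigma]/J$. There is no genuine obstacle once \Cref{prop:translativity via lsop} and \Cref{cor:effective numerator} are in place; the only points deserving a line of care are the translation from ``$\Sigma$ Cohen--Macaulay over $\mathbb{C}$'' to ``$\mathbb{C}[\Sigma]$ Cohen--Macaulay as a ring'', and the bookkeeping that the given family has the right cardinality ($d+1$, equal to $\dim \mathbb{C}[\Sigma]$) and degree ($1$), so that \Cref{cor:effective numerator} applies literally.
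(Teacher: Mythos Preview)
Your proposal is correct and matches the paper's approach: the paper simply states ``We are now in the position to apply \Cref{cor:effective numerator}'' and records the theorem without further argument, treating it as an immediate consequence once \Cref{prop:translativity via lsop} guarantees the existence of the required linear system of parameters. Your write-up is, if anything, more explicit than the paper in checking the hypotheses (standard gradedness, Reisner's criterion for Cohen--Macaulayness of the ring, Krull dimension $d+1$), which is appropriate.
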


\begin{remark} \label{rem:each h_i is effective}
Combining \Cref{thm:effective numerator for SR} with equation \eqref{eq:hilbfaces_general} in \Cref{prop:eqhilb_proper} yields that, under the hypotheses of \Cref{thm:effective numerator for SR}, each $\hchar_i^{\Sigma}$ coincides with the complex character of the action of $\rho$ on the degree $i$ part of $\mathbb{C}[\Sigma]/J$; in particular, each $\hchar_i^{\Sigma}$ is effective.
\end{remark}

For a simple example where \Cref{thm:effective numerator for SR} can be used, we refer the reader to \Cref{ex:equivariant Artinian reduction}. Indeed, the ring $R$ in \Cref{ex:equivariant Artinian reduction} is the Stanley--Reisner ring over $\mathbb{C}$ of the $0$-dimensional (and hence Cohen--Macaulay) simplicial complex $\Sigma$ having two points as its facets.

\subsection{The colorful approach} \label{subsec:eqHilb_colorful}

In this section, closely following the paper \cite{AdamsReiner} by Adams and Reiner, we wish to extract information from a suitable \emph{multigraded} structure of $\mathbb{C}[\Sigma]$, when a finite group $G$ acts translatively on $\Sigma$. To do this, we will quickly recap the approach by Adams and Reiner for the reader's convenience. For simplicity, like in most of \Cref{subsec:eqHilb_SC_CM}, we will be dealing only with Stanley--Reisner rings over $\mathbb{C}$, and will only consider the equivariant Hilbert series of $\mathbb{C}[\Sigma]$ with respect to the standard $\mathbb{N}$-grading, although multigraded versions are available (see for instance the proof of \Cref{prop:hilbfaces}). Since we deal with complex representations of finite groups, we will use characters to encode the group action information, as we explained in \Cref{subsec:eqHilb in general}. We refer the reader interested in a more general setting to the original paper \cite{AdamsReiner}.

\begin{remark}\label{rem:chains}
Recall that the construction of a complex of chains for a simplicial complex $\Sigma$ depends on the choice of an orientation of the simplices of $\Sigma$, which can be obtained by choosing a total order $\prec$ on $V(\Sigma)$. Then the vector space 
$
C_i(\Sigma,\mathbb C)
$
of $i$-chains of $\sigma$ is the vector space with standard basis $\{e_\sigma \mid \sigma\in \Sigma, \vert \sigma\vert = i+1\}$. These turn into a chain complex with boundary maps defined on basis vectors as follows:
$$
\partial_i: 
C_i(\Sigma,\mathbb C)\to C_{i-1}(\Sigma,\mathbb C),\quad
e_{\{v_0\prec v_1 \prec\ldots\prec v_i\}} \mapsto \sum_{j=0}^i 
(-1)^j e_{\{v_0\prec \ldots \prec v_{j-1} \prec v_{j+1}\prec\ldots\prec v_i\}}.
$$
Any action of a group $G$ on a simplicial complex $\Sigma$ induces linear representations of $G$ on each $C_i(\Sigma,\mathbb C)$ that are compatible with the boundary maps (thus inducing a linear representation on the reduced homology of $\Sigma$). We now show that when the action is translative, one can choose an orientation of the simplices of $\Sigma$ such that the representation $\rho_i$ on each $C_i(\Sigma,\mathbb C)$ is by permutation of coordinates.

Indeed, let $\mathcal{V}_1,\ldots,\mathcal{V}_k$ denote the $G$-orbits of vertices of $\Sigma$ under the action of $G$. Translativity implies that every $\sigma\in \Sigma$ contains at most one vertex from each $\mathcal{V}_i$. Now we can choose a total ordering $\prec$ on $V(\Sigma)$ such that $v\prec w$ whenever $v\in \mathcal{V}_i$ and $w\in \mathcal{V}_j$ for $i<j$.  Now construct the chain complex with respect to this ordering. If $\sigma=\{v_0\prec v_1 \prec\ldots\prec v_i\}\in \Sigma$, translativity implies that all $v_i$ are from different orbits. Our choice of $\prec$ implies that for every $g\in G$ we have
$g\sigma=\{gv_0\prec gv_1 \prec\ldots\prec gv_i\}$, as desired.
\end{remark}

Let $\Sigma$ be a finite nonempty abstract simplicial complex and let $\rho$ be an action of a finite group $G$ on $\Sigma$ by simplicial automorphisms. By \Cref{lem:translative action on sc}, we know that $\rho$ is translative if and only if there exists a proper coloring $\gamma$ of $\Sigma$ that is preserved by $\rho$. For the rest of the section we will hence assume that $\Sigma$ comes with a proper coloring $\gamma\colon V(\Sigma) \to \Gamma$, and $\rho$ is a color-preserving action with respect to $\gamma$. Given any $S \subseteq \Gamma$, we will denote by $\Sigma\vert_S$ the subcomplex of $\Sigma$ consisting of those faces $\sigma \in \Sigma$ with $\gamma(\sigma) \subseteq S$. Note that the action of $G$ on $\Sigma$ restricts to a translative action of $G$ on each $\Sigma\vert_S$.

\begin{remark}[Equivariant topology of $\Sigma\vert_S$]

Since $\gamma$ is a proper coloring, the colors of the vertices of any given face $\sigma \in \Sigma$ form a set $S \subseteq \Gamma$ containing precisely $|\sigma|$ colors. Since $\rho$ preserves $\gamma$, in particular it permutes the set $\{\sigma\in \Sigma\mid \gamma(\sigma)=S\}$ of $S$-colored faces.

Let us write $\fchar^\Sigma_S$ for the character of the associated complex permutation representation. Note that $\fchar^\Sigma_S$ is also the character of the permutation representation of $G$ on the standard basis of the vector space $C_{\vert S \vert -1}(\Sigma\vert_S,\mathbb C)$ of $(\vert S \vert - 1)$-chains of $\Sigma\vert_S$ constructed with the orientation chosen as in \Cref{rem:chains}. 

 Let $\hchar^{\Sigma}_S$ denote the virtual character defined by 
 \begin{equation}\label{defhcharS}
 \hchar^{\Sigma}_S \coloneqq \sum_{T \subseteq S}(-1)^{|S|-|T|}\fchar^{\Sigma}_T.
 \end{equation}
 Since the action of $G$ is compatible with the boundary maps, 
$G$ also acts on each reduced homology group $\widetilde{H}_i(\Sigma\vert_S,\mathbb C)$ defining a representation whose character we denote by $\homchar_{i}^{\Sigma\vert _S}$. We define
$$\homchar_S^\Sigma:=\sum_{i\geq -1} (-1)^i \homchar^{\Sigma\vert_S}_i.
$$
 \end{remark}

 The following lemma is folklore. We include a sketch of proof following the argument given in \cite[Theorem 1.1]{StanleyActions} for the case of order complexes of ranked posets.

 \begin{lemma} \label{lem:baclawskibjorner} Let $\Sigma$ be a finite abstract simplicial complex with a proper coloring $\gamma\colon V(\Sigma)\to \Gamma$ and let $\rho$ be a color-preserving action of the finite group $G$ on $\Sigma$. Then for every $S\subseteq \Gamma$ we have
 $$
 \hchar_S^\Sigma = \homchar_S^\Sigma.
 $$
 In particular, if the reduced homology of $\Sigma\vert_S$ is concentrated in a single degree $k$ (e.g., when $\Sigma_S$ is Cohen-Macaulay), then $\hchar^\Sigma_S = \homchar^{\Sigma\vert_S}_k$ is the character of a genuine representation. 
 \end{lemma}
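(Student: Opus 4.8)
The plan is to run, $G$-equivariantly, the classical argument identifying an alternating sum of face numbers with a reduced Euler characteristic, tracking characters in $\basering{G}$ rather than dimensions. Since $\rho$ is color-preserving, \Cref{rem:chains} lets me fix once and for all an orientation of $\Sigma$ — induced by a total order on $V(\Sigma)$ refining an ordering of the color classes — with respect to which $G$ acts on each reduced chain group $\widetilde{C}_{j}(\Sigma\vert_S,\mathbb{C})$ by permuting the standard basis $\{e_\tau\}$, with no signs. Fix $S\subseteq\Gamma$. The basis of $\widetilde{C}_{j}(\Sigma\vert_S,\mathbb{C})$ is indexed by the faces $\tau$ of $\Sigma$ with $\gamma(\tau)\subseteq S$ and $|\tau|=j+1$; grouping these faces by their color set $T:=\gamma(\tau)$ exhibits $\widetilde{C}_{j}(\Sigma\vert_S,\mathbb{C})$ as a direct sum of $G$-subrepresentations indexed by the $T\subseteq S$ with $|T|=j+1$, the $T$-summand being precisely the permutation representation on the $T$-colored faces. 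Hence, in $\basering{G}$,
\[
\bigl[\widetilde{C}_{j}(\Sigma\vert_S,\mathbb{C})\bigr]\;=\;\sum_{\substack{T\subseteq S\\ |T|=j+1}}\fchar^{\Sigma}_{T}.
\]

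Next I would invoke the equivariant Euler characteristic identity (the $G$-equivariant Hopf trace principle): for any bounded complex of finite-dimensional $G$-representations with $G$-equivariant differentials — here the reduced simplicial chain complex of $\Sigma\vert_S$, whose differentials are $G$-equivariant by the choice of orientation — the alternating sum of the classes of the terms equals the alternating sum of the classes of the homology groups in $\basering{G}$; this follows by splicing the ($G$-equivariant) short exact sequences of cycles, boundaries and homology. Applying it and substituting the decomposition above yields
\[
\sum_{T\subseteq S}(-1)^{|T|-1}\,\fchar^{\Sigma}_{T}\;=\;\sum_{i\geq -1}(-1)^{i}\,\homchar^{\Sigma\vert_S}_{i}.
\]
Multiplying both sides by the constant $(-1)^{|S|-1}$ turns the left-hand side into $\sum_{T\subseteq S}(-1)^{|S|-|T|}\fchar^{\Sigma}_{T}=\hchar^{\Sigma}_{S}$ and the right-hand side into $\homchar^{\Sigma}_{S}$, which proves the first assertion. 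This is the equivariant refinement of the non-equivariant statement $\beta(S)=(-1)^{|S|-1}\widetilde{\chi}(\Sigma\vert_S)$, and the factor $(-1)^{|S|-1}$ is exactly the one reconciling the sign conventions in the definitions of $\hchar^{\Sigma}_S$ and $\homchar^{\Sigma}_S$, so I would take a moment to make sure those conventions are the matching ones.

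For the ``in particular'' statement: if the reduced homology of $\Sigma\vert_S$ is concentrated in a single degree $k$, then $\homchar^{\Sigma\vert_S}_i=0$ for $i\neq k$, the alternating sum collapses to one term, and $\hchar^{\Sigma}_{S}$ equals $\homchar^{\Sigma\vert_S}_{k}$ up to sign; since $\homchar^{\Sigma\vert_S}_{k}$ is by definition the character of the honest $G$-representation $\widetilde{H}_{k}(\Sigma\vert_S,\mathbb{C})$, it is effective. The Cohen--Macaulay case is the instance $k=\dim(\Sigma\vert_S)=|S|-1$: a Cohen--Macaulay complex has the reduced homology of its full link $\mathrm{lk}_{\Sigma\vert_S}(\emptyset)=\Sigma\vert_S$ vanishing below its dimension, and being pure it is a complex of dimension $|S|-1$, so the homology sits in degree $|S|-1$ (where the sign is $(-1)^{|S|-1-(|S|-1)}=1$).

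I do not anticipate a serious obstacle: the argument is bookkeeping built on one structural input. The point that genuinely uses the hypothesis is the existence of a $G$-equivariant orientation making the chain groups into permutation representations — precisely what translativity, equivalently a $\rho$-invariant proper coloring, supplies via \Cref{rem:chains}; without it one would obtain virtual-character identities twisted by the signs of the $G$-action on orientations, and the identification with $\fchar^{\Sigma}_{T}$ would break. The remaining care point is the sign $(-1)^{|S|-1}$ noted above.
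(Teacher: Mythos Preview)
Your argument is correct and takes a genuinely different route from the paper. The paper proceeds pointwise: for each $g\in G$ it invokes the Baclawski--Bj\"orner Lefschetz-type theorem to identify the reduced Euler characteristic of the fixed subcomplex $(\Sigma\vert_S)^g$ with $\homchar_S^\Sigma(g)$, matches that against $\hchar_S^\Sigma(g)$, and concludes via \Cref{rem:evaluation at every g}. You instead run the equivariant Hopf trace identity directly in $\basering{G}$, which is more self-contained (no external fixed-point citation) and makes the role of the color-preserving hypothesis---supplying a sign-free permutation action on chains via \Cref{rem:chains}---completely transparent. The paper's route buys access to the fixed-point picture (useful elsewhere, e.g.\ \Cref{prop:eqhilb_proper}(ii)); yours buys a one-line proof once the orientation is fixed.

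Your instinct to pause on the sign is well-founded. With the paper's definition $\homchar_S^\Sigma=\sum_{i\geq -1}(-1)^i\homchar_i^{\Sigma\vert_S}$, the right-hand side of your displayed identity is already $\homchar_S^\Sigma$ \emph{before} you multiply by $(-1)^{|S|-1}$; what your computation actually yields is $\hchar_S^\Sigma=(-1)^{|S|-1}\homchar_S^\Sigma$. (Sanity check at $S=\emptyset$: $\hchar_\emptyset^\Sigma=1$ while $\homchar_\emptyset^\Sigma=-1$.) The same global sign is implicit in the paper's own sketch. This does not affect anything downstream, since the only case used later is Cohen--Macaulay $\Sigma\vert_S$ with $k=|S|-1$, where the sign is $+1$ and your ``in particular'' argument goes through verbatim; your hedged ``up to sign'' for general $k$ is in fact the accurate formulation.
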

 \begin{proof}
 The evaluation at $g\in G$ of the character $\homchar^\Sigma:=\sum_{i\geq -1} (-1)^i \homchar^\Sigma_i$ is what is referred to as the Lefschetz number of the simplicial map induced by $g$ in \cite{BaBj}. In  \cite[Theorem 1.1]{BaBj} it is proved that if the action of $G$ fixes pointwise every setwise-fixed simplex. Then $\hchar_S^\Sigma(g)$ (as defined in \eqref{defhcharS}) is the Euler characteristic of the fixed complex $(\Sigma\vert_S)^g$, and this equals the Lefschetz number $\homchar^{\Sigma}_S(g)$. The assumption about fixing simplices is satisfied for us since color-preserving actions are translative by \Cref{lem:translative action on sc}, and hence also proper by \Cref{lem:translative implies proper}. Moreover, as was pointed out in \cite[Theorem 1.1]{StanleyActions}, the proof of \cite[Theorem 1.1]{BaBj} works for reduced homology as well. We conclude with \Cref{rem:evaluation at every g}.
 \end{proof}

\begin{remark}[Equivariant flag $h$- and $f$-vectors]
The collections $(\fchar^\Sigma_S)_{S \subseteq \Gamma}$ and $(\hchar^\Sigma_S)_{S \subseteq \Gamma}$ are equivariant versions of the \emph{flag $f$-vector} and \emph{flag $h$-vector} of the pair $(\Sigma, \gamma)$, and give those back when evaluated at the identity element of $G$. For a reference to non-equivariant flag $f$- and $h$-vectors in the balanced case see, e.g.,~\cite[Section III.4]{StanleyCombCommAlg}.
\end{remark}

Just like in the non-equivariant case, the flag $f$-vector of $\Sigma$ refines the $f$-vector of $\Sigma$; indeed, one has that $\fchar_{i-1}^{\Sigma} = \sum_{|S|=i}\fchar_S^{\Sigma}$ for every $i \in \{0, 1, \ldots, \dim(\Sigma)+1\}$.

\begin{proposition} \label{prop:hilbfaces}
    Let $\Sigma$ be a finite abstract simplicial complex with a proper coloring $\gamma\colon V(\Sigma)\to \Gamma$ and let $\rho$ be a color-preserving action of the finite group $G$ on $\Sigma$.

    \begin{enumerate}
    \item The following equality holds in $R_G[[t]]$.
    \begin{equation} \label{eq:eqhilb_translative}
    \eqhilb{\rho}{\mathbb{C}[\Sigma]}{t} = 
    \sum_{S \subseteq \Gamma} \fchar^{\Sigma}_{S} \frac{t^{|S|}}{(1-t)^{|S|}} = \frac{\sum_{S \subseteq \Gamma} \hchar^\Sigma_S \cdot t^{|S|}}{(1-t)^{|\Gamma|}}.
\end{equation}

    \item If $|\Gamma|=\dim(\Sigma)+1$, then the equivariant flag $h$-vector of $\Sigma$ refines the equivariant $h$-vector of $\Sigma$, i.e.,~$\hchar_i^{\Sigma} = \sum_{\mathrm{rk}(S)=i}\hchar_S^{\Sigma}$ for every $i \in \{0, 1, \ldots, \vert \Gamma\vert\}$.
    \end{enumerate}

\end{proposition}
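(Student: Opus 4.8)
The plan is to exploit the fine $\mathbb{N}^{\Gamma}$-grading that the proper coloring $\gamma$ induces on $\mathbb{C}[\Sigma]$: give each variable $x_v$ the multidegree $e_{\gamma(v)} \in \mathbb{N}^{\Gamma}$. Since $J_\Sigma$ is a monomial ideal this grading is well defined on $\mathbb{C}[\Sigma]$, and since $\rho$ is color-preserving the $G$-action respects it. The crux is the following equivariant bijection: because $\gamma$ restricted to any face of $\Sigma$ is injective, for a fixed $\alpha \in \mathbb{N}^{\Gamma}$ with support $S := \{c \mid \alpha_c > 0\}$ the monomials of $\mathbb{C}[\Sigma]$ of multidegree $\alpha$ correspond bijectively, and $G$-equivariantly, to the faces $\sigma \in \Sigma$ with $\gamma(\sigma) = S$, via $\sigma \mapsto \prod_{v \in \sigma} x_v^{\alpha_{\gamma(v)}}$. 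Hence the character of $G$ on $\mathbb{C}[\Sigma]_\alpha$ equals $\fchar^{\Sigma}_S$, so the multigraded equivariant Hilbert series is $\sum_{S \subseteq \Gamma} \fchar^{\Sigma}_S \prod_{c \in S} \frac{t_c}{1-t_c}$. Specializing every $t_c$ to $t$ (valid since each graded piece in the single variable $t$ is a finite sum of multigraded pieces) gives the first equality in \eqref{eq:eqhilb_translative}.

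For the second equality, since $1-t$ is a unit in $\basering{G}[[t]]$ it suffices to prove the polynomial identity $\sum_{S \subseteq \Gamma}\fchar^{\Sigma}_S\, t^{|S|}(1-t)^{|\Gamma|-|S|} = \sum_{S \subseteq \Gamma}\hchar^{\Sigma}_S\, t^{|S|}$. Plugging the Möbius inversion $\fchar^{\Sigma}_S = \sum_{U \subseteq S}\hchar^{\Sigma}_U$ of \eqref{defhcharS} into the left-hand side and interchanging the two sums, the inner sum over $S \supseteq U$ telescopes: decomposing $S = U \sqcup V$ with $V \subseteq \Gamma \setminus U$, one obtains $\sum_{V}t^{|U|+|V|}(1-t)^{|\Gamma|-|U|-|V|} = t^{|U|}\bigl(t + (1-t)\bigr)^{|\Gamma|-|U|} = t^{|U|}$, collapsing the left-hand side to $\sum_{U}\hchar^{\Sigma}_U\, t^{|U|}$, as desired. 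This little telescoping identity is really the only nontrivial computation in the proof of part (i).

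For part (ii), the quickest route is to compare the two rational expressions for $\eqhilb{\rho}{\mathbb{C}[\Sigma]}{t}$ now at our disposal: the one from \eqref{eq:eqhilb_translative}, with denominator $(1-t)^{|\Gamma|}$, and the one from \eqref{eq:hilbfaces_general} in \Cref{prop:eqhilb_proper} (which applies because a color-preserving action is translative by \Cref{lem:translative action on sc}, hence proper by \Cref{lem:translative implies proper}), with denominator $(1-t)^{d+1}$. When $|\Gamma| = d+1$ the two denominators agree and, $1-t$ being a unit, the two numerators coincide as polynomials, whence $\sum_{S \subseteq \Gamma}\hchar^{\Sigma}_S\, t^{|S|} = \sum_{i=0}^{d+1}\hchar^{\Sigma}_i\, t^{i}$; extracting the coefficient of $t^i$ gives $\hchar^{\Sigma}_i = \sum_{|S|=i}\hchar^{\Sigma}_S$. (One could also argue directly from the definitions, using $\fchar^{\Sigma}_{i-1} = \sum_{|S|=i}\fchar^{\Sigma}_S$ together with $\#\{S \mid |S|=k,\ S \supseteq T\} = \binom{|\Gamma|-|T|}{k-|T|}$; this is precisely where the balancedness hypothesis $|\Gamma| = d+1$ is used, since it turns $\binom{|\Gamma|-|T|}{k-|T|}$ into the binomial coefficient appearing in the definition of $\hchar^{\Sigma}_k$.) The one point requiring genuine care throughout is verifying that the correspondence in the first paragraph is honestly $G$-equivariant, not merely a bijection of sets.
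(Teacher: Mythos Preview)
Your proof is correct and follows essentially the same approach as the paper's. The paper defers part (i) to \cite[Proposition 2.5]{AdamsReiner} (the $\mathbb{N}^{\Gamma}$-graded computation you spell out) and notes the alternative via \Cref{prop:eqhilb_proper}, while for part (ii) it simply invokes part (i) together with \Cref{prop:eqhilb_proper}---exactly your argument; you just make the multigraded bijection and the M\"obius-inversion telescoping explicit where the paper leaves them to the reference.
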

\begin{proof}
Part (i) comes from specializing \cite[Proposition 2.5]{AdamsReiner} from the $\mathbb{N}^{|\Gamma|}$-graded to the $\mathbb{N}$-graded setting. Note that the first equality in \eqref{eq:eqhilb_translative} can also be derived from equation \eqref{eq:eqhilb_proper} in \Cref{prop:eqhilb_proper} by noting that $f_S^{\Sigma} = \sum_{\gamma(\Oc) = S}f_{\Oc}^{\Sigma}$ for every $S \subseteq \Gamma$. Part (ii) is a direct consequence of part (i) and \Cref{prop:eqhilb_proper}. 
\end{proof}

Adams and Reiner build on the last statement of \Cref{lem:baclawskibjorner} in order to study the case when $\Sigma$ is the order complex of a Cohen--Macaulay poset. We note below that the same reasoning can be applied verbatim to the more general case when $\Sigma$ is a Cohen--Macaulay balanced complex.

\begin{lemma}
\label{lem:balanced pseudoeffectiveness}
Let $\Sigma$ be a finite abstract simplicial complex that is both balanced and Cohen--Macaulay over $\mathbb{C}$. Let $\gamma\colon V(\Sigma) \to \Gamma$ be a proper coloring of $\Sigma$ with $|\Gamma| = \dim(\Sigma)+1$ and let $\rho$ be a color-preserving action of the finite group $G$ on $\Sigma$. Then $\hchar_S^{\Sigma}$ is effective for every choice of $S \subseteq \Gamma$.
\end{lemma}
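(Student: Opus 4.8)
The plan is to derive the effectiveness of each $\hchar_S^{\Sigma}$ from the last assertion of \Cref{lem:baclawskibjorner} together with the classical fact that the color-selected subcomplexes of a balanced Cohen--Macaulay complex are themselves Cohen--Macaulay.

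First I would fix $S \subseteq \Gamma$ and observe that, since $\rho$ is color-preserving, it restricts to an action of $G$ on the subcomplex $\Sigma\vert_S$. Because $\Sigma$ is Cohen--Macaulay it is pure, and since the proper coloring $\gamma$ uses exactly $\dim(\Sigma)+1 = |\Gamma|$ colors, every facet of $\Sigma$ meets each color class in exactly one vertex; hence $\Sigma\vert_S$ is pure of dimension $|S|-1$ (with the convention that $\Sigma\vert_\emptyset = \{\emptyset\}$ has dimension $-1$). The crucial input is then that $\Sigma\vert_S$ is Cohen--Macaulay over $\mathbb{C}$: this is Stanley's theorem on balanced Cohen--Macaulay complexes (see, e.g., \cite[Section III.4]{StanleyCombCommAlg}; see also the work of Baclawski and of Munkres). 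Alternatively one could argue by induction on $|\Gamma|$, using the identity $\mathrm{lk}_{\Sigma\vert_S}(F) = (\mathrm{lk}_{\Sigma}(F))\vert_{S \setminus \gamma(F)}$ for $F \in \Sigma\vert_S$ and the fact that $\mathrm{lk}_{\Sigma}(F)$ is again balanced and Cohen--Macaulay with color set $\Gamma \setminus \gamma(F)$, but this still requires as separate input the homological fact that a color-selected subcomplex of a Cohen--Macaulay complex of dimension $e$ has vanishing reduced homology in degrees below $e$.

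Granting this, applying the defining Cohen--Macaulay condition to the empty face of $\Sigma\vert_S$ shows that $\widetilde{H}_i(\Sigma\vert_S, \mathbb{C}) = 0$ for every $i < |S|-1$, so that the reduced homology of $\Sigma\vert_S$ is concentrated in the single degree $k = |S|-1$. At this point I would invoke the last statement of \Cref{lem:baclawskibjorner} — legitimately, since a color-preserving action is translative by \Cref{lem:translative action on sc} and hence proper by \Cref{lem:translative implies proper} — to conclude that $\hchar_S^{\Sigma} = \homchar^{\Sigma\vert_S}_{|S|-1}$ is the character of the genuine complex $G$-representation $\widetilde{H}_{|S|-1}(\Sigma\vert_S, \mathbb{C})$, and therefore effective.

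The step I expect to be the main obstacle is establishing that each $\Sigma\vert_S$ is Cohen--Macaulay; once that is available, the rest is a direct application of \Cref{lem:baclawskibjorner}. This is also exactly the point where the balancedness hypothesis (and not merely translativity) enters, mirroring the reasoning of Adams and Reiner for order complexes of Cohen--Macaulay posets.
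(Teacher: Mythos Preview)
Your proposal is correct and follows essentially the same route as the paper: invoke Stanley's theorem that color-selected subcomplexes of a balanced Cohen--Macaulay complex are Cohen--Macaulay (the paper cites \cite[Theorem 4.3]{StanleyBalanced}), and then apply the last statement of \Cref{lem:baclawskibjorner} to conclude that $\hchar_S^{\Sigma}$ is the character of $\widetilde{H}_{|S|-1}(\Sigma\vert_S,\mathbb{C})$ and hence effective. Your additional remarks about purity, dimension, and the restriction of the action are accurate but not strictly needed beyond what the paper states.
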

\begin{proof}
Let $S \subseteq \Gamma$. By \cite[Theorem 4.3]{StanleyBalanced}, the $(|S|-1)$-dimensional color-selected subcomplex $\Sigma|_S$ is Cohen--Macaulay, and hence $\widetilde{H}_{i}(\Sigma|_S, \mathbb{C})$ vanishes for every $i < |S|-1$. Then $\hchar_S^{\Sigma}$ is the character of the $G$-representation on $\widetilde{H}_{|S|-1}(\Sigma|_S, \mathbb{C})$ from \Cref{lem:baclawskibjorner}.
\end{proof}

\begin{remark} \label{rem:negative h_S}
Note that the claim of \Cref{lem:balanced pseudoeffectiveness} does not necessarily hold if the coloring preserved by the translative action $\rho$ is not balanced. For instance, consider a hexagon endowed with the antipodal (translative) $\mathbb{Z}_2$-action. The unique associated proper coloring $\gamma$ assigns a different color to each of the three pairs of antipodal vertices. Then, for every set $S$ of two distinct colors, one has that $\hchar_S = 1 - 2(1+\chi_{\sgn}) + (1+\chi_{\sgn}) = -\chi_{\sgn}$. See also \Cref{rem:equivariant Hilbert hexagon} below.
\end{remark}

Putting together \Cref{prop:hilbfaces} and \Cref{lem:balanced pseudoeffectiveness} yields the following result, which can be thought of as a refinement of \Cref{cor:effective numerator}:

\begin{corollary}
    Let $\Sigma$ be a finite abstract $d$-dimensional simplicial complex that is both balanced and Cohen--Macaulay over $\mathbb{C}$. Let $\gamma\colon V(\Sigma) \to \Gamma$ be a proper coloring of $\Sigma$ with $|\Gamma| = d+1$ and let $\rho$ be a color-preserving action of the finite group $G$ on $\Sigma$. Then
    \begin{equation} \label{eq:rational eqHilb for balanced Sigma}
        \eqhilb{\rho}{\mathbb{C}[\Sigma]}{t} = \frac{\sum_{S \subseteq \Gamma} \hchar^\Sigma_S \cdot t^{|S|}}{(1-t)^{d+1}},
    \end{equation}
    and each $\hchar^\Sigma_S$ is effective. In particular, the numerator of the rational expression in \eqref{eq:rational eqHilb for balanced Sigma} is effective.
\end{corollary}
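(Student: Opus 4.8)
The plan is to derive the corollary as a formal synthesis of \Cref{prop:hilbfaces} and \Cref{lem:balanced pseudoeffectiveness}, with essentially no new argument needed.

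First I would check that the hypotheses are exactly those of \Cref{prop:hilbfaces}: the coloring $\gamma$ is proper, and $\rho$ is color-preserving with respect to $\gamma$ (hence translative, by \Cref{lem:translative action on sc}). Part (i) of \Cref{prop:hilbfaces} then supplies the rational expression \eqref{eq:eqhilb_translative} for $\eqhilb{\rho}{\mathbb{C}[\Sigma]}{t}$, with numerator $\sum_{S \subseteq \Gamma}\hchar^{\Sigma}_S\, t^{|S|}$ and denominator $(1-t)^{|\Gamma|}$. Since $\Sigma$ is balanced and $|\Gamma| = d+1 = \dim(\Sigma)+1$ by hypothesis, the denominator is precisely $(1-t)^{d+1}$, which is \eqref{eq:rational eqHilb for balanced Sigma}.

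Next I would invoke \Cref{lem:balanced pseudoeffectiveness} verbatim: because $\Sigma$ is balanced and Cohen--Macaulay over $\mathbb{C}$, $|\Gamma| = \dim(\Sigma)+1$, and $\rho$ is color-preserving, every $\hchar^{\Sigma}_S$ is effective --- concretely it is the character of the genuine $G$-representation on $\widetilde{H}_{|S|-1}(\Sigma|_S,\mathbb{C})$. (Internally this rests on Stanley's theorem that a color-selected subcomplex of a balanced Cohen--Macaulay complex is again Cohen--Macaulay, so that the reduced homology of $\Sigma|_S$ is concentrated in top degree, combined with \Cref{lem:baclawskibjorner}.) Finally, grouping the numerator of \eqref{eq:rational eqHilb for balanced Sigma} by degree, the coefficient of $t^i$ is $\sum_{|S|=i}\hchar^{\Sigma}_S$ --- which equals $\hchar^{\Sigma}_i$ by part (ii) of \Cref{prop:hilbfaces} --- and since effectiveness is stable under addition in $R(G,\mathbb{C})$ (a sum of effective classes being the class of the direct sum of the underlying representations), each such coefficient, and therefore the whole numerator, is effective.

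I do not expect a genuine obstacle here: the single point requiring care is that ``effective'' passes through the operations used, i.e.~summing the $\hchar^{\Sigma}_S$ over a fixed value of $|S|$, which preserves effectiveness. All of the substance has already been discharged in \Cref{lem:balanced pseudoeffectiveness} (and, upstream of it, in \Cref{lem:baclawskibjorner}), so the corollary follows formally once those results are in place.
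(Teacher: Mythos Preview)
Your proposal is correct and follows the same approach as the paper, which simply states that the corollary is obtained by ``putting together \Cref{prop:hilbfaces} and \Cref{lem:balanced pseudoeffectiveness}.'' Your write-up spells out the steps (including the closure of effectiveness under sums) a bit more explicitly than the paper does, but the argument is the same.
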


\begin{remark} \label{rem:equivariant Hilbert hexagon}
We remark here that, when the action $\rho$ preserves a proper coloring $\gamma\colon V(\Sigma) \to \Gamma$ with $|\Gamma| > \dim(\Sigma)+1$, it is still true by \Cref{thm:effective numerator for SR} that the rational expression of $\eqhilb{\rho}{\mathbb{C}[\Sigma]}{t}$ with denominator $(1-t)^{\dim(\Sigma)+1}$ has an effective numerator. This does not contradict \Cref{rem:negative h_S}; as an example, in the case of the $\mathbb{Z}_2$-equivariant Hilbert series of the $1$-dimensional hexagon $\Sigma$ from \Cref{rem:negative h_S}, we have that \[\eqhilb{\rho}{\mathbb{C}[\Sigma]}{t} = \frac{1+3\chi_{\sgn}t-3\chi_{\sgn}t^2-t^3}{(1-t)^3} = \frac{1+(1+3\chi_{\sgn})t+t^2}{(1-t)^2}.\]
\end{remark}

\section{An application to equivariant Ehrhart series}\label{sec:main}

\subsection{The equivariant Ehrhart series of a lattice polytope} \label{sub:ee}

\def\M{\widetilde{M}}
\def\Mo{M}

We recall the setup of \cite[Section 2.2]{StapledonNew}, to which we refer for proofs and background. 

\newcommand{\aff}[1]{\operatorname{Aff}(#1)}

Let $\Mo\simeq \mathbb Z^d$ be a lattice (i.e., a finitely generated free abelian group). An affine transformation of $\Mo$ is by definition a function $\phi$ of the form $x\mapsto Ax + b$ with $A\in \mathrm{GL}(\Mo)$ and $b \in \Mo$. The affine transformations form a group $\aff{\Mo}$.

Now let $\M:=\Mo \oplus \mathbb Z \simeq \mathbb{Z}^{d+1}$ and, for every $\phi\in \aff{\Mo}$ and every $x\oplus u \in \M$, let $\widetilde{\phi} (x\oplus u):=\phi(x) \oplus u$. Then $\widetilde{\phi}\in \mathrm{GL}(\M)$, and so every affine representation $\rho$ defines a linear representation $\widetilde{\rho}$. Tensoring with $\mathbb C$, any {\em affine representation} $\rho: G\to \operatorname{Aff}(\mathbb C^d)$ determines a (linear) representation $\widetilde{\rho}:G\to\gl{d+1}{\mathbb C}$ of $G$ on $\M \otimes \mathbb C \simeq \mathbb C^{d+1}$ whose image is in the subgroup of transformations preserving the affine subspace $\mathbb C^d \times\{1\} \subseteq \M \otimes \mathbb C$, see \cite[Section 2.3]{StapledonNew}.

\bigskip
 
 Let $P$ be an $\Mo$-lattice polytope that is invariant with respect to the $G$-action by affine transformations given by $\rho$. We will identify $P$ with the  polytope $P\oplus \{1\}\subseteq (\Mo\otimes \mathbb R) \oplus \{1\}$, which is $G$-invariant with respect to $\widetilde{\rho}$. Then, for every $m\in \mathbb N$, the polytope $mP\subseteq (\Mo\otimes \mathbb R) \oplus \{m\}$ is also $G$-invariant (with respect to $\widetilde{\rho}$), see \cite[Section 2.4]{StapledonNew}. In particular, for every $m$ the group $G$ acts as a permutation on the set $mP\cap \M$. Let $\chi_{mP}\in \basering{G}$ be the character of the associated complex permutation representation. 

\begin{definition}[{\cite[Definition 2.10]{StapledonNew}}] \label{def:eqehr} 
Let $\Mo\subseteq \mathbb R^{d}$ be a lattice and let $\rho\colon G\to \aff{\Mo}$ be an affine representation. With the notation above, the \emph{equivariant Ehrhart series} of a $G$-invariant $\Mo$-lattice polytope $P\subseteq \mathbb R^d$ is
$$
\eqehr{\rho}{P}{t}:=1 +\sum_{m\geq 1} \chi_{mP} t^m \in \basering{G}[[t]].
$$
\end{definition}

It is natural to look for a rational expression for the equivariant Ehrhart series of $P$, but it is not a priori obvious which denominator to choose. Backed by geometric and combinatorial considerations, Stapledon proposes to use the element $\det(\mathrm{Id} - \widetilde{\rho} t)$, where

\[\det(\mathrm{Id} - \widetilde{\rho} t) := \sum_{i=0}^{d+1}(-1)^i\chi_{\wedge^i(\widetilde{M} \otimes \mathbb{C})}t^i.\]

One then writes

\begin{equation}
    \eqehr{\rho}{P}{t} = \frac{h^*(P,\rho;t)}{\det(\mathrm{Id} - \widetilde{\rho} t)},
\end{equation}

\noindent where $h^*(P,\rho;t)$ is a \emph{power series} in $R_G[[t]]$ and not necessarily a polynomial! Note that, when all coefficients of $h^*(P,\rho;t)$ are effective, the power series $h^*(P,\rho;t)$ is actually a polynomial, as follows from considering its evaluation at $\mathrm{id}_G$: see e.g.~\cite[Lemma 2.8]{EKS}. It is currently open whether polynomiality and effectiveness of $h^*(P,\rho;t)$ are equivalent conditions; this is conjectured to be true in \cite[Conjecture 1.2]{StapledonNew}. 

\subsection{Main theorem}

We can now state and prove the main contribution of the present paper to equivariant Ehrhart theory.

\begin{theorem}\label{thm:EBM}
Let $M \subseteq \mathbb{R}^d$ be a lattice and let $P \subseteq \mathbb{R}^d$ be a $d$-dimensional $M$-lattice polytope with a unimodular lattice triangulation $\Delta$. Let $\rho\colon G \to \mathrm{Aff}(M)$ be an action of a finite group $G$ by affine transformations of $M$ preserving $\Delta$. Then the following equality holds in $\basering{G}[[t]]$:
\begin{equation} \label{eq:ehrhart_hilbert}
\eqehr{\rho}{P}{t} = \eqhilb{\rho}{\mathbb{C}[\usc{\Delta}]}{t}.
\end{equation}
Assume further that the action $\rho$ is translative on $\Delta$ and let $\gamma\colon V(\Delta) \to \Gamma$ be a proper coloring of $\Delta$ preserved by $\rho$. Then 
\begin{equation} \label{eq:translative_ehrhart}
    \eqehr{\rho}{P}{t} = \frac{\sum_{S \subseteq \Gamma} \hchar^{\Delta}_{S} t^{|S|}}{(1-t)^{|\Gamma|}} = \frac{\sum_{i=0}^{d+1}\hchar_i^{\Delta}t^i}{(1-t)^{d+1}},
\end{equation}
where each $\hchar^{\Delta}_i$ (but not necessarily each $\hchar^{\Delta}_S$) is effective. If moreover $|\Gamma|=d+1$, i.e.,~if $\gamma$ is balanced, then each $\hchar^{\Delta}_S$ in \eqref{eq:translative_ehrhart} is effective.

\end{theorem}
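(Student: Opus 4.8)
The plan is to deduce \Cref{thm:EBM} by combining an equivariant Betke--McMullen statement (the equality \eqref{eq:ehrhart_hilbert}) with the structural results of \Cref{sec:eHS_SR}. For the first equality, the key point is that a unimodular lattice triangulation $\Delta$ of $P$ provides a ``stratification'' of the lattice points of every dilate $mP$ by the relative interiors of the simplices of $\Delta$: since $\Delta$ triangulates $P$, each point of $mP \cap M$ lies in the relative interior of a unique face $m\sigma$ for some $\sigma \in \Delta$, and unimodularity of $\sigma$ means that the lattice points in the relative interior of $m\sigma$ are in bijection (via the standard ``half-open decomposition'' / Ehrhart-of-a-simplex computation, see \cite[\S 10.1]{BeckRobins}) with the degree-$m$ monomials of $\mathbb{C}[\usc{\Delta}]$ whose support is exactly $V(\sigma)$. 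The crucial observation is that this bijection is \emph{$G$-equivariant}: $\rho(g)$ carries the relative interior of $m\sigma$ to the relative interior of $m(g\sigma)$ by an affine lattice isomorphism, and on the monomial side $g$ carries monomials supported on $V(\sigma)$ to monomials supported on $V(g\sigma)$ in exactly the matching way. Summing the resulting isomorphism of $G$-permutation representations over all $\sigma \in \Delta$ and all degrees $m$ yields $\chi_{mP} = \chi_{\mathbb{C}[\usc{\Delta}]_m}$ for every $m$, which is \eqref{eq:ehrhart_hilbert}. (By \Cref{rem:disclaimer} one may instead simply cite the Jochemko--Katth\"an--Reiner argument or \cite[Proposition 4.40]{StapledonNew}; either way this is the substantive analytic step.)

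Granting \eqref{eq:ehrhart_hilbert}, the rest is bookkeeping. Assume now $\rho$ is translative on $\Delta$ with a preserved proper coloring $\gamma\colon V(\Delta) \to \Gamma$. The abstract complex $\usc{\Delta}$ is a triangulation of the $d$-dimensional polytope $P$, hence it is a $d$-dimensional complex; moreover it is Cohen--Macaulay over $\mathbb{C}$ (any triangulated ball or, more generally, any triangulated manifold-with-or-without-boundary that is a cone/ball is CM — here $P$ is a convex polytope, so $\usc{\Delta}$ is a shellable, hence Cohen--Macaulay, ball). A color-preserving action is translative by \Cref{lem:translative action on sc}, so \Cref{prop:hilbfaces}(i) applies to $\mathbb{C}[\usc{\Delta}]$ and gives
\[
\eqhilb{\rho}{\mathbb{C}[\usc{\Delta}]}{t} = \sum_{S \subseteq \Gamma}\fchar^{\usc{\Delta}}_S \frac{t^{|S|}}{(1-t)^{|S|}} = \frac{\sum_{S\subseteq\Gamma}\hchar^{\usc{\Delta}}_S\, t^{|S|}}{(1-t)^{|\Gamma|}},
\]
which is the first expression in \eqref{eq:translative_ehrhart} (writing $\hchar^{\Delta}_S$ for $\hchar^{\usc{\Delta}}_S$). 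For the second expression with denominator $(1-t)^{d+1}$ and effectiveness of each $\hchar^{\Delta}_i$: since $\usc{\Delta}$ is Cohen--Macaulay and $\rho$ is translative, \Cref{thm:effective numerator for SR} produces a $G$-invariant linear system of parameters $\theta_0,\dots,\theta_d$ (by \Cref{prop:translativity via lsop}, using the preserved coloring) and
\[
\eqhilb{\rho}{\mathbb{C}[\usc{\Delta}]}{t} = \frac{\eqhilb{\overline{\rho}}{\mathbb{C}[\usc{\Delta}]/J}{t}}{(1-t)^{d+1}};
\]
by \Cref{rem:each h_i is effective} the numerator is $\sum_{i=0}^{d+1}\hchar_i^{\usc{\Delta}}t^i$ with each coefficient effective (being the character of the degree-$i$ piece of the Artinian reduction). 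Combining with \eqref{eq:ehrhart_hilbert} gives \eqref{eq:translative_ehrhart} in full.

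Finally, if $|\Gamma| = d+1$, i.e.\ $\gamma$ is balanced, then $\usc{\Delta}$ is a balanced Cohen--Macaulay complex with a color-preserving $G$-action, so \Cref{lem:balanced pseudoeffectiveness} applies verbatim and yields that each $\hchar^{\Delta}_S$ is effective. I expect the only genuinely delicate point to be the $G$-equivariance of the lattice-point/monomial bijection underlying \eqref{eq:ehrhart_hilbert} — specifically, making sure that the half-open decomposition of $mP$ into relative interiors of dilated simplices is respected by $\rho$ and that the standard identification of lattice points in a dilated unimodular simplex with monomials is natural with respect to simplicial automorphisms; everything after that is a formal consequence of the machinery already set up in Sections~\ref{sec:eHS} and~\ref{sec:eHS_SR}.
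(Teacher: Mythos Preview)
Your proposal is correct and follows essentially the same route as the paper: an explicit $G$-equivariant bijection between lattice points of $mP$ and degree-$m$ monomials of $\mathbb{C}[\usc{\Delta}]$ via the relative-interior decomposition of the unimodular triangulation, followed by the structural results of \Cref{sec:eHS_SR} (\Cref{prop:hilbfaces}, \Cref{thm:effective numerator for SR} with \Cref{rem:each h_i is effective}, and \Cref{lem:balanced pseudoeffectiveness}). One small correction: not every triangulation of a convex polytope is shellable, so your parenthetical ``shellable, hence Cohen--Macaulay'' is unjustified; the paper instead argues (as you also do just before) that $\usc{\Delta}$ is homeomorphic to a ball and is therefore Cohen--Macaulay by the topological criterion.
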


\begin{remark}In \cite[Theorem 1.4]{StapledonNew} Stapledon proves that effectiveness  of $h^*(P,\rho;t)$ is implied by the existence of a $G$-invariant triangulation $\Delta$ of $P$, a condition that is always satisfied in the hypotheses of \Cref{thm:EBM}. If $\Delta$ is unimodular and $\rho$ is translative, we infer from \eqref{eq:translative_ehrhart} that \begin{equation}
h^*(P,\rho,t) = \frac{\det(\mathrm{Id}-\widetilde{\rho} t)\sum_{i=0}^{d+1}\hchar_i^{\Delta}t^i}{(1-t)^{d+1}}.
\end{equation}
Due to Stapledon's result, $h^*(P,\rho,t)$ is effective and polynomial; in particular, the polynomial $\det(\mathrm{Id}-\widetilde{\rho} t) \cdot \sum_{i=0}^{d+1}\hchar_i^{\Delta}t^i \in \basering{G}[t]$ is divisible by $(1-t)^{d+1}$.
\end{remark}

\begin{remark} \label{rem:disclaimer}
In an earlier draft, equation \eqref{eq:ehrhart_hilbert} was proved only in the case when the action $\rho$ is translative. Victor Reiner informed us of unpublished notes by himself, Katharina Jochemko and Lukas Katth\"an, where the general version was stated and proved. We are grateful for the kind permission to include their argument.

 In the final stages of preparation of the manuscript we also became aware that a more general form of \eqref{eq:ehrhart_hilbert}, without unimodularity assumptions, can be found in \cite[Proposition 4.40]{StapledonNew}. See also \cite[Remark 4.41]{StapledonNew}. 
\end{remark}

\begin{proof}
As usual (see \Cref{sub:ee}) we think of $P$, and thus of $\Delta$, as lying in $M\oplus\{1\}\subseteq \mathbb R^d\oplus \mathbb R$. The affine action $\rho$ on $M$ extends to a linear action $\widetilde{\rho}$ on $\widetilde{M}=M\oplus \mathbb Z$ that fixes the last coordinate. Now, for every $m\geq 1$ the simplicial complex $m\Delta$ is a lattice triangulation of $mP$ preserved by $G$, thus every lattice point $p\in \widetilde{M}\cap mP$ is contained in the relative interior of the $m$-th dilation of exactly one simplex $\sigma_p\in \Delta$, say $\sigma_p=\conv\{v_1^p,\ldots,v_k^p\}$. This means that there are unique, strictly positive real numbers $\lambda^p_1,\ldots,\lambda^p_k$ summing up to $m$ and such that
$$
p=\lambda_1^pv_1^p+\ldots + \lambda_k^pv_k^p.
$$
Unimodularity of the triangulation implies that $\lambda_i^p\in \mathbb Z_{>0}$ for all $i$. This gives a well-defined bijection
$$
f_m: p\mapsto x_{v^p_1}^{\lambda^p_{1}}\cdots x_{v^p_k}^{\lambda^p_{k}}
$$
 between lattice points in $mP$ and monomials supported on $\usc{\Delta}$ of total degree $m$, i.e., a $\mathbb C$-basis of $\mathbb C [\usc{\Delta}]_m$. Now since $\Delta$ and $m\Delta$ are preserved by the group action, for every $g\in G$ we have that $\sigma_{g(p)}=g(\sigma_p)$. Uniqueness  of the $\lambda_i^p$ implies that $g(p)=\lambda_1^pg(v^p_1)+\ldots+ \lambda_k^pg(v^p_k)$, and thus
 $$g(f_m(p))=
 x_{g(v^p_1)}^{\lambda^p_{1}}\cdots x_{g(v^p_k)}^{\lambda^p_{k}}
  = f_m(g(p)).$$
This shows that $f_m$ induces an isomorphism of representations between the (complex) permutation representation on $\widetilde{M}\cap mP$ and the representation on $\mathbb C[\usc{\Delta}]_m$. This proves the (coefficientwise) equality $\eqehr{\rho}{P}{t} = \eqhilb{\rho}{\mathbb{C}[\usc{\Delta}]}{t}$.

Let us now assume that the action $\rho$ is translative on $\Delta$. Then, by \Cref{lem:translative action on sc}, we know that there exists a proper coloring $\gamma\colon V(\Delta) \to \Gamma$ that is preserved by $\rho$. The first equality in \eqref{eq:translative_ehrhart} follows from \Cref{prop:hilbfaces}. Since every translative action is proper by \Cref{lem:translative implies proper}, the second equality in \eqref{eq:translative_ehrhart} is a consequence of \Cref{prop:eqhilb_proper}. Since the geometric simplicial complex $\Delta$ is homeomorphic to a ball in $\mathbb{R}^d$, it is a Cohen--Macaulay complex over $\mathbb{R}$ and hence over $\mathbb{C}$ as well. The statement about the effectiveness of each $\hchar_i^{\Delta}$ is then a consequence of \Cref{thm:effective numerator for SR} and \Cref{rem:each h_i is effective}. Assuming further than the proper coloring $\gamma$ is balanced, one has that each $\hchar_S^{\Delta}$ in \eqref{eq:translative_ehrhart} is effective due to \Cref{lem:balanced pseudoeffectiveness}.

\end{proof}

\begin{remark} \label{rem:alternative proof of eBM}
We offer an alternative proof that $\eqehr{\rho}{P}{t} = \eqhilb{\rho}{\mathbb{C}[\usc{\Delta}]}{t}$ in the case when the action $\rho$ is proper on $\Delta$. By \Cref{rem:evaluation at every g}, it is enough to prove that 
\begin{equation} \label{eq:main equation evaluated at g}
\eqehr{\rho}{P}{t}(g) = \eqhilb{\rho}{\mathbb{C}[\usc{\Delta}]}{t}(g)
\end{equation}
for every choice of $g \in G$. 
But for every fixed $g\in G$ the simplicial complex $\Delta^g$ triangulates the lattice polytope $P^g$ by \Cref{lem:proper triangulation}, and then we have that
 $$
 \eqehr{\rho}{P}{t}(g)
 =
 \ehr{P^g}{t}
 =
 \hilb{\mathbb{C}[\usc{\Delta^g}]}{t}
 =
 \eqhilb{\rho}{\mathbb{C}[\usc{\Delta}]}{t}(g),
 $$
 where the first equality holds by \cite[Lemma 5.2]{Stapledon}, the second is the Betke--McMullen theorem (see, e.g., \cite[Theorem 10.3]{BeckRobins}), and the third is part (ii) of \Cref{prop:eqhilb_proper}.
\end{remark}

\section{Order polytopes}\label{sec:OP}

\newcommand{\triang}[1]{\mathscr T_{#1}}

In this section, when $X$ is a finite partially ordered set, we write $J(X)$ for the poset of lower order ideals of $X$ ordered by inclusion. Such a poset is ranked by the cardinality of its elements. 

\begin{definition}
Let $X$ be a finite partially ordered set. The \emph{order polytope} of $X$ is
$$
\orp(X):=\{f\in \mathbb{R}^X \mid 0\leq f(x) \leq f(x') \leq 1 \,\, \forall x\leq x'\in X\}.
$$
\end{definition}

Order polytopes were first defined by Stanley in \cite{StanleyTwo} and have been in the focus of active research ever since. Every order polytope has a natural regular unimodular triangulation $\triang{X}$ whose simplices are indexed by chains  $\omega\colon J_1 \subsetneq \ldots \subsetneq J_k$ of order ideals of $X$. Given such a chain $\omega$ and an element $x\in X$, write \[\omega(x)=\begin{cases}\min \{i \mid x\in J_i\} & \text{if }x \in J_k\\ k+1 & \text{if }x \in X \setminus J_k\end{cases}.\]  Then the simplex associated with $\omega$ is defined by 
$$
\Delta_{\omega} = \{f\in \mathbb R^X \mid \omega(x) \leq \omega(y)
\Rightarrow 0\leq f(x)\leq f(y) \leq 1\}
$$
This triangulation was first described in \cite[\S 5]{StanleyTwo}, where it is proved that the assignment $\omega \mapsto \Delta_\omega$ defines an isomorphism of abstract simplicial complexes 
$$\phi\colon\Delta(J(X)) \to \usc{\triang{X}}.$$

\begin{definition}
The {\em standard proper coloring} of $\triang{X}$ is defined by assigning to every vertex of $\triang{X}$ a number in $\{0,\ldots,\vert X \vert\}$ equal to the rank of the corresponding element of $J(X)$. Note that the standard proper coloring of $\triang{X}$ is balanced.
\end{definition}

 Recall that an automorphism of a poset $X$ is an order-preserving bijection $X\to X$ whose inverse is also order-preserving.

 \begin{remark} \label{rem:induced action}
 Let $G$ be a group acting by automorphisms on the poset $X$. Then
 \begin{enumerate}
 \item $G$ acts on $\mathbb R^X$ as follows: the automorphism $g: X\to X$ acts by sending $f\in \mathbb R^X$ to $gf:= f\circ g^{-1}\in \mathbb R^X$, given in coordinates by $gf(x):=f(g^{-1}(x))$ for all $x\in X$;
 \item $G$ acts on $\Delta(J(X))$ sending $\omega: J_1\subsetneq\ldots \subsetneq J_k$ to $g\omega: gJ_1\subsetneq\ldots\subsetneq gJ_k$.
 \end{enumerate}
 \end{remark}

\begin{lemma} \label{lem:translative_order_polytope} Let $G$ be any finite group of automorphisms of a finite poset $X$. Then $G$ acts on $\mathbb{R}^X$ by permuting coordinates as in \Cref{rem:induced action}.(i) and on $\Delta(J(X))$ as in \Cref{rem:induced action}.(ii).  The isomorphism $\phi$ is equivariant with respect to those actions. This induces a translative action on the regular unimodular triangulation $\triang{X}$ of $\orp(X)$ that preserves the (balanced) standard proper coloring.
\end{lemma}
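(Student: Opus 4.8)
The plan is to establish the three assertions in order: that $G$ acts on $\mathbb{R}^X$ and on $\Delta(J(X))$ as described, that $\phi$ intertwines these actions, and that the resulting action on $\triang{X}$ is translative and color-preserving. The first point is essentially bookkeeping: for (i) one checks that $gf := f \circ g^{-1}$ is a left action (the inverse is needed precisely so that $(gh)f = g(hf)$), and since each $g$ is a bijection of $X$, the map $f \mapsto f \circ g^{-1}$ merely permutes the coordinates of $\mathbb{R}^X$ indexed by $X$; one should also note that this action preserves $\orp(X)$, since $g$ being an order-automorphism means $x \leq x'$ iff $g^{-1}(x) \leq g^{-1}(x')$, so the defining inequalities of $\orp(X)$ are permuted among themselves. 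For (ii) one observes that an order-automorphism $g$ sends lower order ideals to lower order ideals and preserves strict inclusions, so it acts on chains in $J(X)$, i.e.\ on faces of the order complex $\Delta(J(X))$, and this visibly respects the face relation.

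\textbf{Equivariance of $\phi$.} The heart of the argument is checking that $\phi(g\omega) = g\phi(\omega)$, i.e.\ that $\Delta_{g\omega} = g\Delta_\omega$ where $g$ acts on simplices of $\triang{X}$ via the coordinate permutation of $\mathbb{R}^X$ from (i). The clean way to do this is through the ``colouring function'' $\omega(-)\colon X \to \{1, \dots, k+1\}$ associated with a chain $\omega\colon J_1 \subsetneq \dots \subsetneq J_k$. One computes directly from the definition that for $\omega' = g\omega\colon gJ_1 \subsetneq \dots \subsetneq gJ_k$ one has $\omega'(x) = \omega(g^{-1}x)$ for all $x \in X$: indeed $x \in gJ_i$ iff $g^{-1}x \in J_i$, so $\min\{i : x \in gJ_i\} = \min\{i : g^{-1}x \in J_i\}$, and likewise $x \in X \setminus gJ_k$ iff $g^{-1}x \in X \setminus J_k$. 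Then, using the description $\Delta_\omega = \{f : \omega(x) \leq \omega(y) \Rightarrow 0 \leq f(x) \leq f(y) \leq 1\}$, we have $f \in \Delta_{g\omega}$ iff for all $x,y$ with $\omega(g^{-1}x) \leq \omega(g^{-1}y)$ one has $0 \leq f(x) \leq f(y) \leq 1$; substituting $x = g(x')$, $y = g(y')$ and recalling $gf(g x') = f(x')$... more precisely writing $f = gh$ so $f(x) = h(g^{-1}x)$, this condition becomes exactly $h \in \Delta_\omega$. Hence $\Delta_{g\omega} = g\Delta_\omega$, which is the equivariance of $\phi$ on vertices, and since $\phi$ is an isomorphism of simplicial complexes this extends to all faces.

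\textbf{Translativity and the colouring.} Finally, the action on $\triang{X}$ is translative because of \Cref{lem:translative action on sc}: it suffices to exhibit a $G$-invariant proper colouring, and the standard proper colouring does the job. A vertex of $\triang{X}$ is $\Delta_J$ for an order ideal $J \in J(X)$, coloured by $\operatorname{rk}_{J(X)}(J) = |J|$; since $|gJ| = |J|$ for any bijection $g$, this colouring is preserved, and it is proper (indeed balanced) by the cited description of the standard colouring, or equivalently because $\phi$ carries it to the rank colouring of the order complex of the ranked poset $J(X)$, which is the standard balanced colouring of an order complex. Invoking \Cref{lem:translative action on sc} gives translativity, and since the colouring is balanced with $|X|+1$ colours and $\dim \triang{X} = |X|$, the last sentence of the statement follows.

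\textbf{Main obstacle.} No step is genuinely hard; the only place demanding care is the verification $\omega'(x) = \omega(g^{-1}x)$ together with its bookkeeping consequence $\Delta_{g\omega} = g\Delta_\omega$, where one must be scrupulous about where $g$ versus $g^{-1}$ appears (the action on functions $f \mapsto f \circ g^{-1}$ on one side, the action on order ideals $J \mapsto gJ$ on the other) so that the identification $\phi$ is genuinely equivariant rather than equivariant up to an inversion. Once the compatibility of these two conventions is pinned down, everything else is immediate.
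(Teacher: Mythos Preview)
Your proposal is correct and follows essentially the same route as the paper: both arguments verify the key identity $(g\omega)(x)=\omega(g^{-1}x)$, deduce $\Delta_{g\omega}=g\Delta_\omega$ (hence the equivariance of $\phi$), and then observe that the standard colouring is preserved because $|gJ|=|J|$. Your write-up is slightly more explicit about the $g$ versus $g^{-1}$ bookkeeping and about invoking \Cref{lem:translative action on sc} for translativity, but the substance is the same.
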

\begin{proof}    Every $g\in G$ acts as an order-preserving map on the poset $X$. Thus, it sends order ideals to order ideals. In particular, the image of a chain $\omega\colon J_1\subsetneq \ldots \subsetneq J_k$ in $J(X)$ is again a chain $g\omega\colon g(J_1)\subsetneq \ldots\subsetneq g(J_k)$. In particular, for all $x,y\in X$ we have $g\omega(x) \leq g\omega(y) \Leftrightarrow \omega(g^{-1}x) \leq \omega(g^{-1}y)$. As noted in \Cref{rem:induced action}, the group $G$ acts also on $\mathbb R^X$ sending $f\in \mathbb R^X$ to $gf:= f\circ g^{-1}$. It follows that $g\Delta_{\omega} = \Delta_{g\omega}$, thus $\phi(g\omega) = g\phi(\omega)$ for all $g\in G$. In particular, the $G$-action preserves the standard triangulation $\triang{X}$.
    
    Now it is enough to prove that the $G$-action on $\mathbb{R}^{X}$ preserves the standard proper coloring on $\triang{X}$. Every vertex of $\triang{X}$ is by definition of the form $v=\Delta_{\omega}$, where $\omega\colon J_1$ is a single-element chain. The color of such a vertex is the cardinality $\vert J_1\vert$. Now for every $g\in G$ the image of the vertex $v$ is $gv=g\Delta_\omega=\Delta_{g\omega}$, where $g\omega $ is the single-element chain $ g(J_1)$. Since $g$ acts by automorphisms of $X$, one has that $\vert g(J_1) \vert = \vert J_1 \vert$, and thus $v$ and $gv$ have the same color.
\end{proof}

We are led to consider the action of $G$ on the poset $J(X)$, and we take the occasion to connect with the setup of Stanley's seminal work \cite{StanleyActions}. 
 
\begin{remark}\label{rem_St1}
Consider a ranked poset $Q$ with a unique minimal element $\hat{0}$ and unique maximal element $\hat{1}$, and write $r$ for the rank of $\hat{1}$. Any action of a group $G$ by automorphisms on $Q$ induces an action on the order complex $\Delta({Q})$. The assignment $\gamma(q)=\operatorname{rk}(q)$ defines a proper and balanced coloring of the vertices of $\Delta({Q})$. Since automorphisms of a ranked poset preserve its rank function, the action on $\Delta({Q})$ preserves this coloring. 

With this setup in mind, for $S\subseteq \{1,\ldots,r-1\}$ Stanley defines 
$$
\alpha_S^Q := \fchar_S^{\Delta(Q)}\quad\quad \beta_S^{Q} := \homchar_{S}^{\Delta({Q})} = \hchar_S^{\Delta({Q})}
$$
where the last equality holds by \Cref{lem:baclawskibjorner} (which, for order complexes of posets, is \cite[Theorem 1.1]{StanleyActions}).
\end{remark}

\begin{remark}\label{rem_St2}
In the setting of \Cref{rem_St1}, note that if $S\subseteq \{0,1,\ldots,r\}$ contains at least one of $0$ and $r$, then $\Delta({Q})\vert_S$ contains at least one of the cone points $\hat{0}$ and $\hat{1}$ and is hence contractible. This implies that its reduced homology vanishes and, thus, $\hchar_S^{\Delta(Q)} = \homchar_S^{\Delta(Q)} = 0$.
\end{remark}

\newcommand{\boh}[2]{\hchar_{#1}^{\Delta(#2)}}

\begin{theorem} \label{thm:orderp}
Let $G$ be a group of automorphisms of a finite poset $X$ and let $\rho$ be the action on $\mathbb{C}[\Delta(J(X))]$ induced by the action of $G$ on $\Delta(J(X))$.

The order polytope $\orp(X)$ is invariant with respect to the permutation representation $\rho^*$ of $G$ on $\mathbb R^X$ given on every $f\in\mathbb R^X$ by $\rho^*(g)(f)=f\circ{g^{-1}}$. The associated equivariant Ehrhart series is
$$\eqehr{\rho^*}{\orp(X)}{t} = \eqhilb{\rho}{\mathbb{C}[\Delta(J(X))]}{t}
$$
and all entries of the equivariant $h$-vector and flag $h$-vector are effective. More precisely,
\begin{equation} \label{eq:ehrhart for order polytopes}
\eqehr{\rho^*}{\orp(X)}{t} = \frac{1}{(1-t)^{d+1}}\sum_{S\subseteq [d-1]} \boh{S}{J(X)}t^{\vert S \vert}
\end{equation}
where $d=\vert X \vert$ and $\boh{S}{J(X)}$ equals the character of the $G$-representation induced on $\widetilde{H}_\ast(\Delta({J(X))\vert_S},\mathbb C)\simeq \widetilde{H}_{\vert S \vert - 1}(\Delta({J(X))\vert_S},\mathbb C) $. Moreover, the equivariant flag $h$-vector refines the equivariant $h$-vector, i.e.,~for all $i\geq 0$ one has that $\boh{i}{J(X)} = \sum_{\vert S \vert = i}\boh{S}{J(X)}$.
\end{theorem}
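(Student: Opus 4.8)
The plan is to assemble this theorem from machinery already in place rather than to prove anything genuinely new. First I would invoke \Cref{lem:translative_order_polytope}: it tells us that $G$ acts on $\mathbb{R}^X$ by permuting coordinates (this is exactly the action $\rho^*$ with $\rho^*(g)(f) = f\circ g^{-1}$), that $\orp(X)$ is invariant under this action, and that the induced action on the canonical regular unimodular triangulation $\triang{X}$ is translative and preserves the balanced standard proper coloring. The isomorphism $\phi\colon \Delta(J(X)) \to \usc{\triang{X}}$ from \cite{StanleyTwo} is $G$-equivariant, so the $G$-representation on $\mathbb{C}[\usc{\triang{X}}]$ is identified with the one on $\mathbb{C}[\Delta(J(X))]$, which is what $\rho$ denotes. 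This sets up everything needed to apply \Cref{thm:EBM}.

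Next I would apply \Cref{thm:EBM} to the lattice polytope $P = \orp(X) \subseteq \mathbb{R}^X$ (here $d = |X|$ and $\orp(X)$ is full-dimensional in $\mathbb{R}^X$), with triangulation $\Delta = \triang{X}$ and the affine action $\rho^*$: the first part of that theorem gives $\eqehr{\rho^*}{\orp(X)}{t} = \eqhilb{\rho^*}{\mathbb{C}[\usc{\triang{X}}]}{t} = \eqhilb{\rho}{\mathbb{C}[\Delta(J(X))]}{t}$, using equivariance of $\phi$ for the last identification. Because the action is translative \emph{and} the standard coloring $\gamma\colon V(\triang{X}) \to \Gamma$ is balanced (with $|\Gamma| = d+1$, one color for each rank $0, 1, \ldots, d$ in $J(X)$), the second part of \Cref{thm:EBM} yields the rational expression with denominator $(1-t)^{d+1}$ and numerator $\sum_{S \subseteq \Gamma}\hchar^{\triang{X}}_S t^{|S|}$, with each $\hchar_S$ (and hence each $\hchar_i$) effective. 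Transporting along $\phi$ replaces $\hchar^{\triang{X}}_S$ by $\boh{S}{J(X)} = \hchar^{\Delta(J(X))}_S$. Then \Cref{rem_St2}, applied to the bounded ranked poset $\widehat{J(X)}$ (or directly to $J(X)$, which already has $\hat 0 = \emptyset$ and $\hat 1 = X$ as cone points of the order complex), shows that $\boh{S}{J(X)} = 0$ whenever $S$ contains the color $0$ or the color $d$; this lets us restrict the sum to $S \subseteq \{1, \ldots, d-1\} = [d-1]$, giving formula \eqref{eq:ehrhart for order polytopes}. The identification of $\boh{S}{J(X)}$ with the character on $\widetilde{H}_{|S|-1}(\Delta(J(X))|_S, \mathbb{C})$ is \Cref{lem:baclawskibjorner} together with \Cref{lem:balanced pseudoeffectiveness} (the color-selected subcomplex of a balanced Cohen--Macaulay complex is Cohen--Macaulay, so its reduced homology is concentrated in the top degree). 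Finally, the statement that the equivariant flag $h$-vector refines the equivariant $h$-vector is exactly part (ii) of \Cref{prop:hilbfaces}, valid since $|\Gamma| = \dim(\triang{X})+1$.

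The only points requiring a little care — rather than true obstacles — are bookkeeping ones: checking that $\orp(X)$ really is a full-dimensional $\mathbb{Z}^X$-lattice polytope in $\mathbb{R}^X$ so that $d = |X|$ is the correct exponent, confirming that $\triang{X}$ is Cohen--Macaulay over $\mathbb{C}$ (it is, being a triangulation of a ball, as already observed in the proof of \Cref{thm:EBM}), and tracking the color labels carefully through $\phi$ so that the vanishing range ``$S$ meets $\{0,d\}$'' in \Cref{rem_St2} matches the indexing in \eqref{eq:ehrhart for order polytopes}. I expect the main (though mild) subtlety to be the clean statement that $\boh{S}{J(X)}$ is simultaneously (a) the character appearing in the Ehrhart numerator, (b) the equivariant flag $h$-entry of $\triang{X}$, and (c) the Betti character $\homchar^{\Delta(J(X))|_S}_{|S|-1}$ — all three identifications are furnished by the lemmas cited above, but they must be stitched together in the right order.
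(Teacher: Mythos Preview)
Your proposal is correct and follows essentially the same approach as the paper's own proof: invoke \Cref{lem:translative_order_polytope} for the setup and equivariant isomorphism $\phi$, apply \Cref{thm:EBM} in its balanced form, transport along $\phi$ to $\Delta(J(X))$, use \Cref{rem_St2} to restrict the sum to $S\subseteq[d-1]$, and cite \Cref{prop:hilbfaces}(ii) for the refinement. If anything, you are slightly more explicit than the paper in naming \Cref{lem:baclawskibjorner} and \Cref{lem:balanced pseudoeffectiveness} for the homological identification of $\boh{S}{J(X)}$, which the paper leaves implicit via \Cref{rem_St1}.
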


\begin{proof}
By \Cref{lem:translative_order_polytope}, the action $\rho^*$ on $\triang{X}$ is translative and preserves a balanced coloring, while $\rho$ describes the action on $\Delta(J(X))$ induced by the equivariant isomorphism of abstract simplicial complexes $\phi \colon \usc{\triang{X}}\simeq \Delta(J(X))$. To compute the equivariant Ehrhart series of $\orp(X)$ with respect to $\rho^*$, by \Cref{thm:EBM} it is enough to compute the equivariant Hilbert series of the abstract simplicial complex $\usc{\triang{X}}$ with respect to $\rho^*$. By the equivariant isomorphism of abstract simplicial complexes $\phi \colon \usc{\triang{X}}\simeq \Delta(J(X))$ from \Cref{lem:translative_order_polytope}, this is in turn equal to the equivariant Hilbert series of the abstract simplicial complex $\Delta(J(X))$ with respect to $\rho$. The poset $J(X)$ is bounded with $\hat{0}=\emptyset$ and $\hat{1}=X$, and it is ranked by cardinality of its elements. By \Cref{rem_St2}, we can restrict to color sets $S\subseteq[d-1]$. The claim about the equivariant flag $h$-vector refining the equivariant $h$-vector follows from \Cref{prop:hilbfaces}.(ii).
\end{proof}

\def\Aut{\operatorname{Aut}}

Following standard notation \cite{StanleyActions}, for a ranked poset $P$ of length $\ell$ and $S\subseteq \{1,\ldots, \ell\}$ we write $P_S$ for the poset induced on the set of all elements of $P$ whose rank is in $S$. This implies that $\Delta(P_S)=\Delta(P)\vert_S$.

\begin{example}\label{ex_boolean} 
Let $\mathsf{Anti}_d$ denote the poset consisting of a single antichain with $d$ elements. Then $\orp(\mathsf{Anti}_d)$ is the unit cube $[0,1]^d$.
 Consider the group $G=S_d$, the symmetric group on $d$ elements, acting on $\mathsf{Anti}_d$. The poset $J(\mathsf{Anti}_d)$ is the boolean poset $B_d$ of all subsets of $[d]$, and the induced action of $S_d$ is the natural action permuting elements of $[d]$. As in \Cref{thm:orderp}, let $\rho$ denote the corresponding action on the Stanley-Reisner ring of $\Delta(J(\mathsf{Anti}_d)) = \Delta(B_d)$ and let $\rho^*$ denote the representation of $S_d$ permuting coordinates in $\mathbb R^{\mathsf{Anti}_d}$. Then $\rho^*$ preserves $\orp(\mathsf{Anti}_d)=[0,1]^d$.
 
 The equivariant Hilbert series of $\Delta(B_d)$ with respect to $\rho$ has been computed by Adams and Reiner \cite[Example 2.8]{AdamsReiner} and, with this, we can give an alternative expression for the $S_d$-equivariant Ehrhart series of $\orp(\mathsf{Anti}_d)$ that was first computed in \cite[\S 9]{Stapledon} (see also \cite{StembridgeWeyl}):
 \begin{equation}\label{boolean}
\eqehr{\rho^*}{\orp{(\mathsf{Anti}_d)}}{t} = \frac{\sum_{\lambda} \chi_\lambda  t^{\operatorname{des}(\lambda)}}{(1-t)^{d+1}},
\end{equation}
where $\lambda$ runs across all standard Young tableaux of size $d$, $\operatorname{des}(\lambda)$ is the cardinality of the descent set of the tableau $\lambda$, and $\chi_\lambda$ is the character of the $S_d$-representation associated with the shape of $\lambda$. 
\end{example}

\begin{example}\label{ex:radiotower}
For $k>0$ let $\mathsf{Rad}_k$ be the ``radio-tower'' poset with $2k$ elements as in \Cref{RTP}, where the poset $J(\mathsf{Rad}_k)$ is also depicted. We set $A_i := (\mathsf{Rad}_k)_{\leq a_i}$, $B_i := (\mathsf{Rad}_k)_{\leq b_i}$ for all $i\geq 0$, $C_i := A_{i}\cup B_{i}$ for all $i>0$, and $C_0:=\emptyset$. On $\mathsf{Rad}_k$ we consider the action of the group $(S_2)^k$, where the $i$-th factor swaps $a_i$ and $b_i$. The induced action on $J(\mathsf{Rad}_i)$ fixes the elements $C_i$ and switches $A_i$ with $B_i$.

\begin{figure}[h!]
\begin{tikzpicture}
\node[anchor=north] (A) at (0,0) {
\begin{tikzpicture}[x=5em,y=2em]
\foreach \x in {0,2,4,7}
    \node[circle,fill=black,inner sep=0pt,minimum size=3pt] (a\x) at (0,\x) {};
\node[label=left:{$a_1$}] at (a0) {};
\node[label=left:{$a_2$}] at (a2) {};
\node[label=left:{$a_3$}] at (a4) {};
\node[label=left:{$a_{k}$}] at (a7) {};    
\foreach \x in {0,2,4,7}    
    \node[circle,fill=black,inner sep=0pt,minimum size=3pt,] (b\x) at (1,\x) {};
\node[label=right:{$b_1$}] at (b0) {};
\node[label=right:{$b_2$}] at (b2) {};
\node[label=right:{$b_3$}] at (b4) {};
\node[label=right:{$b_{k}$}] at (b7) {};
\draw (a2) -- (a4) -- (b2) -- (a0) -- (a2) -- (b4) -- (b0) -- (a2);
\draw[dotted] (a4) -- (0,5);
\draw[dotted] (b4) -- (1,5);
\draw[dotted] (a7) -- (0,6);
\draw[dotted] (b7) -- (1,6);
\draw[dotted] (a4) -- (0.25,4.5);
\draw[dotted] (b4) -- (0.75,4.5);
\draw[dotted] (a7) -- (0.25,6.5);
\draw[dotted] (b7) -- (0.75,6.5);
\node (bottom) at (0,-2) {};
\end{tikzpicture}};
\node[anchor=north] (B) at (5,0) {
\begin{tikzpicture}[x=5em,y=2em]
\foreach \x in {0,2,4}
    \node[circle,fill=black,inner sep=0pt,minimum size=3pt] (a\x) at (0,\x) {};
\node[label=left:{$A_1$}] at (a0) {};
\node[label=left:{$A_2$}] at (a2) {};
\node[label=left:{$A_3$}] at (a4) {};
\foreach \x in {0,2,4}    
    \node[circle,fill=black,inner sep=0pt,minimum size=3pt,] (b\x) at (1,\x) {};
\node[label=right:{$B_1$}] at (b0) {};
\node[label=right:{$B_2$}] at (b2) {};
\node[label=right:{$B_3$}] at (b4) {};
\foreach \x in {1,3,7}    
    \node[circle,fill=black,inner sep=0pt,minimum size=3pt,] (c\x) at (.5,\x) {};
\node[label=right:{$C_1$}] at (c1.north east) {};
\node[label=right:{$C_2$}] at (c3.north east) {};
\node[label=right:{$C_{k}$}] at (c7.north east) {};
\node[circle,fill=black,inner sep=0pt,minimum size=3pt,label=right:{$C_0=\emptyset$}] (cb) at (.5,-1) {};
\draw (cb) -- (a0) -- (c1) -- (a2) -- (c3) -- (a4);
\draw (cb) -- (b0) -- (c1) -- (b2) -- (c3) -- (b4);
\draw[dotted] (a4) -- (0.25,4.5);
\draw[dotted] (b4) -- (0.75,4.5);
\draw[dotted] (c7) -- (0.25,6.5);
\draw[dotted] (c7) -- (0.75,6.5);
\end{tikzpicture}};
\end{tikzpicture}
\caption{The ``radio-tower poset'' $\mathsf{Rad}_k$ (left-hand side) and its poset of ideals $J(\mathsf{Rad}_k)$ (right-hand side).} \label{RTP}
\end{figure}

Note that $J(\mathsf{Rad}_k)$ is ranked, the rank of $C_i$ equals $2i$ and the rank of $A_i$ and $B_i$ is $2i-1$. In order to compute the equivariant Ehrhart series of the order polytope associated with $\mathsf{Rad}_k$, we consider $S\subseteq\{1,\ldots,2k-1\}$ and we study the $G$-representation on the homology of $J(\mathsf{Rad}_k)_S$. If $S$ contains an even number, then $J(\mathsf{Rad}_k)_S$ contains some $C_i$ and is contractible. In particular, its top reduced homology is trivial and affords the zero character. 
Otherwise, $J(\mathsf{Rad}_k)_S$ is isomorphic to $\mathsf{Rad}_{\vert S \vert}$ and the associated order complex is, topologically, an $(\vert S \vert -1)$-dimensional sphere that can be realized as the boundary complex of the cross-polytope whose vertices are the standard basis vectors of $\mathbb R^{\vert S \vert}$ and their negatives. The action of a generator of $G$ corresponds to reflecting the cross-polytope about a coordinate hyperplane, and hence it reverses orientation. 
This defines an (irreducible) representation of $G$ on the (one-dimensional) top homology of $J(\mathsf{Rad}_k)_S$, isomorphic to the representation in which  $(\tau_1,\ldots,\tau_k)\in (S_2)^k$ acts on $\mathbb C$ by multiplication with $\sgn(\tau_{1})\cdots \sgn(\tau_{k})$. Call $\sgn_j$ the sign representation of the $j$-th factor in $(S_2)^k$. Then,  $\beta_S^{J(\mathsf{Rad}_k)} = \prod_{i\in S}\sgn_{\frac{i+1}{2}}$ in $\basering{G}$. 
We obtain the following expression for the equivariant Ehrhart series of $\orp(\mathsf{Rad}_k)$ with respect to the permutation action on the coordinates of $\mathbb R^{\mathsf{Rad}_k}$ (note that, since every element of this group is an involution, this is the ``usual'' permutation representation): 
$$\eqehr{\rho^*}{\orp(\mathsf{Rad}_k)}{t} = \frac{1}{(1-t)^{2k+1}}\sum_{S\subseteq [2k]_{\textrm{odd}}} t^{\vert S \vert}\prod_{i\in S}\sgn_{\frac{i+1}{2}} 
= \frac{\prod_{j=1}^{k}(1+\sgn_j t)}{(1-t)^{2k+1}}.
$$
\end{example}

\Cref{ex:radiotower} is a special case of the following general fact. Following \cite[\S 9.4]{TopologicalMethods}, the ordinal sum $P\ast Q$ of two posets is the poset on the disjoint union $P\sqcup Q$ with order relation $x\leq y$ if $x,y\in P$ and $x\leq_P y$, or $x,y\in Q$ and $x\leq_Q y$, or $x\in P, y\in Q$. In particular, the $k$-th radio-tower poset $\mathsf{Rad}_k$ is the $k$-fold ordinal sum of the $2$-element antichain. If $\Sigma_1$ and $\Sigma_2$ are two abstract simplicial complexes with no vertices in common, we denote by $\Sigma_1 \ast \Sigma_2$ their join, i.e., the abstract simplicial complex whose faces are the collections $\sigma_1 \cup \sigma_2$ with $\sigma_1 \in \Sigma_1$ and $\sigma_2 \in \Sigma_2$.

\begin{proposition} \label{prop:joinorder}
Let $X_1,X_2$ be two finite posets and consider the action  of a finite group $G$ on the ordinal sum $X_1\ast X_2$. Then $G$ must preserve $X_1$ and $X_2$, and 
$$
\eqehr{\rho^*}{\orp(X_1\ast X_2)}{t} =
(1-t)\eqehr{\rho^*_1}{\orp(X_1)}{t} 
\eqehr{\rho^*_2}{\orp(X_2)}{t} 
$$
where $\rho^*$, $\rho^*_1$, $\rho^*_2$ are the permutation representations of $G$ on $\mathbb C^X$, $\mathbb C^{X_1}$, $\mathbb C^{X_2}$ defined as in \Cref{thm:orderp}. 
\end{proposition}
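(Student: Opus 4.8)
The plan is to reduce the statement, via \Cref{thm:orderp}, to a multiplicativity property of equivariant Hilbert series of Stanley--Reisner rings under simplicial joins, and to produce the relevant joins combinatorially at the level of posets of order ideals. Write $X:=X_1\ast X_2$ and $d_i:=|X_i|$; we may assume $X_1,X_2\neq\emptyset$, the other cases being trivial. The first step is to see that $G$ must preserve $X_1$ and $X_2$: the two pieces are intrinsically recognizable inside $X$. For $x\in X_1$ the principal order ideal $X_{\leq x}$ is contained in $X_1$, so $|X_{\leq x}|\leq d_1$; for $x\in X_2$ one has $X_1\subsetneq X_{\leq x}$, so $|X_{\leq x}|\geq d_1+1$. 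Hence $X_2=\{x\in X:|X_{\leq x}|>d_1\}$, a condition preserved by every poset automorphism of $X$ (an automorphism sends principal ideals to principal ideals of the same cardinality), and so is $X_1=X\setminus X_2$. Thus $G$ acts by automorphisms on each $X_i$, and $\rho^*$ restricts to the permutation representation $\rho^*_i$ on $\mathbb{R}^{X_i}$.

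The combinatorial heart of the argument is the $G$-equivariant poset isomorphism $J(X_1\ast X_2)\cong J(X_1)\ast\bigl(J(X_2)\setminus\{\hat 0\}\bigr)$, where on the right $\ast$ denotes the ordinal sum of posets (the paper's ``join'') and $\hat 0=\emptyset$ is the minimum of $J(X_2)$. Indeed, an order ideal of $X_1\ast X_2$ is either an order ideal of $X_1$ (precisely those disjoint from $X_2$) or is of the form $X_1\cup I'$ with $I'$ a nonempty order ideal of $X_2$; these two families form a copy of $J(X_1)$ sitting entirely below a copy of $J(X_2)\setminus\{\hat 0\}$, and $g(X_1\cup I')=X_1\cup g(I')$ makes the identification equivariant. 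In the same way $J(X_2)\cong\{\hat 0\}\ast\bigl(J(X_2)\setminus\{\hat 0\}\bigr)$, with $G$ fixing $\hat 0$. The only place that asks for a little care is exactly here: one must get this decomposition right --- in particular that it is a genuine ordinal sum rather than a mere gluing at a point --- and check the equivariance, and then keep track of the single factor of $1-t$ that the cone point $\hat 0$ of $J(X_2)$ will contribute.

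Next I would transport this to simplicial complexes and Stanley--Reisner rings. Since the order complex of an ordinal sum is the simplicial join of the order complexes, the previous step gives $G$-equivariant isomorphisms $\Delta(J(X))\cong\Delta(J(X_1))\ast\Delta\bigl(J(X_2)\setminus\{\hat 0\}\bigr)$ and $\Delta(J(X_2))\cong\operatorname{cone}\Delta\bigl(J(X_2)\setminus\{\hat 0\}\bigr)$ with $G$-fixed cone point. For a simplicial join there is a $G$-equivariant isomorphism of graded $\mathbb{C}$-algebras $\mathbb{C}[\Sigma\ast\Sigma']\cong\mathbb{C}[\Sigma]\otimes_{\mathbb{C}}\mathbb{C}[\Sigma']$, whence $\eqhilb{}{\mathbb{C}[\Sigma\ast\Sigma']}{t}=\eqhilb{}{\mathbb{C}[\Sigma]}{t}\cdot\eqhilb{}{\mathbb{C}[\Sigma']}{t}$ in $\basering{G}[[t]]$, because the degree-$n$ character of a tensor product is $\sum_{i+j=n}$ of the products of characters. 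Applying this to the cone, whose extra tensor factor is the polynomial ring on the fixed cone point (equivariant Hilbert series $\tfrac{1}{1-t}$), yields $\eqhilb{}{\mathbb{C}[\Delta(J(X_2)\setminus\{\hat 0\})]}{t}=(1-t)\,\eqhilb{}{\mathbb{C}[\Delta(J(X_2))]}{t}$.

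Finally, combining the two displays of the previous paragraph with \Cref{thm:orderp} applied to each of $X$, $X_1$ and $X_2$ (with the induced actions on $\mathbb{C}[\Delta(J(\cdot))]$) gives
\[
\eqehr{\rho^*}{\orp(X_1\ast X_2)}{t}
=\eqhilb{}{\mathbb{C}[\Delta(J(X_1))]}{t}\cdot(1-t)\cdot\eqhilb{}{\mathbb{C}[\Delta(J(X_2))]}{t}
=(1-t)\,\eqehr{\rho^*_1}{\orp(X_1)}{t}\,\eqehr{\rho^*_2}{\orp(X_2)}{t},
\]
which is the claimed identity. (One can verify consistency with \Cref{ex:radiotower}: iterating this identity over the $k$-fold join of the $2$-element antichain reproduces $\eqehr{\rho^*}{\orp(X_k)}{t}=\prod_{j=1}^k(1+\sgn_j t)/(1-t)^{2k+1}$.)
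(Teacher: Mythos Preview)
Your proof is correct and takes a genuinely different, more elementary route than the paper's. The paper works with the equivariant flag $h$-vector: it decomposes each color set $S\subseteq\{1,\ldots,r_1+r_2-1\}$ into $S_1=S_{\leq r_1}$ and $S_2=\{s-r_1\mid s\in S_{>r_1}\}$, identifies $\Delta(J(X_1\ast X_2))\vert_S$ with the simplicial join $\Delta(J(X_1))\vert_{S_1}\ast\Delta(J(X_2))\vert_{S_2}$, and then uses an equivariant K\"unneth isomorphism for the reduced homology of a join to conclude that $\boh{S}{J(X_1\ast X_2)}=\boh{S_1}{J(X_1)}\cdot\boh{S_2}{J(X_2)}$. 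Plugging this product formula into \eqref{eq:ehrhart for order polytopes} and matching denominator degrees yields the identity.

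By contrast, you bypass both the color-selected homology and the K\"unneth argument entirely: after the poset decomposition $J(X_1\ast X_2)\cong J(X_1)\ast(J(X_2)\setminus\{\hat 0\})$, you use only the equivariant tensor-product isomorphism $\mathbb{C}[\Sigma\ast\Sigma']\cong\mathbb{C}[\Sigma]\otimes\mathbb{C}[\Sigma']$ and the resulting multiplicativity of equivariant Hilbert series, handling the stray $(1-t)$ via the fixed cone point $\hat 0$ of $J(X_2)$. Your argument is shorter and needs no topology beyond the ordinal-sum/join compatibility; the paper's argument, in exchange for the extra machinery, proves the finer statement that the individual flag-$h$ characters factor, not just the series.
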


\def\djxxs{\mathbf{J}_{12}}
\newcommand{\djxs}[1]{\mathbf{J}_{#1}}

\begin{proof} 
Note first that $\mathrm{Aut}(X_1 \ast X_2) \cong \mathrm{Aut}(X_1) \times \mathrm{Aut}(X_2)$, and the given action on $X_1 \ast X_2$ gives rise by restriction to actions on $X_1$ and on $X_2$. Let $r_i:=\vert X_i\vert$ for $i\in \{1,2\}$. Given $S\subseteq\{1,\ldots,r_1+r_2-1\}$, write $S_1:=S_{\leq r_1}$, $S_2:=\{s-r_1\mid s\in S_{>r_1}\}$. For every such $S$ one checks that 
$$
J(X_1\ast X_2)_S =
J(X_1)_{S_1}\ast J(X_2)_{S_2}.
$$
For brevity we write $\djxxs:= \Delta(J(X_1\ast X_2)_S) = \Delta(J(X_1\ast X_2))\vert_S$ and $\djxs{i}:= \Delta(J(X_{i})_{S_{i}}) = \Delta(J(X_i))\vert_{S_i}$ for $i\in\{1,2\}$. Since the order complex of an ordinal sum of posets is the join of the order complexes of those posets (see \cite[\S 9.4]{TopologicalMethods}), we have
\begin{equation} \label{eq:joins}
\djxxs = \djxs{1}\ast \djxs{2}.
\end{equation}
By \cite[Ch. 5, \S 2, Corollary 2.3]{CookeFinney}, for every $k$ there is an isomorphism

$$\widetilde{H}_k(\djxs{1}\ast\djxs{2}) \to \bigoplus_{i+j=k-1} \widetilde{H}_i(\djxs{1})\otimes \widetilde{H}_j(\djxs{2})$$
(recall that we use $\mathbb C$-coefficients -- although here this holds also for integer homology, as the homologies of the involved complexes are torsion-free, since posets of order ideals are distributive lattices and thus shellable \cite{Provan-decompositions}).

By \cite[Theorem 4.3]{StanleyBalanced}, the color-selected subcomplexes $\djxxs$, $\djxs{1}$ and $\djxs{2}$ are Cohen--Macaulay, hence their reduced homology is either zero or concentrated in top degree. Thus, the only nontrivial induced isomorphism in homology is
$$
\widetilde{H}_{\vert S \vert-1} 
(\djxs{1}\ast \djxs{2}) \to 
\widetilde{H}_{\vert S_1\vert-1}(\djxs{1})\otimes 
\widetilde{H}_{\vert S_2\vert -1}(\djxs{2}).
$$
Now, this isomorphism as constructed in \cite[Chapter 5, \S 1 and \S 2]{CookeFinney} is induced by a chain map that sends a chain of $\djxs{1}\ast \djxs{2}$ to the tensor product of the two associated chains of $\djxs{1}$ and $\djxs{2}$ and is thus equivariant. Thus we have $\homchar_S^{\Delta(J(X_1\ast X_2))}=
\homchar_{S_1}^{\Delta(J(X_1))}
\homchar_{S_2}^{\Delta(J(X_2))}$ and so
$$
\boh{S}{J(X_1\ast X_2)} =
\boh{S_1}{J(X_1)}
\boh{S_2}{J(X_2)}
$$
Now notice that the denominator of the rational expression on the right-hand side of \eqref{eq:ehrhart for order polytopes} for the equivariant Ehrhart series of $\orp(X_1\ast X_2)$ has degree $r_1+r_2+1$, while the degree of the analogous denominator for $\orp(X_i)$ is $r_i+1$, for $i\in\{1,2\}$. We obtain the claimed equality.

\end{proof}

\section{Alcoved polytopes} \label{sec:alcoved}

Let $W$ be an irreducible Weyl group of rank $d$ with associated root system $\Phi_W\subseteq \mathbb R^d$ and nondegenerate symmetric bilinear form $(\cdot ,\cdot)$ on $\mathbb R^d$. We will follow \cite{Bourbaki,Humphreys} as standard references about Coxeter and Weyl groups. 

Let $\AW$ be the reflection arrangement of the corresponding affine Coxeter group $\tW$. This means that $\AW$ contains all hyperplanes $H_k^\alpha:=\{x \mid (x,\alpha)=k\}\subseteq \mathbb R^d$ where $\alpha \in \Phi_W$ and $k\in \mathbb Z$. The planes of $\AW$ define a triangulation $\TT_W$ of the Euclidean space $\mathbb R^d$ into simplices (this is sometimes called the {\em Coxeter complex} of $\tW$). This simplicial complex is necessarily pure. 
An {\em alcove} of type $W$ is the relative interior of any maximal simplex of $\TT_W$, see  \cite[\S 4.3]{Humphreys}, \cite[VI.2.1]{Bourbaki}. 

\begin{lemma} The action of $\tW$ on $\TT_W$ preserves a balanced proper coloring. In particular, it is translative.
\end{lemma}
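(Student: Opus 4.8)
\emph{Plan.} The approach is to exhibit explicitly a proper coloring of $\TT_W$ by exactly $d+1$ colors that is invariant under $\tW$; balancedness is then immediate, because $\TT_W$ is pure of dimension $d$ (so $d+1=\dim(\TT_W)+1$), and translativity follows at once from \Cref{lem:translative action on sc}. Thus the whole statement reduces to producing a $\tW$-equivariant ``type'' function on the vertices of $\TT_W$.

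To construct it, I would first recall the classical structure of the affine Coxeter complex (see \cite[VI.2]{Bourbaki}, \cite[\S 4.3 and \S 4.7]{Humphreys}): writing $S$ for the set of the $d+1$ simple reflections of $\tW$ (the reflections in the walls of the fundamental alcove $A_0$), the group $\tW$ acts simply transitively on the set of alcoves, each alcove closure is a $d$-simplex, and every face of $\TT_W$ has the form $w\cdot F_T$ for a unique $w\in\tW$ and a unique $T\subsetneq S$, where $F_T\subseteq\overline{A_0}$ is the face cut out by the walls indexed by $T$ and $\operatorname{Stab}_{\tW}(F_T)=\tW_T$. This identifies the face poset of $\TT_W$ with the set of cosets $\{w\tW_T\mid w\in\tW,\ T\subsetneq S\}$ ordered by reverse inclusion, with $\tW$ acting by left multiplication, and the vertices of $\TT_W$ correspond to the cosets $w\tW_{S\setminus\{s\}}$ with $s\in S$. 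I would then set $\gamma(w\tW_{S\setminus\{s\}}):=s$. This assignment is well defined, uses $|S|=d+1$ colors, is $\tW$-invariant (left multiplication by $u\in\tW$ sends $w\tW_{S\setminus\{s\}}$ to $uw\tW_{S\setminus\{s\}}$, still of color $s$), and is a proper coloring because the vertices of the simplex $w\tW_T$ are precisely the $w\tW_{S\setminus\{s\}}$ with $s\in S\setminus T$ --- equivalently, each alcove closure carries exactly one vertex of each color --- so every simplex sees pairwise distinct colors. Applying \Cref{lem:translative action on sc} concludes the proof.

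The only genuinely non-formal input is the structure theory quoted in the second step --- simple transitivity of $\tW$ on alcoves, simpliciality of the alcoves, and the resulting $\tW$-equivariant description of the face poset in terms of cosets --- all of which are standard; once that is in place, checking that $\gamma$ is a balanced, $\tW$-invariant proper coloring is pure bookkeeping. A more hands-on but messier alternative would avoid buildings language entirely: take the fundamental coweights, each suitably rescaled by the marks of the highest (co)root, together with $0$ as the vertices of $\overline{A_0}$, color them $0,1,\dots,d$, and then check directly --- using $\tW=W\ltimes Q^\vee$ --- that the induced labeling of all alcove vertices is well defined and $\tW$-invariant. I would keep the coset argument as the main line, since it makes the equivariance of $\gamma$ transparent.
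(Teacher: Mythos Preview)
Your proof is correct and takes essentially the same approach as the paper: both construct the classical ``type'' coloring of the affine Coxeter complex---you via the coset description $w\tW_{S\setminus\{s\}}\mapsto s$ of vertices, the paper via the orbit-representative map $v\mapsto v_\Delta$ into the $d+1$ vertices of a fixed closed fundamental alcove---verify it is a balanced, $\tW$-invariant proper coloring using simple transitivity on alcoves, and then conclude translativity from \Cref{lem:translative action on sc}. These are the same coloring under the standard bijection between vertices of the fundamental alcove and elements of $S$.
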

\begin{proof}
Pick any maximal simplex $\Delta$ of $\TT_W$. Then $\Delta$ is the closure of an alcove. It  is well-known that $\Delta$ is a fundamental region for the action of $\tW$ on $\mathbb R^d$ (see, e.g., \cite[\S 4.8]{Humphreys} and \cite[VI.2.1]{Bourbaki}). 
 In particular, every vertex $v$ of the triangulation $\TT_W$ is in the orbit of a unique vertex $v_\Delta$ of $\Delta$. Moreover, the group $\tW$ acts transitively on the set of alcoves \cite[\S 4.3]{Humphreys}, and purity of $\TT_W$ implies that every simplex of $\TT_W$ is in the closure of some alcove. Thus, for any maximal simplex $\sigma$ of $\TT_W$, there is a unique $w\in \tW$ with $w\sigma=\Delta$, and hence no two vertices of $\sigma$ are in the same orbit. 
Since $\Delta$ has $d +1$ vertices, the assignment $v\mapsto v_\Delta$ gives a balanced ``coloring'' of all vertices of the arrangement that is clearly preserved by the group action. Thus the action on the vertices of the arrangement preserves a balanced proper coloring.
\end{proof}

\begin{definition}[Alcoved polytopes {\cite{LaPo2}}] \label{def:Walcoved}
A polytope $P\subseteq \mathbb R^d$ is {\em alcoved} (of type $W$) if $P$ is a union of cells of $\TT_W$. 
Then, $\TT_W$ restricts to a triangulation $\TT_P$ of $P$. 
\end{definition}

\begin{corollary}\label{cor:balalc}
Let $P$ be an alcoved polytope of type $W$ and let $G$ be a subgroup of $\tW$ that preserves $P$. Then $G$ acts on $\TT_P$ preserving a balanced proper coloring.
\end{corollary}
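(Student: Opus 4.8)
The plan is to bootstrap everything from the lemma just established, which provides $\TT_W$ with a $\tW$-invariant balanced proper coloring $\gamma\colon V(\TT_W)\to\Gamma$ with $|\Gamma|=d+1$ (concretely, $\gamma$ sends a vertex to the unique vertex of a fixed alcove $\Delta$ lying in its $\tW$-orbit). First I would check that $G$ genuinely acts on $\TT_P$: by \Cref{def:Walcoved} one has $\TT_P=\{\sigma\in\TT_W\mid\sigma\subseteq P\}$, and since $G$ is a subgroup of $\tW$ it preserves $\TT_W$, while it preserves $P$ by hypothesis; hence it preserves this subcomplex, i.e.\ acts on $\TT_P$ via simplicial automorphisms.

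Next I would restrict $\gamma$ to $V(\TT_P)\subseteq V(\TT_W)$. Properness is inherited by subcomplexes, and $G$-invariance of the restriction follows from $\tW$-invariance of $\gamma$ because $G\leq\tW$; so $\gamma|_{V(\TT_P)}$ is a $G$-invariant proper coloring of $\TT_P$. The only assertion that actually requires an argument is that this restricted coloring is \emph{balanced} in the sense of \Cref{df:propercoloring}, i.e.\ that it uses exactly $\dim(\TT_P)+1$ colors. For this I would invoke that an alcoved polytope of type $W$ is full-dimensional (it is a union of closed $W$-alcoves), so $\TT_P$ is a pure $d$-dimensional complex each of whose facets is a maximal simplex of $\TT_W$, hence the closure of a $W$-alcove; such a facet has $d+1$ vertices carrying, under $\gamma$, $d+1$ pairwise distinct colors, and since only $d+1$ colors are available in total these must exhaust $\Gamma$. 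Thus $\gamma|_{V(\TT_P)}$ is balanced. Finally, since a color-preserving action is translative by \Cref{lem:translative action on sc} and we have just exhibited a $G$-invariant proper coloring of $\TT_P$, the $G$-action on $\TT_P$ is translative as well, which is the last claim of the corollary.

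The argument is essentially bookkeeping and I do not anticipate a genuine obstacle; the one point deserving care is the balancedness of the restricted coloring, which rests precisely on the (standard) full-dimensionality of alcoved polytopes, guaranteeing that the facets of $\TT_P$ are honest $W$-alcoves and therefore see every available color.
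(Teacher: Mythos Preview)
Your proof is correct and is precisely the argument the paper has in mind; the paper in fact leaves \Cref{cor:balalc} without proof, treating it as an immediate consequence of the preceding lemma, and what you have written is exactly the natural unpacking of that implication. One small remark: the corollary as stated does not assert translativity separately, so your final sentence is proving more than is claimed (though of course translativity does follow from \Cref{lem:translative action on sc}, and is what is actually used downstream in \Cref{thm:Walcove}).
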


Choose a maximal simplex $\Delta_0$ of $\TT_W$ containing the origin. Choose $\alpha_1,\ldots,\alpha_d\in \Phi_W$ such that $H_0^{\alpha_i}$ are walls of $\Delta_0$  and $(\alpha_i,\Delta_0)\geq 0$.  Then $\alpha_1,\ldots,\alpha_d$ is a valid choice for a system of {\em simple roots} of $W$ and is a basis of $\mathbb R^d$ (\cite[\S 4.3]{Humphreys}, \cite[VI.2.1]{Bourbaki}). Define $\omega_1,\ldots,\omega_d$ via $(\omega_i,\alpha_j)=\delta_{ij}$. The $\omega_i$ are the {\em fundamental coweights} of $W$. Since $W$ is irreducible, there is a unique root $\theta$ of maximal height\footnote{The height of $\alpha\in \Phi_W$ is $(\alpha, \omega_1+\ldots+\omega_d)$.}. Let $\overline{\omega}_i:=\omega_i/(\theta,\omega_i)$. It is known (\cite[\S 4.9]{Humphreys}, \cite[VI.2.2, Corollary]{Bourbaki}) 
that 
$$
\Delta_0 = \operatorname{conv} \{0,\overline{\omega_1},\ldots,\overline{\omega_d}\}. 
$$

\begin{definition}
Let $M_W$ denote the lattice in $\mathbb R^d$ generated by $\overline{\omega_1},\ldots,\overline{\omega_d}$.
\end{definition}

The lattice $M_W$ is the {\em coweight lattice} of $W$. 
The action of $W$ on $\mathbb R^d$ restricts to an action on $M_W$ by unimodular linear transformations and the action of $\tW$ restricts to an action on $M_W$ by affine automorphisms 
(see \cite[\S 2.9.(2) and \S 4.9]{Humphreys} and \cite[VI.2.2]{Bourbaki}).

\begin{lemma}\label{lem:AlUnTri}
The triangulation $\TT_W$ is unimodular with respect to the lattice $M_W$.
\end{lemma}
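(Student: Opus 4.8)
The plan is to reduce everything to the single fundamental alcove $\Delta_0$ and then transport unimodularity around using the group action. First I would check that $\Delta_0 = \operatorname{conv}\{0,\overline{\omega_1},\ldots,\overline{\omega_d}\}$ is unimodular with respect to $M_W$. Taking $0$ as the base vertex, the edge vectors are exactly $\overline{\omega_1},\ldots,\overline{\omega_d}$, which by the very definition of $M_W$ form a $\mathbb Z$-basis of $M_W$; since $\Delta_0$ is full-dimensional its affine span is all of $\mathbb R^d$, so the relevant sublattice $(\operatorname{aff}\Delta_0)\cap M_W$ is $M_W$ itself. Hence $\Delta_0$ is unimodular.

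Next I would invoke the two standard structural facts recalled just above the statement: the affine Coxeter group $\tW$ acts transitively on the alcoves of $\TT_W$ (see \cite{Humphreys,Bourbaki}), and $\tW$ restricts to an action on $M_W$ by affine automorphisms. Transitivity on alcoves, together with purity of $\TT_W$, gives that every maximal simplex $\sigma$ of $\TT_W$ equals $w\Delta_0$ for some $w\in\tW$. Writing $w(x)=Ax+b$ with $A$ the linear part (an element of $W$, hence a unimodular automorphism of $M_W$) and $b\in M_W$, one has $w\Delta_0 = \operatorname{conv}\{w(0),w(\overline{\omega_1}),\ldots,w(\overline{\omega_d})\}$, whose edge vectors based at $w(0)$ are $w(\overline{\omega_i})-w(0)=A\overline{\omega_i}$. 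Since $A$ is a bijection of $M_W$ and $(\overline{\omega_i})_i$ is a $\mathbb Z$-basis, $(A\overline{\omega_i})_i$ is again a $\mathbb Z$-basis of $M_W$, and $\operatorname{aff}(w\Delta_0)=\mathbb R^d$ because $w$ is an affine bijection. Therefore $w\Delta_0$ is unimodular. In particular every vertex of $\TT_W$ lies in $M_W$, being in the $\tW$-orbit of a vertex of $\Delta_0$, so $\TT_W$ is genuinely a lattice triangulation for $M_W$.

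Finally, since every simplex of $\TT_W$ is a face of some maximal simplex $w\Delta_0$, and faces of unimodular simplices are unimodular (as already used in the proof of \Cref{lem:proper triangulation}), every simplex of $\TT_W$ is unimodular, which is exactly the claim.

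As for the main obstacle: there is no deep one here. The mathematical content is entirely carried by the two cited facts about affine Weyl groups (simple transitivity on alcoves and preservation of the coweight lattice). The only point requiring a little care is the bookkeeping around the definition of a unimodular simplex, namely that it must be applied with the sublattice $(\operatorname{aff}\sigma)\cap M_W$ (which here is always all of $M_W$ because the simplices are full-dimensional), and that the translation part $b$ of $w$ genuinely lies in $M_W$ so that $w$ is an automorphism of the lattice and not merely of $\mathbb R^d$ — both guaranteed by the facts recalled above.
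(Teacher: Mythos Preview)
Your proposal is correct and follows essentially the same route as the paper: show $\Delta_0$ is unimodular by the definition of $M_W$, then transport this to every maximal simplex via the transitive action of $\tW$ on alcoves and the fact that $\tW$ acts on $M_W$ by affine automorphisms. Your version is slightly more explicit (writing out $w(x)=Ax+b$ and noting that faces of unimodular simplices are unimodular), but the argument is the same.
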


\begin{proof} $\Delta_0$ is unimodular by definition. All other maximal simplices of $\TT_W$ are the image of $\Delta_0$ by some element $g$ of $\tW$. Since $g$ is an affine automorphism, the fact that $\{\overline{\omega_1},\ldots,\overline{\omega_d}\}$ is a lattice basis of $M_W$ implies that $\{g(0)-g(\overline{\omega_1}),\ldots, g(0)-g(\overline{\omega_d})\}$ is a lattice basis as well. Therefore $g(\Delta_0)$ is unimodular. 
\end{proof}

\begin{theorem}\label{thm:Walcove}
    Let $P$ be an alcoved polytope of type $W$ and let $G$ be a subgroup of $\tW$ preserving $P$. Then $P$ is an $M_W$-lattice polytope and the equivariant Ehrhart series of $P$ equals the equivariant Hilbert series of the induced $G$-action on the simplicial complex $\TT_P$. Morever, all entries of the equivariant $h$-vector and equivariant flag $h$-vector are effective.
\end{theorem}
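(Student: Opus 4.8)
The plan is to deduce the theorem from \Cref{thm:EBM} by verifying its hypotheses one at a time: that $P$ is a $d$-dimensional $M_W$-lattice polytope carrying the unimodular lattice triangulation $\TT_P$, that $G$ acts on $\mathbb{R}^d$ by affine transformations preserving $M_W$ and $\TT_P$, and that this action is translative and preserves a balanced proper coloring of $\TT_P$. Once these are in place, the two displayed rational expressions and the effectiveness of all $\hchar^{\TT_P}_i$ and all $\hchar^{\TT_P}_S$ are literally the conclusions of \Cref{thm:EBM}.

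First I would check the lattice-theoretic statements. Since $\Delta_0=\conv\{0,\overline{\omega_1},\ldots,\overline{\omega_d}\}$ has its vertices in $M_W$ and every maximal simplex of $\TT_W$ is the image of $\Delta_0$ under some element of $\tW$ acting affinely on $M_W$, every vertex of $\TT_W$ --- hence of the subcomplex $\TT_P$ --- lies in $M_W$. As $P$ is a union of cells of $\TT_W$, it equals the support of $\TT_P$, so $P$ is an $M_W$-lattice polytope and $\TT_P$ is a lattice triangulation of it; I would phrase the theorem for full-dimensional $P$ (otherwise one restricts to the affine span of $P$), so that $\dim P=d$. By \Cref{lem:AlUnTri} the triangulation $\TT_W$ is unimodular with respect to $M_W$, and since faces of unimodular simplices are unimodular, the subcomplex $\TT_P$ is unimodular as well.

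Next I would check the statements about the $G$-action. As $G\leq\tW$, it acts on $\mathbb{R}^d$ by the restriction of the affine action of $\tW$, which preserves $M_W$; moreover $G$ preserves $\TT_W$ (because $\tW$ does) and preserves $P$ by hypothesis, hence it preserves the restricted complex $\TT_P$. Thus we have an action $\rho\colon G\to\operatorname{Aff}(M_W)$ preserving the unimodular lattice triangulation $\TT_P$ of the $d$-dimensional $M_W$-lattice polytope $P$, and the first part of \Cref{thm:EBM} gives the equality between the equivariant Ehrhart series of $P$ and the equivariant Hilbert series of $\mathbb{C}[\usc{\TT_P}]$. Finally, \Cref{cor:balalc} provides a balanced proper coloring $\gamma\colon V(\TT_P)\to\Gamma$ with $|\Gamma|=d+1$ preserved by $G$; by \Cref{lem:translative action on sc} this makes the $G$-action on $\TT_P$ translative, and the remaining conclusions of \Cref{thm:EBM} applied with the balanced coloring $\gamma$ give
\[
\eqehr{\rho}{P}{t}=\frac{\sum_{S\subseteq\Gamma}\hchar^{\TT_P}_S\,t^{|S|}}{(1-t)^{d+1}}=\frac{\sum_{i=0}^{d+1}\hchar^{\TT_P}_i\,t^i}{(1-t)^{d+1}},
\]
with every $\hchar^{\TT_P}_i$ effective and, since $\gamma$ is balanced, every $\hchar^{\TT_P}_S$ effective too. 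I do not anticipate a real obstacle: all the substance has been front-loaded into \Cref{thm:EBM} and into the preceding lemmas on $\TT_W$, and the only points needing a little care are the bookkeeping that $G$ genuinely lands in $\operatorname{Aff}(M_W)$ and preserves $\TT_P$, and the elementary observation that the vertices of $\TT_W$ are all points of $M_W$.
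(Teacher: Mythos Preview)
Your proposal is correct and follows exactly the paper's approach: the paper's proof is a single sentence saying that the claim follows from \Cref{thm:EBM}, whose hypotheses are verified via \Cref{cor:balalc} and \Cref{lem:AlUnTri}. You have simply spelled out in more detail the bookkeeping behind these citations (that vertices of $\TT_W$ lie in $M_W$, that $\TT_P$ inherits unimodularity, that $G\leq\tW$ preserves $M_W$ and $\TT_P$, and that \Cref{cor:balalc} feeds into \Cref{lem:translative action on sc}), which the paper leaves implicit.
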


\begin{proof}
    The claim follows from an application of \Cref{thm:EBM}, whose hypotheses are satisfied by \Cref{cor:balalc} and \Cref{lem:AlUnTri}.
\end{proof}

\def\bV{\epsilon}

\begin{remark}\label{twoalcoves}
If the Weyl group $W$ is not specified, type $A$ is generally understood (e.g., as in \cite{LaPo}). In this case, say for type $A_{d+1}$, the  construction of the arrangement $\AWA{d+1}=\{H_k^\alpha\}_{k\in \mathbb Z, \alpha\in \Phi_W}$ described at the beginning of the section is an arrangement in the linear subspace $V\subseteq \mathbb R^{d+1}$ spanned by the vectors $\alpha_i:=\bV_i-\bV_{i+1}$, $1\leq i\leq d$ (we use $\bV_{i}$ for the standard basis vectors of $\mathbb R^{d+1}$ for better clarity), and the bilinear form $(\cdot \mid \cdot)$ is defined for every $v,w\in V$ as $(v\mid w)=\frac{v^Tw}{d+1}$ \cite[Plate 1, p.~265]{Bourbaki}. 

In the context of alcoved polytopes (of type $A$) it is customary \cite{LaPo} to rather consider triangulations of $\mathbb R^{d}$ defined by the arrangement  $\mathscr A_d$ with hyperplanes
\begin{equation}\label{eq:Ad}
\begin{array}{rl}
H^{i,j}_k &=\{x_i-x_j = k \mid k\in \mathbb Z\},
\textrm{ for }1\leq i<j\leq d\\[.5em]
H^{i}_k &=\{x_i = k \mid k\in \mathbb Z\}, \textrm{ for }1\leq i\leq d
\end{array}
\end{equation}
This is justified, as $\mathscr A_d$ is the image of $\AWA{d+1}$ under the nonsingular linear transformation $f: V \to \mathbb R^d$ sending the basis $\alpha_1,\ldots,\alpha_d$ of $V$ to the basis $\beta_1,\ldots,\beta_d$ of $\mathbb R^d$ given by
$\beta_i = ({e_i-e_{i+1}})({{d+1}})$ for $1\leq i \leq d-1$  and $\beta_d={(e_1+e_2+\ldots + e_d)}({{d+1}})$  (we use $e_i$ for the standard basis of $\mathbb R^d$, the image of $f$).

Indeed, for any root $\alpha = \bV_{i}-\bV_{j}$ of $A_{d+1}$ (say $i<j$), the hyperplane $H_{k}^\alpha$ consists of all vectors $y_1\alpha_1 + \ldots + y_d\alpha_d \in V$ satisfying one of the following conditions, depending on the values of $i,j$.
$$
\begin{array}{l|l}
\alpha=\epsilon_i-\epsilon_j & \textrm{ equation of } H_k^{\alpha} \textrm{ in the basis }(\alpha_l)_l\\\hline
1< i < j < d+1 &
(y_i - y_{i-1}) - (y_j-y_{j-1}) = k(d+1)\\
i=1,\, j<d+1 & 
y_1 - (y_j-y_{j-1})= k(d+1) \\
i>1,\,j=d+1 &
(y_i-y_{i-1}) + y_d=k(d+1)\\
i=1,\, j=d+1 & y_1+y_d = k(d+1)
\end{array}
$$
Since a vector with coordinates  $(y_1,\ldots,y_d)$ with respect to the basis $\beta$ has coordinates  $$({d+1})(y_1+y_d,(y_2-y_1)+ y_d, (y_3-y_2)+y_d,\ldots, (y_{d-1}-y_{d-2})+y_d,y_{d} - y_{d-1})$$ with respect to the standard basis of $\mathbb R^{d}$, then $f(H_{k}^{\alpha})$ is the set of points in $\mathbb R^{d}$ of the form $x_1e_1+\ldots +x_de_d$ with

$$
\begin{array}{l|l}
(i,j) & \textrm{ equation of } H_k^{(i,j)} \textrm{ in the basis }(e_l)_l\\\hline
1< i < j < d+1 & x_i-x_j = k \\
i=1,\, j<d+1 & x_1-x_j = k\\
i\geq 1, j=d+1 & x_i =k 
\end{array}
$$

In particular, $f(M_W)=\mathbb Z^d$, thus $f$ defines an isomorphism of lattices $M_W\to \mathbb Z^d$ that maps any $A_{d+1}$-alcoved polytope $P$ in the sense of \cite{LaPo2} (see \Cref{def:Walcoved}) into a $\mathbb Z^d$-lattice polytope $f(P)\subseteq \mathbb R^d$ that is alcoved in the sense of \cite{LaPo}, where alcoves are defined as the chambers of the arrangement $\AA_{d}$ described in \Cref{eq:Ad}. Unimodularity of $\TT_W$ with respect to $M_W$ then implies $\mathbb Z^d$-unimodularity of the triangulation by chambers of $\AA_{d}$.
\end{remark}

\begin{theorem} \label{prop:alcoved}
Let $P\subseteq \mathbb R^d$ be a polytope that is alcoved by the hyperplanes in \eqref{eq:Ad} and let $\TT$ the resulting triangulation of $P$. Let a group $G$ act on $\mathbb R^d$ by permuting the coordinates. If $P$ is preserved by the action, then the induced action on $\TT$ is translative and preserves a balanced proper coloring. In particular, the equivariant Ehrhart series of $P$ equals the equivariant Hilbert series of the induced $G$-action on $\TT$ and all entries of the equivariant $h$-vector and equivariant flag $h$-vector are effective.
\end{theorem}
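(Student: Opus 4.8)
The plan is to check that the triple $(P,\TT,G)$ satisfies every hypothesis of \Cref{thm:EBM} — in fact its strongest, ``balanced'', form — and then read off the conclusion. Write $\rho$ for the (affine, indeed linear) action of $G$ on $\mathbb R^d$ by permuting coordinates; it restricts to an action on $\mathbb Z^d$ and, as explained below, on $\TT$. Three points need verification: that $\TT$ is a unimodular $\mathbb Z^d$-lattice triangulation of $P$; that $\rho$ acts on $\TT$ by affine automorphisms of $\mathbb Z^d$; and that this action is translative and preserves a balanced proper coloring of $\TT$. The first is the last sentence of \Cref{twoalcoves} (via \Cref{lem:AlUnTri}): the triangulation of $\mathbb R^d$ by the chambers of $\mathscr{A}_d$ is $\mathbb Z^d$-unimodular, and $\TT$ is its restriction to $P$ (we take $P$ full-dimensional, as is standard for alcoved polytopes). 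For the second, a coordinate permutation is a permutation matrix, hence a $\mathbb Z$-linear — in particular affine — automorphism of $\mathbb Z^d$; it carries each hyperplane $H^{i,j}_k$ (respectively $H^i_k$) of \eqref{eq:Ad} to another hyperplane of the same shape, so it preserves $\mathscr{A}_d$, hence the chamber triangulation of $\mathbb R^d$, and — since $G$ preserves $P$ — also $\TT$.

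The crux is producing the $G$-invariant balanced proper coloring. I would use $c\colon \mathbb Z^d\to\Gamma:=\mathbb Z/(d+1)$ given by $c(v):=v_1+\dots+v_d \bmod (d+1)$. Permuting the entries of $v$ leaves their sum unchanged, so $c$ is invariant under the coordinate action of the full symmetric group, a fortiori under $G$. To see that $c$ restricts to a proper coloring of the chamber triangulation of $\mathscr{A}_d$, I would invoke the standard description of its alcoves: every alcove arises from the fundamental one $\{0\le x_1\le\dots\le x_d\le 1\}$ by first permuting coordinates and then translating by a vector of $\mathbb Z^d$ (this follows by writing $x_i=\lfloor x_i\rfloor+t_i$ and observing that inside a unit box $\mathscr{A}_d$ restricts to the braid arrangement in the $t_i$). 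The vertices of the fundamental alcove are the points $(0^k,1^{d-k})$ ($k$ zeros followed by $d-k$ ones), $k=0,\dots,d$, whose coordinate sums $d,d-1,\dots,0$ are pairwise distinct modulo $d+1$; permuting coordinates preserves these sums and translating by $w\in\mathbb Z^d$ shifts them all by the same constant $w_1+\dots+w_d$, so $c$ restricts to a bijection from the vertex set of every alcove onto $\Gamma$. Hence $c|_{V(\TT)}$ is proper; since $P$ is $d$-dimensional, $\TT$ has $d$-simplices and all $d+1$ colors occur, so the coloring is balanced. By \Cref{lem:translative action on sc}, the existence of a $G$-invariant proper coloring makes the action of $G$ on $\TT$ translative.

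With these checks in place, \Cref{thm:EBM} gives $\eqehr{\rho}{P}{t}=\eqhilb{\rho}{\mathbb{C}[\usc{\TT}]}{t}$, and, because the action is translative with $G$-invariant balanced coloring $c$ (so $|\Gamma|=d+1$),
\[
\eqehr{\rho}{P}{t}=\frac{\sum_{S\subseteq\Gamma}\hchar^{\TT}_S\,t^{|S|}}{(1-t)^{d+1}}=\frac{\sum_{i=0}^{d+1}\hchar^{\TT}_i\,t^i}{(1-t)^{d+1}},
\]
with every $\hchar^{\TT}_i$ effective and, by balancedness, every $\hchar^{\TT}_S$ effective — exactly the asserted statement about the equivariant $h$-vector and flag $h$-vector. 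I expect the only step needing genuine care to be the claim that each alcove of $\mathscr{A}_d$ is a translate-and-permutation of the fundamental alcove, i.e.\ the combinatorics of the type-$A$ braid/affine arrangement; everything else is formal bookkeeping. An alternative to the explicit coloring would be to identify $G$ with a subgroup of the affine Coxeter group attached to $\mathscr{A}_d$ and invoke \Cref{cor:balalc}, but the coordinate-sum coloring is more direct and avoids chasing the linear isomorphism of \Cref{twoalcoves}.
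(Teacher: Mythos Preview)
Your proof is correct and reaches the conclusion via a genuinely different route from the paper. The paper argues Coxeter-theoretically: it tracks the linear isomorphism $f$ of \Cref{twoalcoves} to identify the coordinate-permutation group $S_d$ (type $A_{d-1}$) with a standard parabolic subgroup of the finite Weyl group $A_d$, hence with a subgroup of the affine group $\widetilde{A}_d$ acting on $\TT_W$; then \Cref{cor:balalc} supplies the invariant balanced coloring abstractly, and \Cref{thm:Walcove} (equivalently \Cref{thm:EBM}) finishes. You instead bypass the isomorphism $f$ entirely by exhibiting the explicit coloring $c(v)=\sum_i v_i \bmod (d+1)$ and verifying properness via the decomposition of $\mathbb R^d$ into integer-translated copies of the unit cube, each sliced by the braid arrangement. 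Your approach is more elementary and self-contained, and the explicit coloring may be convenient for concrete computations; the paper's approach, on the other hand, makes transparent that the type-$A$ statement is a specialization of the general \Cref{thm:Walcove}, at the cost of chasing through the change of coordinates. You correctly anticipate this alternative in your final paragraph.
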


\begin{proof} Recall the setup of \Cref{twoalcoves} and \cite[Plate 1, p.~265]{Bourbaki}. The vectors $\alpha_1,\ldots,\alpha_d$ are a system of simple roots of a Coxeter system of type $A_{d}$, numbered so that $\alpha_i$ is adjacent to $\alpha_{i+1}$ in the Dynkin diagram of $A_d$, for all $0<i<d$. Therefore $\alpha_1,\ldots,\alpha_{d-1}$ is the system of simple roots of a Weyl group of type $A_{d-1}$ generating a standard parabolic subgroup of the original system. Notice that $f$ maps  $\alpha_1,\ldots,\alpha_{d-1}$ to $\beta_1,\ldots,\beta_{d-1}$, and the vectors $\{\beta_i/(d+1)\}_{i=1,\ldots,d-1}$ are a standard choice of simple roots for the representation of $A_{d-1}$ in $\mathbb R^d$ permuting the coordinates. In particular, the action of the symmetric group permuting coordinates in $\mathbb R^d$ corresponds via the map $f$ to the action of a parabolic subgroup of $\widetilde{A}_{d}$ on the chambers of $\AWA{d}$ and is, thus, translative on the chambers of $\AA_{d}$.
\end{proof}

\begin{remark}\label{rem:ORPA} The order polytopes considered in \Cref{sec:OP} are alcoved by the hyperplanes in \Cref{eq:Ad}. In this sense, the effectiveness claim of \Cref{thm:orderp}
can be obtained as a consequence of \Cref{prop:alcoved}. 
\end{remark}

We close our discussion by mentioning a special class of alcoved polytopes of type $A$ recently introduced by Sanyal and Stump.

\begin{definition}[{\cite[\S 1]{SaSt}}]\label{def:lip}
The {\em Lipschitz polytope} of a finite poset $X$ is 
$$
\Lip(X)= \{f\in \mathbb R^{X} \mid 0\leq f(x) \leq 1 \textrm{ for }x\in \min X;\ 
0\leq f(y) - f(x) \leq 1 \textrm{ for }x\lessdot y\}
$$
(where the notation ``$x \lessdot y$'' means that $y$ covers $x$, i.e.: $x < y$ and there is no $z \in X$ such that $x < z < y$).

The polytope $\Lip(X)$ is full-dimensional in $\mathbb R^X$ and it is alcoved by the hyperplanes of the arrangement $\AA_d$ described in \Cref{eq:Ad}. Call $\TT_X$ the resulting triangulation of $\Lip(X)$.
\end{definition}

The polytopes $\Lip(X)$ are $\mathbb Z^X$-lattice polytopes, and they are alcoved  in the sense of \Cref{twoalcoves}. 
Now let $G$ be a group of poset automorphisms of $X$. Then every $g\in G$ is a permutation of the set $X$ that preserves the order relation (in particular, since $X$ is finite, it permutes the minimal elements of $X$ and preserves covering relations). 

\begin{remark}
Let $\rho^*$ be the representation of $G$ on $\mathbb R^X$ given by $\rho^*(g)(f)=f\circ g^{-1}$ as in \Cref{rem:induced action}.(i). A straightforward check of \Cref{def:lip} shows that $\Lip(gX)=\rho^*(g)\Lip(X)$, hence $\Lip(X)$ is invariant under the $G$-representation $\rho^*$.
\end{remark}
  
  We obtain the following corollary to \Cref{thm:EBM} via 
 \Cref{prop:alcoved}.

\begin{corollary}
Let $\rho \colon G \to \mathrm{Aut}(X)$ be a group homomorphism defining an action of $G$ on a finite poset $X$ via poset automorphisms.
Then $G$ acts on the $\mathbb Z^X$-lattice polytope $\Lip(X)$ via the (permutation) representation $\rho^*$. 
The equivariant Ehrhart series of $\Lip(X)$ with respect to $\rho^*$ equals the equivariant Hilbert series of the induced $G$-action on the simplicial complex $\TT_X$. Morever, all entries of the equivariant $h$-vector and equivariant flag $h$-vector are effective.
\end{corollary}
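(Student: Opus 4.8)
The plan is to show that the pair $(\Lip(X),\rho^*)$ satisfies the hypotheses of \Cref{prop:alcoved} and then to read off the conclusion from that theorem. Write $d := |X|$. By \Cref{def:lip}, $\Lip(X)$ is a full-dimensional $\mathbb Z^X$-lattice polytope in $\mathbb R^X$ that is alcoved by the hyperplanes of the arrangement $\AA_d$ from \eqref{eq:Ad}, with the $d$ coordinates of $\mathbb R^d$ indexed by the elements of $X$; the triangulation $\TT_X$ is the one cut out on $\Lip(X)$ by these hyperplanes.

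First I would check that $\rho^*$ is an action of $G$ on $\mathbb R^X$ by permutation of coordinates. Unwinding the definition in \Cref{rem:induced action}.(i), for $g \in G$ and $f \in \mathbb R^X$ one has $(\rho^*(g)f)(x) = f(g^{-1}x)$ for all $x \in X$, so $\rho^*(g)$ is exactly the linear automorphism of $\mathbb R^X$ that permutes the coordinate positions according to the permutation of $X$ induced by $g$. This is precisely the kind of action featured in the hypothesis of \Cref{prop:alcoved}.

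Next I would check that $\Lip(X)$ is preserved by $\rho^*$. This is the content of the Remark immediately preceding the statement: $\Lip(gX) = \rho^*(g)\Lip(X)$ for every $g$, and since $g \in \mathrm{Aut}(X)$ we have $gX = X$, hence $\rho^*(g)\Lip(X) = \Lip(X)$. Concretely, a poset automorphism permutes the minimal elements of $X$ among themselves and carries covering relations to covering relations, so it permutes the defining inequalities of $\Lip(X)$ listed in \Cref{def:lip}.

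Having verified these two hypotheses, \Cref{prop:alcoved} applies directly and yields all three assertions at once: the induced $G$-action on $\TT_X$ is translative and preserves a balanced proper coloring; by \Cref{thm:EBM} the equivariant Ehrhart series of $\Lip(X)$ with respect to $\rho^*$ equals $\eqhilb{\rho^*}{\mathbb{C}[\usc{\TT_X}]}{t}$, the equivariant Hilbert series of the $\rho^*$-action on $\TT_X$; and all entries of the equivariant $h$-vector and equivariant flag $h$-vector are effective. I do not expect any genuine obstacle here: the argument is purely a matter of matching notation, and the only point requiring a little care is to keep track that ``$d$'' in \Cref{prop:alcoved} is $|X|$ and that the coordinates of the ambient space are indexed by $X$, so that the arrangement $\AA_{|X|}$ really is the one triangulating $\Lip(X)$.
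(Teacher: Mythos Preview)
Your proposal is correct and follows essentially the same route as the paper: verify that $\rho^*$ acts by coordinate permutations and preserves $\Lip(X)$ (via the preceding Remark), then invoke \Cref{prop:alcoved}, whose statement already packages the translativity, the equality of equivariant Ehrhart and Hilbert series, and the effectiveness of the equivariant $h$- and flag $h$-vectors. The paper's proof is terser but identical in substance, additionally noting unimodularity of $\TT_X$ and citing \Cref{thm:EBM} explicitly, which is already folded into the conclusion of \Cref{prop:alcoved}.
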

\begin{proof} 

As discussed above, the representation $\rho^*$ preserves the polytope $\Lip(X)$. Since $\rho^*$ permutes the coordinates of $\mathbb R^X$, by \Cref{prop:alcoved} it also acts on the triangulation $\TT_X$, and preserves a balanced coloring. 
It is well-known that $\TT_X$ is unimodular (see also the end of \Cref{twoalcoves}), thus \Cref{thm:EBM} applies.
\end{proof}

\begin{example}\label{ex:lips}
As an example we compute the equivariant Ehrhart series of the Lipschitz polytope of \cite[Example 2.6]{SaSt}. Consider, for a fixed $n>0$, the poset $T_n$ with $n$ elements of rank $0$ (labeled $1$ through $n$) and one element of rank one (that we label by $n+1$) that is comparable with every other element. Note that this labeling is natural in the usual poset-theoretic sense.

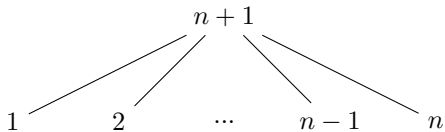
\begin{figure}[h!]
\begin{center} 
\begin{tikzpicture}[x=2em, y=4em]
\node (up) at (0,1) {$n+1$};
\node[anchor=center] (1) at (-4,0) {$1$};
\node[anchor=center] (2) at (-2,0) {$2$};
\node[anchor=center] (pts) at (0,0) {...};
\node[anchor=center] (nu) at (2,0) {$n-1$};
\node[anchor=center] (n) at (4,0) {$n$};
\draw (up) -- (1);
\draw (up) -- (2);
\draw (up) -- (nu);
\draw (up) -- (n);
\end{tikzpicture}
\end{center}
\caption{The poset $T_n$.}
\end{figure}

Let $G$ be the group of automorphisms of $T_n$, i.e., $G$ is the symmetric group $S_n$ acting by permutation of the elements $1,\ldots,n$ of rank $0$. Sanyal and Stump describe the maximal simplices of the unimodular triangulation by alcoves as follows. Let $\tau\in S_{n+1}$ be any permutation of the set of elements of $T_n$. We call $\tau$ \defil{descent-compatible} if, for every chain $a\lessdot b$ in $T_n$, the number of descents of the word $\tau(a)\tau(b)$ only depends on $b$: we call $d_{\tau}(b)$ this number. In our situation this means that either (1) $\tau(n+1)>\tau(i)$ for all $i\leq n$ or (2) $\tau(n+1)<\tau(i)$ for all $i\leq n$. We conclude that $\tau$ is descent-compatible if and only it is of one of two types.
\begin{itemize}
\item Type 1: $\tau(n+1)=n+1$. Here $d_\tau(i) = 0$ for all $i\in [n+1]$. 
\item Type 2: $\tau(n+1)=1$. Here $d_\tau (i) = 0$ when $i\leq n$ and $d_\tau(n+1) = 1$.
\end{itemize}

Given a descent-compatible permutation $\tau$, consider a point $q_{\tau}\in \mathbb R^{n+1}=\mathbb R^{T_n}$ defined by $q_i:= d_\tau(i)$. Then 
$$
q_\tau = (0,\ldots,0) \textrm{ if }\tau\textrm{ is of type (1)},\quad
q_\tau = (0,\ldots,0,1) \textrm{ if }\tau\textrm{ is of type (2)}.
$$
By \cite[Theorem 3.1]{SaSt} the family of simplices
\def\tinv{\tau^{-1}}
$$
\sigma_\tau := q_\tau + \{x\in \mathbb R^{n+1} \mid 0\leq x_{\tinv(1)}\leq \ldots \leq x_{\tinv(n+1)} \leq 1\}
$$
is the set of maximal cells of the unimodular triangulation $\TT_{T_n}$ of $\Lip(T_n)$. With what we said before, we have again two types of such simplices, of which we can describe the vertices explicitly (we will set $v_I:=\sum_{i\in I} e_i$ for $I\subseteq [n+1]$).
\begin{itemize}
\item Type 1: $\sigma_\tau = \{x\in \mathbb R^{n+1} \mid 0\leq x_{\tinv(1)}\leq \ldots \leq x_{n+1} \leq 1\} $, with vertices given by all points $v_{\emptyset}, v_{\{n+1\}}, v_{I_2} \ldots v_{I_{n+1}}$ where $\emptyset \subset \{n+1\} \subset I_2 \subset \ldots \subset I_{n+1}=[n+1]$ is a maximal chain in the boolean poset $B_{n+1}$ passing through $\{n+1\}$.
\item Type 2: $\sigma_\tau = e_{n+1} + \{x\in \mathbb R^{n+1} \mid 0\leq x_{n+1}\leq x_{\tinv(2)} \leq \ldots \leq x_{\tinv(n+1)} \leq 1\} $, with vertices given by all points $e_{n+1}+v_{I_0}, \ldots, e_{n+1}+v_{I_n}$ where $\emptyset=I_0\subset \ldots \subset I_{n+1}=[n+1]$ is a maximal chain in the boolean poset $B_{n+1}$ with $n+1\in I_i$ only if $i=n+1$.
\end{itemize}

Notice that every maximal simplex is of the form
$$
\Delta_{I_*,x}:=\conv\{e_{n+1}+v_{I_0}, \ldots, e_{n+1}+v_{I_n}\}\cup \{x\} 
$$
\def\ta{a} \def\tz{z}
where $I_*: \emptyset =I_0 \subsetneq I_1\subsetneq \ldots \subsetneq I_{n}=[n]$ is a maximal chain in $B_n$ and $x$ is either the point $\ta:=(0,\ldots,0)$ or the point $\tz:=v_{[n+1]}+e_{n+1}$.

This simplicial complex is isomorphic to the order complex of the poset $Y$ obtained from $B_n$ by adding two elements $\ta$, $\tz$ that we declare to be incomparable with each other but strictly greater than every element of $B_n$ (see \Cref{figX}).

\begin{figure}
\begin{center}
\begin{tikzpicture}
    \node (alpha) at (-1,2.5) {$\ta$};
    \node (omega) at (1,2.5) {$\tz$};   
    \draw (0,1.5) edge[out=200,in=90] (-2,0);
    \draw (-2,0) edge[out=270,in=160] (0,-1.5);
    \draw (0,1.5) edge[out=340,in=90] (2,0);
    \draw (2,0) edge[out=270,in=20] (0,-1.5);
    \node (B) at (0,0) {$B_n$};
    \draw (alpha) -- (0,1.5);
    \draw (omega) -- (0,1.5);    
\end{tikzpicture}
\end{center}
\caption{The poset $Y$ for \Cref{ex:lips}.}\label{figX}
\end{figure}

Now recall the group $G=S_n$ of permutations of $[n]\subseteq T_n$ and its action $\rho^*$ on $\mathbb R^{T_n}$ given by $\rho^*(g)(f)=f\circ g^{-1}$. Notice that $\ta$, $\tz$ as well as $e_{n+1}$ are fixed by every group element. Moreover, for every $I\subseteq [n]$ we have
$
\rho^*(g)(v_I)= v_{g(I)}
$ and so $\rho^*(g)\Delta_{I_*,x}=\Delta_{g(I_*),x}$.

This action corresponds to the natural action of $G$ on $Y$ that fixes $a$ and $z$ and is such that any $g\in G$ sends every $I\subseteq [n]$ to $gI$. Now, for every $g\in G$ the fixed complex $\Delta(Y)^g$ is the suspension (at $\ta$ and $\tz$) of $\Delta(B_n)^g$; therefore, the equivariant Hilbert series of $\Delta(Y)$ evaluated at $g$ is
$$
\frac{1+t}{1-t}\hilb{\mathbb{C}[\Delta(B_n)^g]}{t}.
$$
We obtain
\begin{equation} \label{eq:Lipschitz example}
\eqehr{\rho^*}{\Lip(T_n)}{t}=\frac{1+t}{1-t}\eqhilb{\pi}{\mathbb{C}[\Delta(B_n)]}{t},
\end{equation}
where 
$\pi$ is the canonical permutation action of $S_n$ on $B_n$. The computation of the right-hand side of \eqref{eq:Lipschitz example} can now be completed by recalling \Cref{ex_boolean}. 

\end{example}

\bibliographystyle{alpha} 
\bibliography{bibliography}

\begin{thebibliography}{ASVM21}

\bibitem[AR23]{AdamsReiner}
Ashleigh Adams and Victor Reiner.
\newblock A colorful {H}ochster formula and universal parameters for face
  rings.
\newblock {\em J. Commut. Algebra}, 15(2):151--176, 2023.

\bibitem[ASVM20]{ASuV}
Federico Ardila, Mariel Supina, and Andr\'{e}s~R. Vindas-Mel\'{e}ndez.
\newblock The equivariant {E}hrhart theory of the permutahedron.
\newblock {\em Proc. Amer. Math. Soc.}, 148(12):5091--5107, 2020.

\bibitem[ASVM21]{AScV}
Federico Ardila, Anna Schindler, and Andr\'{e}s~R. Vindas-Mel\'{e}ndez.
\newblock The equivariant volumes of the permutahedron.
\newblock {\em Discrete Comput. Geom.}, 65(3):618--635, 2021.

\bibitem[BB79]{BaBj}
Kenneth Baclawski and Anders Bj{\"o}rner.
\newblock Fixed points in partially ordered sets.
\newblock {\em Adv. Math.}, 31:263--287, 1979.

\bibitem[BD24]{BibbyDelucchi}
Christin Bibby and Emanuele Delucchi.
\newblock Supersolvable posets and fiber-type abelian arrangements.
\newblock {\em Selecta Math. (N.S.)}, 30(5):Paper No. 89, 39, 2024.

\bibitem[BH99]{BriHae}
Martin~R. Bridson and Andr{\'e} Haefliger.
\newblock {\em Metric spaces of non-positive curvature}, volume 319 of {\em
  Grundlehren Math. Wiss.}
\newblock Berlin: Springer, 1999.

\bibitem[Bj{\"o}95]{TopologicalMethods}
A.~Bj{\"o}rner.
\newblock Topological methods.
\newblock In {\em Handbook of combinatorics, {V}ol. 1, 2}, pages 1819--1872.
  Elsevier Sci. B. V., Amsterdam, 1995.

\bibitem[BM85]{BetkeMcMullen}
U.~Betke and P.~McMullen.
\newblock Lattice points in lattice polytopes.
\newblock {\em Monatsh. Math.}, 99(4):253--265, 1985.

\bibitem[Bou02]{Bourbaki}
Nicolas Bourbaki.
\newblock {\em Lie groups and {L}ie algebras. {C}hapters 4--6}.
\newblock Elements of Mathematics (Berlin). Springer-Verlag, Berlin, 2002.
\newblock Translated from the 1968 French original by Andrew Pressley.

\bibitem[BR15]{BeckRobins}
Matthias Beck and Sinai Robins.
\newblock {\em Computing the continuous discretely}.
\newblock Undergraduate Texts in Mathematics. Springer, New York, second
  edition, 2015.
\newblock Integer-point enumeration in polyhedra, With illustrations by David
  Austin.

\bibitem[CF67]{CookeFinney}
George~E. Cooke and Ross~L. Finney.
\newblock {\em Homology of cell complexes}.
\newblock Princeton University Press, Princeton, NJ; University of Tokyo Press,
  Tokyo, 1967.
\newblock Based on lectures by Norman E. Steenrod.

\bibitem[CHK23]{CHK}
Oliver Clarke, Akihiro Higashitani, and Max K\"{o}lbl.
\newblock The equivariant {E}hrhart theory of polytopes with order-two
  symmetries.
\newblock {\em Proc. Amer. Math. Soc.}, 151(9):4027--4041, 2023.

\bibitem[CR06]{CurtisReiner}
Charles~W. Curtis and Irving Reiner.
\newblock {\em Representation theory of finite groups and associative
  algebras.}
\newblock Providence, RI: AMS Chelsea Publishing, reprint of the 1962 original
  edition, 2006.

\bibitem[DD21]{DaliDelucchi}
Alessio D'Al\`{i} and Emanuele Delucchi.
\newblock Stanley-{R}eisner rings for symmetric simplicial complexes,
  {$G$}-semimatroids and {A}belian arrangements.
\newblock {\em J. Comb. Algebra}, 5(3):185--236, 2021.

\bibitem[DLRS10]{DRS}
Jes\'{u}s~A. De~Loera, J\"{o}rg Rambau, and Francisco Santos.
\newblock {\em Triangulations}, volume~25 of {\em Algorithms and Computation in
  Mathematics}.
\newblock Springer-Verlag, Berlin, 2010.
\newblock Structures for algorithms and applications.

\bibitem[DR18]{DelucchiRiedel}
Emanuele Delucchi and Sonja Riedel.
\newblock Group actions on semimatroids.
\newblock {\em Adv. in Appl. Math.}, 95:199--270, 2018.

\bibitem[EKS24]{EKS}
Sophia Elia, Donghyun Kim, and Mariel Supina.
\newblock Techniques in equivariant {E}hrhart theory.
\newblock {\em Ann. Comb.}, 28(3):819--870, 2024.

\bibitem[GS84]{GarsiaStanton}
A.~M. Garsia and D.~Stanton.
\newblock Group actions of {S}tanley-{R}eisner rings and invariants of
  permutation groups.
\newblock {\em Adv. in Math.}, 51(2):107--201, 1984.

\bibitem[Hum90]{Humphreys}
James~E. Humphreys.
\newblock {\em Reflection groups and {C}oxeter groups}, volume~29 of {\em
  Cambridge Studies in Advanced Mathematics}.
\newblock Cambridge University Press, Cambridge, 1990.

\bibitem[LP07]{LaPo}
Thomas Lam and Alexander Postnikov.
\newblock Alcoved polytopes. {I}.
\newblock {\em Discrete Comput. Geom.}, 38(3):453--478, 2007.

\bibitem[LP18]{LaPo2}
Thomas Lam and Alexander Postnikov.
\newblock Alcoved polytopes. {II}.
\newblock In {\em Lie groups, geometry, and representation theory. A tribute to
  the life and work of Bertram Kostant}, pages 253--272. Cham: Birkh{\"a}user,
  2018.

\bibitem[Pro77]{Provan-decompositions}
John~Scott Provan.
\newblock {\em Decompositions, shellings, and diameters of simplicial complexes
  and convex polyhedra}.
\newblock ProQuest LLC, Ann Arbor, MI, 1977.
\newblock Thesis (Ph.D.)--Cornell University.

\bibitem[SS18]{SaSt}
Raman Sanyal and Christian Stump.
\newblock Lipschitz polytopes of posets and permutation statistics.
\newblock {\em J. Comb. Theory, Ser. A}, 158:605--620, 2018.

\bibitem[Sta79]{StanleyBalanced}
Richard~P. Stanley.
\newblock Balanced {C}ohen-{M}acaulay complexes.
\newblock {\em Trans. Amer. Math. Soc.}, 249(1):139--157, 1979.

\bibitem[Sta82]{StanleyActions}
Richard~P. Stanley.
\newblock Some aspects of groups acting on finite posets.
\newblock {\em J. Combin. Theory Ser. A}, 32(2):132--161, 1982.

\bibitem[Sta86]{StanleyTwo}
Richard~P. Stanley.
\newblock Two poset polytopes.
\newblock {\em Discrete Comput. Geom.}, 1(1):9--23, 1986.

\bibitem[Sta96]{StanleyCombCommAlg}
Richard~P. Stanley.
\newblock {\em Combinatorics and commutative algebra}, volume~41 of {\em
  Progress in Mathematics}.
\newblock Birkh\"{a}user Boston, Inc., Boston, MA, second edition, 1996.

\bibitem[Sta11]{Stapledon}
Alan Stapledon.
\newblock Equivariant {E}hrhart theory.
\newblock {\em Adv. Math.}, 226(4):3622--3654, 2011.

\bibitem[Sta12]{StanleyEnumerative}
Richard~P. Stanley.
\newblock {\em Enumerative combinatorics. {V}olume 1}, volume~49 of {\em
  Cambridge Studies in Advanced Mathematics}.
\newblock Cambridge University Press, Cambridge, second edition, 2012.

\bibitem[Sta23]{StapledonNew}
Alan Stapledon.
\newblock Equivariant {E}hrhart theory, commutative algebra and invariant
  triangulations of polytopes.
\newblock {\em arXiv preprint arXiv:2311.17273}, 2023.

\bibitem[Ste94]{StembridgeWeyl}
John~R. Stembridge.
\newblock Some permutation representations of {W}eyl groups associated with the
  cohomology of toric varieties.
\newblock {\em Adv. Math.}, 106(2):244--301, 1994.

\end{thebibliography}

\end{document}